\providecommand{\tabularnewline}{\\}
\def\RSthmtxt{theorem~}\newref{thm}{name = \RSthmtxt}}
\def\RSlemtxt{lemma~}\newref{lem}{name = \RSlemtxt}}
\numberwithin{equation}{section}
\numberwithin{figure}{section}
\numberwithin{table}{section}
\theoremstyle{plain}
\newtheorem{thm}{\protect\theoremname}[section]
\theoremstyle{plain}
\newtheorem{lem}[thm]{\protect\lemmaname}
\theoremstyle{definition}
\newtheorem{defn}[thm]{\protect\definitionname}
\theoremstyle{plain}
\newtheorem{cor}[thm]{\protect\corollaryname}
\theoremstyle{remark}
\newtheorem*{rem*}{\protect\remarkname}
\theoremstyle{remark}
\newtheorem{rem}[thm]{\protect\remarkname}
\theoremstyle{plain}
\newtheorem{question}[thm]{\protect\questionname}
\theoremstyle{definition}
\newtheorem{example}[thm]{\protect\examplename}
\theoremstyle{plain}
\newtheorem{prop}[thm]{\protect\propositionname}
\theoremstyle{definition}
\newtheorem*{problem*}{\protect\problemname}
\providecommand{\MR}[1]{}
\providecommand{\corollaryname}{Corollary}
\providecommand{\definitionname}{Definition}
\providecommand{\examplename}{Example}
\providecommand{\lemmaname}{Lemma}
\providecommand{\problemname}{Problem}
\providecommand{\propositionname}{Proposition}
\providecommand{\questionname}{Question}
\providecommand{\remarkname}{Remark}
\providecommand{\theoremname}{Theorem}
\begin{document}
\title{Positive Definite Kernels, Algorithms, Frames, and Approximations}
\begin{abstract}
The main purpose of our paper is a new approach to design of algorithms
of Kaczmarz type in the framework of operators in Hilbert space. Our
applications include a diverse list of optimization problems, new
Karhunen-Lo\`eve transforms, and Principal Component Analysis (PCA)
for digital images. A key feature of our algorithms is our use of
recursive systems of projection operators. Specifically, we apply
our recursive projection algorithms for new computations of PCA probabilities
and of variance data. For this we also make use of specific reproducing
kernel Hilbert spaces, factorization for kernels, and finite-dimensional
approximations. Our projection algorithms are designed with view to
maximum likelihood solutions, minimization of \textquotedblleft cost\textquotedblright{}
problems, identification of principal components, and data-dimension
reduction.
\end{abstract}

\author{Palle E.T. Jorgensen}
\address{(Palle E.T. Jorgensen) Department of Mathematics, The University of
Iowa, Iowa City, IA 52242-1419, U.S.A.}
\email{palle-jorgensen@uiowa.edu}
\author{Myung-Sin Song}
\address{(Myung-Sin Song) Department of Mathematics and Statistics, Southern
Illinois University Edwardsville, Edwardsville, IL 62026, USA}
\email{msong@siue.edu}
\author{James Tian}
\address{(James F. Tian) Mathematical Reviews, 416 4th Street Ann Arbor, MI
48103-4816, U.S.A.}
\email{jft@ams.org}
\keywords{Positive-definite kernels, Fourier analysis, probability, stochastic
processes, reproducing kernel Hilbert space, complex function-theory,
interpolation, signal/image processing, sampling, frames, moments,
machine learning, embedding problems, geometry, information theory,
optimization, algorithms, Kaczmarz, Karhunen--Lo\`eve, factorizations,
splines, Principal Component Analysis, dimension reduction, digital
image analysis, covariance matrix, Gaussian process.}
\subjclass[2000]{Primary: 47B32. Secondary: 41A15, 41A65, 42A82, 42C15, 46E22, 47A05,
47N10, 60G15, 62H25, 68T07, 90C20, 94A08, 94A12, 94A20.}

\maketitle
\tableofcontents{}

\section{Introduction}

While positive-definite (p.d.) kernels date back to the 1950ties,
recently they have found striking and new applications in applied
mathematics. Indeed, there are by now multiple and diverse applications,
most of quite recent vintage. In outline, a p.d. kernel is a generalization
of the notion of positive-definite function, or p.d. matrix. Initially,
p.d. kernels were introduced with view to the use for the solution
of integral operator equations arising in PDE theory (especially boundary
value problems), potential theory, and in optimization problems. Since
their inception, positive-definite kernels and their associated Hilbert
spaces, reproducing kernel Hilbert spaces (RKHSs) have come up in
entirely new areas of mathematics. A RKHS is a Hilbert space $\mathscr{H}$
of functions on a set, say $X$, with the property that for $f$ in
$\mathscr{H}$, $f(x)$ is continuous in the norm of $\mathscr{H}$.
RKHSs arise naturally in Fourier analysis, probability theory, stochastic
processes, operator theory, complex function-theory, interpolation,
signal/image processing, sampling, moment problems, machine learning,
embedding problem from geometry and analysis, and information theory,
among other areas. Readers are referred to \cite{MR3796644,MR3796634,MR3711878,MR1898684,HERR2018,MR3800275,MR3439812,MR2500924,MR3896982}.

In details, every p.d. kernel $K$ on a set $X$ has  an associated
RKHS $\mathscr{H}_{K}$. The reproducing axiom is as follows: For
all $x\in X$, the function $K\left(\cdot,x\right)$ on $X$ is in
$\mathscr{H}_{K}$, and 
\begin{equation}
F\left(x\right)=\left\langle K\left(\cdot,x\right),F\right\rangle _{\mathscr{H}_{K}}\label{eq:a6}
\end{equation}
holds for all $x\in X$ and all $\ensuremath{F\in\mathscr{H}_{K}}$.
Eq (\ref{eq:a6}) is referred to as \emph{the reproducing property}. 

A kernel $X\times X\xrightarrow{\;K\;}\mathbb{C}$ is called \emph{positive
definite} (p.d.) if $\forall n\in\mathbb{N}$, $\forall\left\{ \alpha_{i}\right\} _{1}^{n}$,
$\forall\left\{ x_{i}\right\} _{1}^{n}$, $\alpha_{i}\in\mathbb{C}$,
$x_{i}\in X$, we have 
\begin{equation}
\sum_{i}\sum_{j}\overline{\alpha}_{i}\alpha_{j}K\left(x_{i},x_{j}\right)\geq0.\label{eq:I3}
\end{equation}
Given $K$ p.d., a realization of the RKHS $\mathscr{H}_{K}$ is to
take the completion of the linear span of $\left\{ K\left(\cdot,x\right)\right\} _{x\in X}$
with respect to the norm
\begin{equation}
\left\Vert \sum\alpha_{i}K\left(\cdot,x_{i}\right)\right\Vert _{\mathscr{H}_{K}}^{2}\coloneqq\sum_{i}\sum_{j}\overline{\alpha}_{i}\alpha_{j}K\left(x_{i},x_{j}\right).
\end{equation}

\textbf{Organization.} The paper is organized as follows: in sections
\ref{sec:pdk} and \ref{sec:ofp}, we present systems $\left(K,\mu\right)$
in duality, where $K$ is a fixed p.d. kernel, and choices of $\mu$
runs through an associated family of measures. This duality will play
an important role, for example in a unified approach to (i) a class
of optimization problems, and to (ii) realization of p.d. kernels
as integral operators. In particular, we show how properties of $\mu$
reflect themselves in spectral theory for the corresponding integral
operator. \secref{rp} offers a framework for sampling. The starting
point is a p.d. kernel defined on $X\times X$ for a given set $X$.
The questions we address are: What countably discrete subsets $V$
of $X$ provide sets of sample points for suitable classes of functions
on $X$, and algorithms of Kaczmarz type, sect. \ref{sec:kac}. The
answer to this, and related, questions depends on configuration of
systems of finite subsets of $X$, addressed in section \ref{sec:fp};
with examples in sections \ref{sec:PCA}and \ref{sec:pw}. In section
\ref{sec:gp}, we then present a new harmonic analysis of general
Gaussian processes, with the use of our RKHS-analysis, and projection
based algorithms, from inside the paper.

Our projection approach to algorithms is motivated in part by principal
component analysis (PCA), and its applications in dimension reduction
and manifold learning, which in turn, extends earlier work on Monte
Carlo, and Karhunen--Lo\`eve analysis, also known as the Kosambi--Karhunen--Lo\`eve
approach. See, e.g., \cite{MR2459323,MR2362796,MR3780557,MR3922239,jorgensen2019dimension}.

\textbf{Background material. }Our paper is interdisciplinary, and
we have aimed for multiple target audiences. Inside our paper, we
have therefore added explanations for the benefit of readers from
neighboring areas. Specifically, background material is included for
such notions as Karhunen-Lo\`eve transforms, Principal Component
Analysis (PCA), Monte-Carlo simulations, and Paley-Wiener spaces.
Monte-Carlo refers to the use randomness in order to solve problems
that might be deterministic in principle. In rough outline, Karhunen-Lo\`eve
transforms, and PCAs, are algorithmic tools, used data analysis. Our
present emphasis is that of digital image algorithms. Such algorithms
serve to produce the best possible bases for image expansions. This
area is combined with Monte-Carlo simulations, i.e., random simulations
that make use of computer-generated sequences of independent, centered
real stochastic process (typically, Gaussian $N(0,1)$); as well as
related Paley-Wiener spaces. A key use of the Karhunen--Lo\`eve
theorem is algorithmic designs of canonical and orthogonal representations
of images and signals. Paley-Wiener spaces offer a useful framework
for such reconstruction algorithms.

\section{\label{sec:pdk}Positive definite kernels and measures}

The notion of \emph{positive definite} (p.d.) functions, usually called
kernels, adapts well to many diverse optimization problems, arising
for example in \emph{machine leaning}, where training data are chosen
to best accommodate features which serve as input in the \textquotedblleft learning.\textquotedblright{}
The notion of \emph{feature spaces} and \emph{feature maps} are defined
from specific Hilbert spaces (which we will call the feature space);
details below. For most application a suitable infinite-dimensional
framework is essential. So in the general case, if a p.d. kernel function
$K$ is given on a $X\times X$, where $X$ is a set, then there is
a rich variety of \emph{factorizations}. More precisely, this means
that there are many choices of feature maps from $X$ into some Hilbert
space (a choice of \emph{feature space}) which yield back $K$ in
a factorization. Among the feature spaces, the RKHS $\mathscr{H}_{K}$
associated with $K$ is universal (minimal) in the sense that $\mathscr{H}_{K}$
is isometrically \textquotedblleft included\textquotedblright{} in
any of the feature spaces which arise in factorizations for $K$.
For many optimization problems it will be helpful to adapt factorizations
in such a way that the feature Hilbert space is an $L^{2}$ space,
and this will be the present focus.

Motivated by related applications to the study of stochastic processes,
it is of special significance to focus on the cases when the family
of feature spaces may be chosen in the form $L^{2}\left(\mu\right)$.
This raises the question of which measures $\mu$ are right for a
particular kernel $K$ and its associated RKHS. The answer to this
depends on the particular application at hand. 

We begin with the correspondence between reproducing kernel Hilbert
spaces (RKHSs) and integral operators and their spectral resolutions. 

To make precise the notion of Borel measures, we must first introduce
a suitable sigma-algebra of Borel subsets of $X$. The idea is as
follows: Starting with a p.d. kernel $K$ on $X\times X$, we note
below that there is then an induced metric $d_{K}$ on $X$ which
makes $K$ jointly continuous as a function on $X\times X$. But of
course, $d_{K}$ also introduces a (metric) topology on $X$, and
therefore the associated sigma-algebra $\mathscr{B}_{K}$ generated
by the $d_{K}$ neighborhoods; see details below. For some purposes,
it may be convenient to pass to the metric completion of $\left(X,d_{K}\right)$.

Let $X\times X\xrightarrow{\;K\;}\mathbb{R}$ be a positive definite
(p.d.) kernel on a set $X$. Let $\mathscr{H}_{K}$ be the associated
RKHS with norm $\left\Vert \cdot\right\Vert _{\mathscr{H}_{K}}$.
Let $\mathcal{M}\left(X\right)$ be the set of all Borel measures
on $X$.
\begin{lem}
Given $X\times X\xrightarrow{\;K\;}\mathbb{R}$ as above, the set
$X$ can be equipped with a metric 
\begin{equation}
d_{K}\left(x,y\right)\coloneqq\sqrt{K\left(x,x\right)+K\left(y,y\right)-2K\left(x,y\right)},\;\forall x,y\in X.\label{eq:A3}
\end{equation}
\end{lem}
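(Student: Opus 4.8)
The plan is to realize $d_{K}$ as the distance induced on $X$ by the canonical feature map into the RKHS $\mathscr{H}_{K}$, and then to read the metric axioms off from the Hilbert-space norm. Let $\Phi\colon X\to\mathscr{H}_{K}$ be given by $\Phi(x)=K(\cdot,x)$; this is well defined by the reproducing axiom \eqref{eq:a6}. The first step is the inner-product identity $\left\langle \Phi(x),\Phi(y)\right\rangle _{\mathscr{H}_{K}}=K(x,y)$, which is immediate on applying \eqref{eq:a6} to the function $F=K(\cdot,x)$ at the point $y$, together with the symmetry $K(x,y)=K(y,x)$ (a consequence of \eqref{eq:I3}).

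Next I would expand $\left\Vert \Phi(x)-\Phi(y)\right\Vert _{\mathscr{H}_{K}}^{2}$ by bilinearity, obtaining $K(x,x)-2K(x,y)+K(y,y)$; in particular this quantity is $\geq0$, so the square root in \eqref{eq:A3} is legitimate and $d_{K}(x,y)=\left\Vert \Phi(x)-\Phi(y)\right\Vert _{\mathscr{H}_{K}}$. From this formula, symmetry of $d_{K}$ and $d_{K}(x,x)=0$ are immediate, and the triangle inequality $d_{K}(x,z)\leq d_{K}(x,y)+d_{K}(y,z)$ follows verbatim from the triangle inequality for $\left\Vert \cdot\right\Vert _{\mathscr{H}_{K}}$ applied to the decomposition $\Phi(x)-\Phi(z)=\bigl(\Phi(x)-\Phi(y)\bigr)+\bigl(\Phi(y)-\Phi(z)\bigr)$.

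The one delicate point --- and the only place where ``metric'' rather than ``pseudometric'' is at issue --- is the implication $d_{K}(x,y)=0\Rightarrow x=y$. Unwinding the identity above, $d_{K}(x,y)=0$ is equivalent to $K(\cdot,x)=K(\cdot,y)$ in $\mathscr{H}_{K}$, i.e.\ to $K(z,x)=K(z,y)$ for all $z\in X$. For a general p.d. kernel this need not force $x=y$; one then either imposes the mild nondegeneracy hypothesis that $K$ separates points in this sense (which holds for the kernels relevant here), or else passes to the quotient of $X$ by the relation $x\sim y\iff K(\cdot,x)=K(\cdot,y)$ and argues on $X/\!\!\sim$. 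I would make this convention explicit and then conclude that \eqref{eq:A3} defines a metric. As a by-product the map $x\mapsto K(\cdot,x)$ is an isometry of $(X,d_{K})$ into $\mathscr{H}_{K}$, which already yields the joint continuity of $K$ on $X\times X$ and underlies the Borel structure $\mathscr{B}_{K}$ discussed in the text.
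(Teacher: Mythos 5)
Your proposal is correct and follows the same route as the paper, which simply notes that $d_{K}(x,y)=\left\Vert K_{x}-K_{y}\right\Vert _{\mathscr{H}_{K}}$ with $K_{s}=K(\cdot,s)$ and reads the metric axioms off the Hilbert-space norm. Your extra remark about the separation axiom (that without a point-separating hypothesis on $K$ one only gets a pseudometric and must either assume nondegeneracy or pass to a quotient) is a genuine point of care that the paper's one-line proof leaves implicit.
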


\begin{proof}
One checks that $d_{K}\left(x,y\right)=\left\Vert K_{x}-K_{y}\right\Vert _{\mathscr{H}_{K}}$,
where $K_{s}=K\left(\cdot,s\right)$ for all $s\in X$. 
\end{proof}
\begin{lem}
The kernel $K$ is continuous on $X\times X$ with respect to the
product topology induced by $d_{K}\times d_{K}$. 
\end{lem}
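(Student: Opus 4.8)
The plan is to reduce the joint continuity of $K$ to the Cauchy--Schwarz inequality in $\mathscr{H}_{K}$, using the identifications $K\left(x,y\right)=\left\langle K_{x},K_{y}\right\rangle _{\mathscr{H}_{K}}$ and $d_{K}\left(x,y\right)=\left\Vert K_{x}-K_{y}\right\Vert _{\mathscr{H}_{K}}$ from the preceding lemma. Fix a base point $\left(x_{0},y_{0}\right)\in X\times X$. First I would write the difference as a telescoping sum,
\begin{equation}
K\left(x,y\right)-K\left(x_{0},y_{0}\right)=\left\langle K_{x}-K_{x_{0}},K_{y}\right\rangle _{\mathscr{H}_{K}}+\left\langle K_{x_{0}},K_{y}-K_{y_{0}}\right\rangle _{\mathscr{H}_{K}},\label{eq:cont1}
\end{equation}
and then apply Cauchy--Schwarz to each term to get
\begin{equation}
\left|K\left(x,y\right)-K\left(x_{0},y_{0}\right)\right|\leq d_{K}\left(x,x_{0}\right)\left\Vert K_{y}\right\Vert _{\mathscr{H}_{K}}+\left\Vert K_{x_{0}}\right\Vert _{\mathscr{H}_{K}}\,d_{K}\left(y,y_{0}\right).\label{eq:cont2}
\end{equation}

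The only point that needs care is that the factor $\left\Vert K_{y}\right\Vert _{\mathscr{H}_{K}}$ on the right of \eqref{eq:cont2} must be controlled uniformly for $y$ near $y_{0}$; this is the step I would flag as the (very mild) obstacle, since a priori $y\mapsto\left\Vert K_{y}\right\Vert _{\mathscr{H}_{K}}=\sqrt{K\left(y,y\right)}$ need not be bounded on all of $X$. But it is locally bounded: by the triangle inequality in $\mathscr{H}_{K}$, $\left\Vert K_{y}\right\Vert _{\mathscr{H}_{K}}\leq\left\Vert K_{y_{0}}\right\Vert _{\mathscr{H}_{K}}+d_{K}\left(y,y_{0}\right)$, so whenever $d_{K}\left(y,y_{0}\right)<1$ we have $\left\Vert K_{y}\right\Vert _{\mathscr{H}_{K}}\leq\left\Vert K_{y_{0}}\right\Vert _{\mathscr{H}_{K}}+1$.

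Combining this with \eqref{eq:cont2}, for all $\left(x,y\right)$ in the $d_{K}\times d_{K}$-ball of radius $1$ around $\left(x_{0},y_{0}\right)$ we obtain
\begin{equation}
\left|K\left(x,y\right)-K\left(x_{0},y_{0}\right)\right|\leq\left(\left\Vert K_{y_{0}}\right\Vert _{\mathscr{H}_{K}}+1\right)d_{K}\left(x,x_{0}\right)+\left\Vert K_{x_{0}}\right\Vert _{\mathscr{H}_{K}}\,d_{K}\left(y,y_{0}\right),\label{eq:cont3}
\end{equation}
and the right-hand side tends to $0$ as $\left(x,y\right)\to\left(x_{0},y_{0}\right)$ in the product topology. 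Since $\left(x_{0},y_{0}\right)$ was arbitrary, $K$ is continuous on $X\times X$; in fact \eqref{eq:cont3} shows it is locally Lipschitz with respect to $d_{K}\times d_{K}$. I would present the argument essentially in this form, remarking only that no completeness or separability hypothesis on $X$ is needed.
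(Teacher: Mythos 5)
Your argument is correct and is essentially the paper's proof: the same two-term splitting of $K\left(x,y\right)-K\left(x_{0},y_{0}\right)$ followed by Cauchy--Schwarz, with $d_{K}\left(x,y\right)=\left\Vert K_{x}-K_{y}\right\Vert _{\mathscr{H}_{K}}$. Your additional step controlling $\left\Vert K_{y}\right\Vert _{\mathscr{H}_{K}}$ near $y_{0}$ via the triangle inequality only makes explicit a local-boundedness point the paper leaves implicit, so no further changes are needed.
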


\begin{proof}
Indeed, for all $a,b,c,d\in X$, it holds that
\begin{align*}
\left|K\left(a,b\right)-K\left(c,d\right)\right| & =\left|\left\langle K_{a},K_{b}\right\rangle _{\mathscr{H}_{K}}-\left\langle K_{c},K_{d}\right\rangle _{\mathscr{H}_{K}}\right|\\
 & \leq\left|\left\langle K_{a},K_{b}-K_{d}\right\rangle _{\mathscr{H}_{K}}\right|+\left|\left\langle K_{a}-K_{c},K_{d}\right\rangle _{\mathscr{H}_{K}}\right|\\
 & \leq\left\Vert K_{a}\right\Vert _{\mathscr{H}_{K}}\left\Vert K_{b}-K_{d}\right\Vert _{\mathscr{H}_{K}}+\left\Vert K_{d}\right\Vert _{\mathscr{H}_{K}}\left\Vert K_{a}-K_{c}\right\Vert _{\mathscr{H}_{K}},
\end{align*}
so the assertion follows. 
\end{proof}
\begin{defn}
\label{def:reg}Let $\mu\in\mathcal{M}\left(X\right)$, i.e., a Borel
measure on $\left(X,\mathscr{F}\right)$. The measure $\mu$ is said
to be in $Reg\left(K\right)$ if 
\begin{equation}
K_{x}\coloneqq K\left(\cdot,x\right)\in L^{2}\left(X,\mathscr{F},\mu\right),\;\forall x\in X.\label{eq:A2}
\end{equation}
\end{defn}

If (\ref{eq:A2}) is assumed to hold for some $\sigma$-finite measure
$\mu$ on $\left(X,\mathscr{F}\right)$ and $\mathscr{F}=\mathscr{B}_{K}$,
the Borel $\sigma$-algebra defined from the open sets with respect
to the metric $d_{K}$, then we get a well defined linear operator
$T_{\mu}:\mathscr{H}_{K}\rightarrow L^{2}\left(\mu\right)$, by 
\begin{equation}
T_{\mu}\left(\sum\nolimits _{j}c_{j}K\left(\cdot,x_{j}\right)\right)\coloneqq\sum\nolimits _{j}c_{j}K\left(\cdot,x_{j}\right)\label{eq:A4}
\end{equation}
where $c_{j}\in\mathbb{R}$, $x_{j}\in X$ with a finite sum. However,
the operator $T_{\mu}$ may be unbounded. It is always closable (\lemref{dense}).
We now turn to the adjoint operator $T_{\mu}^{*}:L^{2}\left(\mu\right)\rightarrow\mathscr{H}_{K}$. 
\begin{lem}
\label{lem:Tadj}Fix $K$, and $\mu\in Reg\left(K\right)$ (see \defref{reg}).
Let $T_{\mu}$ be as in (\ref{eq:A4}), then 
\begin{equation}
\left(T_{\mu}^{*}\varphi\right)\left(x\right)=\int_{X}K\left(y,x\right)\varphi\left(y\right)d\mu\left(y\right),\;\forall\varphi\in dom\left(T_{\mu}^{*}\right).\label{eq:A5}
\end{equation}
\end{lem}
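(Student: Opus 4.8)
The plan is to extract formula (\ref{eq:A5}) directly from the two structural facts already available: the definition of the Hilbert-space adjoint together with the reproducing property (\ref{eq:a6}) of $\mathscr{H}_{K}$. Since $T_{\mu}$ is densely defined (by \lemref{dense}), its adjoint $T_{\mu}^{*}$ is a well-defined (closed) operator, and for every $\varphi\in dom\left(T_{\mu}^{*}\right)$ the vector $T_{\mu}^{*}\varphi$ is the unique element of $\mathscr{H}_{K}$ satisfying
\[
\left\langle T_{\mu}f,\varphi\right\rangle _{L^{2}\left(\mu\right)}=\left\langle f,T_{\mu}^{*}\varphi\right\rangle _{\mathscr{H}_{K}},\qquad\forall f\in dom\left(T_{\mu}\right).
\]
Because $\mathscr{H}_{K}$ is a space of functions on $X$, the vector $T_{\mu}^{*}\varphi$ is an honest function, so it is legitimate to evaluate it at a point $x\in X$ and to hope for a pointwise integral formula.

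The key step is to feed the reproducing vector $f=K_{x}=K\left(\cdot,x\right)$ into the adjoint relation. This $f$ lies in $dom\left(T_{\mu}\right)$, being a one-term finite linear combination of the generators $K\left(\cdot,x_{j}\right)$ appearing in (\ref{eq:A4}); and by the very definition (\ref{eq:A4}) we have $T_{\mu}K_{x}=K_{x}$, now regarded as an element of $L^{2}\left(\mu\right)$ — this is precisely where the hypothesis $\mu\in Reg\left(K\right)$ enters, guaranteeing $K_{x}\in L^{2}\left(\mu\right)$. Then the reproducing property (\ref{eq:a6}) on the $\mathscr{H}_{K}$-side and an unwinding of the $L^{2}\left(\mu\right)$-inner product on the other side give
\[
\left(T_{\mu}^{*}\varphi\right)\left(x\right)=\left\langle K_{x},T_{\mu}^{*}\varphi\right\rangle _{\mathscr{H}_{K}}=\left\langle T_{\mu}K_{x},\varphi\right\rangle _{L^{2}\left(\mu\right)}=\left\langle K_{x},\varphi\right\rangle _{L^{2}\left(\mu\right)}=\int_{X}K\left(y,x\right)\varphi\left(y\right)d\mu\left(y\right),
\]
using that $K$ is real-valued and symmetric so that $\overline{K_{x}\left(y\right)}=K\left(y,x\right)$. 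Since $x\in X$ was arbitrary, this is the asserted identity of functions in $\mathscr{H}_{K}$.

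I do not expect a serious obstacle here; the points that deserve a word of care are (i) verifying that $K_{x}\in dom\left(T_{\mu}\right)$ and that $T_{\mu}$ carries it to the same function viewed in $L^{2}\left(\mu\right)$, and (ii) being consistent with the conjugation conventions in the $\mathscr{H}_{K}$- and $L^{2}\left(\mu\right)$-inner products, so that no stray complex conjugate lands on $\varphi$ — in the real-kernel setting of the lemma this is harmless, but it should be noted. One may also observe, reading the computation in reverse, that a function $\varphi\in L^{2}\left(\mu\right)$ belongs to $dom\left(T_{\mu}^{*}\right)$ exactly when $x\mapsto\int_{X}K\left(y,x\right)\varphi\left(y\right)d\mu\left(y\right)$ defines an element of $\mathscr{H}_{K}$; this refinement is not needed for the statement but is worth recording for later use.
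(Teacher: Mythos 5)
Your proof is correct and follows essentially the same route as the paper: both test the adjoint relation against the reproducing vectors $K\left(\cdot,x\right)$, use $T_{\mu}K\left(\cdot,x\right)=K\left(\cdot,x\right)$ in $L^{2}\left(\mu\right)$ (which is where $\mu\in Reg\left(K\right)$ enters), and invoke the reproducing property to read off the pointwise formula. Your extra remarks on domains and conjugation conventions are sound but not a departure from the paper's argument.
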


\begin{proof}
For all $x\in X$, and all $\varphi\in dom\left(T_{\mu}^{*}\right)$,
\[
\left\langle T_{\mu}^{*}\varphi,K\left(\cdot,x\right)\right\rangle _{\mathscr{H}_{K}}=\big\langle\varphi,\underset{=K\left(\cdot,x\right)}{\underbrace{T_{\mu}\left(K\left(\cdot,x\right)\right)}}\big\rangle_{L^{2}\left(\mu\right)}=\int_{X}\varphi\left(y\right)K\left(y,x\right)\mu\left(dy\right).
\]
\end{proof}
\begin{lem}
\label{lem:dense}Let $T_{\mu}^{*}$ be as above. Then $\varphi\in dom\left(T_{\mu}^{*}\right)\subset L^{2}\left(\mu\right)$
if and only if 
\begin{gather}
\int_{X}\int_{X}\varphi\left(y\right)\varphi\left(z\right)K\left(y,z\right)\mu\left(dy\right)\mu\left(dz\right)<\infty.\label{eq:A9}
\end{gather}

Moreover, $dom\left(T_{\mu}^{*}\right)$ is dense in $L^{2}\left(\mu\right)$,
and so $T$ is closable. 
\end{lem}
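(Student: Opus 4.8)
\emph{Reduction.} The plan is to identify $dom(T_{\mu}^{*})$ via Riesz' representation theorem applied to a single densely defined functional, to recognize the double integral in (\ref{eq:A9}) as $\|T_{\mu}^{*}\varphi\|_{\mathscr{H}_{K}}^{2}$, and then to obtain density of $dom(T_{\mu}^{*})$ as a consequence of closability of $T_{\mu}$. Concretely, fix $\varphi\in L^{2}(\mu)$ and, for $f=\sum_{j}c_{j}K(\cdot,x_{j})\in dom(T_{\mu})$, put $L_{\varphi}(f):=\langle T_{\mu}f,\varphi\rangle_{L^{2}(\mu)}=\int_{X}f(y)\varphi(y)\,d\mu(y)$; this is a well-defined linear functional on the dense subspace $dom(T_{\mu})\subset\mathscr{H}_{K}$, and by the definition of the adjoint, $\varphi\in dom(T_{\mu}^{*})$ if and only if $L_{\varphi}$ is $\|\cdot\|_{\mathscr{H}_{K}}$-bounded. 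Evaluating at $f=K(\cdot,x)$ gives $L_{\varphi}(K(\cdot,x))=\int_{X}K(y,x)\varphi(y)\,d\mu(y)=:g(x)$, finite for every $x$ by Cauchy--Schwarz in $L^{2}(\mu)$ since $K(\cdot,x)\in L^{2}(\mu)$ (see \defref{reg}); by linearity $L_{\varphi}\big(\sum_{j}c_{j}K(\cdot,x_{j})\big)=\sum_{j}c_{j}g(x_{j})$. By the standard criterion for membership in a reproducing kernel Hilbert space, together with Riesz' lemma and the reproducing property (which forces the representing vector to be $g$ itself, consistently with \lemref{Tadj}), $L_{\varphi}$ is bounded if and only if $g\in\mathscr{H}_{K}$, and then $T_{\mu}^{*}\varphi=g$. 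It thus remains to equate $g\in\mathscr{H}_{K}$ with finiteness of (\ref{eq:A9}).

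\emph{The equivalence with (\ref{eq:A9}).} Write $D:=\int_{X}\int_{X}\varphi(y)\varphi(z)K(y,z)\,d\mu(y)\,d\mu(z)$. Extending the identity of \lemref{Tadj} from $\{K(\cdot,x)\}$ to all of $\mathscr{H}_{K}$ by density, one has, for $\varphi\in dom(T_{\mu}^{*})$, that $T_{\mu}^{*}\varphi=\int_{X}K(\cdot,y)\varphi(y)\,d\mu(y)$ as a weak ($\mathscr{H}_{K}$-valued) integral. Pairing this with $T_{\mu}^{*}\varphi=g$ and moving $\langle\cdot,\cdot\rangle_{\mathscr{H}_{K}}$ inside the integral, the reproducing property replaces $\langle g,K(\cdot,y)\rangle_{\mathscr{H}_{K}}$ by $g(y)$, whence
\[
\|T_{\mu}^{*}\varphi\|_{\mathscr{H}_{K}}^{2}=\int_{X}g(y)\varphi(y)\,d\mu(y)=\int_{X}\Big(\int_{X}K(z,y)\varphi(z)\,d\mu(z)\Big)\varphi(y)\,d\mu(y)=D,
\]
so $D<\infty$. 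Conversely, assume $D<\infty$; I would bound $L_{\varphi}$ by a Cauchy--Schwarz argument through positive-definiteness of $K$. For $f=\sum_{j}c_{j}K(\cdot,x_{j})$, apply positive-definiteness to signed measures of the form $\varphi\,d\mu+s\sum_{j}c_{j}\delta_{x_{j}}$ (truncated to sets of finite $\mu$-measure and then passed to the limit): this makes $s\mapsto D+2s\,L_{\varphi}(f)+s^{2}\|f\|_{\mathscr{H}_{K}}^{2}$ a nonnegative quadratic in $s$, whose nonpositive discriminant yields $|L_{\varphi}(f)|^{2}\le D\,\|f\|_{\mathscr{H}_{K}}^{2}$. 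Hence $L_{\varphi}$ is bounded, $\varphi\in dom(T_{\mu}^{*})$, and the equivalence is proved---with the additional identity $\|T_{\mu}^{*}\varphi\|_{\mathscr{H}_{K}}^{2}=D$.

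\emph{Density and closability.} I would establish closability of $T_{\mu}$ directly; density of $dom(T_{\mu}^{*})$ then follows from the standard fact that a densely defined operator is closable if and only if its adjoint is densely defined (with $\overline{T_{\mu}}=T_{\mu}^{**}$). Suppose $f_{n}\in dom(T_{\mu})$, $f_{n}\to0$ in $\mathscr{H}_{K}$, and $T_{\mu}f_{n}\to\psi$ in $L^{2}(\mu)$. Since $\mathscr{H}_{K}$ is a reproducing kernel Hilbert space, $|f_{n}(x)|=|\langle K(\cdot,x),f_{n}\rangle_{\mathscr{H}_{K}}|\le\sqrt{K(x,x)}\,\|f_{n}\|_{\mathscr{H}_{K}}\to0$ for every $x$, so $f_{n}\to0$ pointwise on $X$; but $T_{\mu}f_{n}$ is, as a function, just $f_{n}$, while some subsequence of $T_{\mu}f_{n}$ converges to $\psi$ $\mu$-a.e., so $\psi=0$ in $L^{2}(\mu)$. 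Hence $T_{\mu}$ is closable, and $dom(T_{\mu}^{*})$ is dense.

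\emph{Main obstacle.} The technical heart is the equivalence step: making rigorous the interchange of $\langle\cdot,\cdot\rangle_{\mathscr{H}_{K}}$ with the $\mu$-integral (equivalently, the weak-integral representation of $T_{\mu}^{*}\varphi$ past the linear span of $\{K(\cdot,x)\}$), and fixing the precise sense of the iterated integral in (\ref{eq:A9})---namely $\int_{X}g\varphi\,d\mu$ with $g(z)=\int_{X}K(y,z)\varphi(y)\,d\mu(y)$, which must be shown to converge and to be $\ge0$ rather than merely conditionally defined. A clean route for the converse direction is a truncation argument: with $E_{n}\uparrow X$ chosen so that $\mu(E_{n})<\infty$ and $|\varphi|\le n$, $K(y,y)\le n$ on $E_{n}$, each $\varphi_{n}:=\varphi\,\mathbf{1}_{E_{n}}$ lies in $dom(T_{\mu}^{*})$ with $\|T_{\mu}^{*}\varphi_{n}\|_{\mathscr{H}_{K}}^{2}=\int_{X}\int_{X}K(y,z)\varphi_{n}(y)\varphi_{n}(z)\,d\mu(y)\,d\mu(z)$ a genuine Bochner integral, and one then lets $n\to\infty$ by dominated convergence. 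The reduction and the closability argument are routine.
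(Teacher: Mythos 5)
Your proposal is correct in substance, and it is in places more detailed than the paper's own argument, which is quite terse: the paper simply asserts that finiteness of (\ref{eq:A9}) makes $\int_X K(y,\cdot)\varphi(y)\,\mu(dy)$ an element of $\mathscr{H}_K$, computes $\left\Vert T_\mu^*\varphi\right\Vert_{\mathscr{H}_K}^2$ as the double integral for the converse (exactly your ``pair the weak integral with itself'' step), and gives no proof at all of the density/closability assertion. You share the paper's norm computation for the direction $\varphi\in dom(T_\mu^*)\Rightarrow(\ref{eq:A9})$, but you differ in two genuine ways. First, for the direction the paper only asserts, you supply an argument: boundedness of $L_\varphi$ on $span\{K(\cdot,x)\}$ via the nonnegative quadratic $s\mapsto D+2sL_\varphi(f)+s^2\|f\|_{\mathscr{H}_K}^2$ and its discriminant, giving $|L_\varphi(f)|^2\le D\|f\|_{\mathscr{H}_K}^2$, with Riesz then producing $T_\mu^*\varphi=g$; this is a standard and clean route, and your alternative truncation-plus-closedness-of-$T_\mu^*$ route works as well. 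Second, you reverse the paper's logic on the last claim: the paper's statement reads ``$dom(T_\mu^*)$ dense, and so $T$ closable,'' whereas you prove closability directly (norm convergence in $\mathscr{H}_K$ gives pointwise convergence via the reproducing property, $T_\mu$ acts as the inclusion, and an a.e.-convergent subsequence forces the $L^2$-limit to vanish) and then invoke the theorem that a densely defined operator is closable iff its adjoint is densely defined. That is valid and actually fills a gap the paper leaves open. The one caution is the measure-theoretic one you already flag: the nonnegativity of the quadratic form for the mixed measure, the interchange of $\langle\cdot,\cdot\rangle_{\mathscr{H}_K}$ with $\int\!d\mu$, and the dominated-convergence passage all tacitly require the double integral in (\ref{eq:A9}) to be absolutely convergent (or at least unambiguously defined); the paper makes the same tacit assumption, so this does not put you below its level of rigor, but a finished write-up should fix the interpretation of (\ref{eq:A9}) once and use it consistently.
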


\begin{proof}
Let $\varphi\in L^{2}\left(\mu\right)$. If (\ref{eq:A9}) holds,
then $\int_{X}K\left(y,\cdot\right)\varphi\left(y\right)\mu\left(dy\right)$
belongs to $\mathscr{H}_{K}$, and so $T_{\mu}^{*}\varphi$ is well-defined,
i.e., $\varphi\in dom\left(T_{\mu}^{*}\right)$. Conversely, if $\varphi\in dom\left(T_{\mu}^{*}\right)$,
then 
\begin{align*}
\left\Vert T_{\mu}^{*}\varphi\right\Vert _{\mathscr{H}_{K}}^{2} & =\left\langle \int_{X}K\left(y,\cdot\right)\varphi\left(y\right)\mu\left(dy\right),\int_{X}K\left(z,\cdot\right)\varphi\left(z\right)\mu\left(dz\right)\right\rangle _{\mathscr{H}_{K}}\\
 & =\int_{X}\int_{X}\varphi\left(y\right)\varphi\left(z\right)K\left(y,z\right)\mu\left(dy\right)\mu\left(dz\right)<\infty
\end{align*}
which is (\ref{eq:A9}).
\end{proof}
\begin{cor}
Let $\mu\in Reg\left(K\right)$ as above. For all $x,z\in X$, then
\begin{align}
\left(T_{\mu}^{*}T_{\mu}K\left(\cdot,x\right)\right)\left(z\right) & =\int_{X}K\left(z,y\right)K\left(y,x\right)\mu\left(dy\right)\label{eq:A6}\\
 & =\left\langle K\left(\cdot,z\right),K\left(\cdot,x\right)\right\rangle _{L^{2}\left(\mu\right)}.\nonumber 
\end{align}
Moreover, 
\begin{equation}
T_{\mu}T_{\mu}^{*}\varphi=\int\varphi\left(y\right)K\left(y,\cdot\right)\mu\left(dy\right)\label{eq:A8}
\end{equation}
with $\varphi\in dom\left(T_{\mu}^{*}\right)$, the domain of the
operator $T_{\mu}^{*}$; i.e., $\varphi\in dom\left(T_{\mu}^{*}\right)$
if the RHS of (\ref{eq:A8}) is in $\mathscr{H}_{K}$. 
\end{cor}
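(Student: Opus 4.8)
The plan is to read off both identities by composing the two descriptions already established: formula (\ref{eq:A4}) for $T_{\mu}$ on the algebraic span of the kernel sections $\left\{ K(\cdot,x)\right\} _{x\in X}$, and formula (\ref{eq:A5}) of \lemref{Tadj} for the adjoint $T_{\mu}^{*}$, together with the symmetry $K(y,z)=K(z,y)$ (automatic, since $K$ is real-valued and p.d.). For the first displayed identity (\ref{eq:A6}) I would run $F=K(\cdot,x)$ first through $T_{\mu}$ and then through $T_{\mu}^{*}$; for the second, (\ref{eq:A8}), I would use that on its natural domain $T_{\mu}$ is just the canonical inclusion $\mathscr{H}_{K}\hookrightarrow L^{2}(\mu)$, so that $T_{\mu}T_{\mu}^{*}\varphi$ is merely $T_{\mu}^{*}\varphi$ re-read as an element of $L^{2}(\mu)$.

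For (\ref{eq:A6}): fix $x\in X$. Since $\mu\in Reg(K)$, \defref{reg} gives $K(\cdot,x)\in L^{2}(\mu)$, so (\ref{eq:A4}) says $T_{\mu}K(\cdot,x)=K(\cdot,x)$ in $L^{2}(\mu)$. Applying $T_{\mu}^{*}$ and invoking (\ref{eq:A5}) with $\varphi=K(\cdot,x)$ then gives $\left(T_{\mu}^{*}T_{\mu}K(\cdot,x)\right)(z)=\int_{X}K(y,z)K(y,x)\,\mu(dy)$ for every $z\in X$; by symmetry of $K$ the integrand equals $K(z,y)K(y,x)$, which is the first line of (\ref{eq:A6}). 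The right-hand integral is, by the definition of the $L^{2}(\mu)$ inner product of real functions, $\left\langle K(\cdot,z),K(\cdot,x)\right\rangle _{L^{2}(\mu)}$, and it is finite by Cauchy--Schwarz because both $K(\cdot,z)$ and $K(\cdot,x)$ lie in $L^{2}(\mu)$.

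For (\ref{eq:A8}): let $\varphi\in dom(T_{\mu}^{*})$. By \lemref{Tadj}, the function $T_{\mu}^{*}\varphi$ is $x\mapsto\int_{X}K(y,x)\varphi(y)\,\mu(dy)=\int_{X}\varphi(y)K(y,\cdot)\,\mu(dy)$, and by \lemref{dense} the hypothesis $\varphi\in dom(T_{\mu}^{*})$ is precisely the statement that this function lies in $\mathscr{H}_{K}$ (equivalently, that the double integral in (\ref{eq:A9}) is finite). Applying the closure $\overline{T_{\mu}}$ of $T_{\mu}$ (available by \lemref{dense}) simply records this same function as its class in $L^{2}(\mu)$, since $T_{\mu}$, hence $\overline{T_{\mu}}$, acts as $F\mapsto F$ when passing from $\mathscr{H}_{K}$ to $L^{2}(\mu)$. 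That is exactly (\ref{eq:A8}), and the parenthetical domain description simply restates the criterion of \lemref{dense}.

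The hard part will not be the algebra but the domain bookkeeping. In general $K(\cdot,x)$ need not belong to $dom(T_{\mu}^{*})$, nor $T_{\mu}^{*}\varphi$ to $dom(\overline{T_{\mu}})$, so the two formulas should be read on $dom(T_{\mu}^{*}T_{\mu})$ and $dom(T_{\mu}T_{\mu}^{*})$, with (\ref{eq:A9}) of \lemref{dense} pinning down exactly when each side is defined; if one wants (\ref{eq:A6}) for \emph{all} $x$, it suffices to add an integrability hypothesis such as $K\in L^{2}(\mu\times\mu)$ or $\int_{X}K(y,y)\,\mu(dy)<\infty$, either of which forces the relevant double integrals to converge via $|K(y,z)|\le\sqrt{K(y,y)\,K(z,z)}$. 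Once one is on these natural domains, the displayed equalities are immediate from (\ref{eq:A4}) and (\ref{eq:A5}).
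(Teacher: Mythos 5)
Your argument is correct and is essentially the route the paper intends: the corollary is stated without proof as an immediate composition of the defining formula (\ref{eq:A4}) for $T_{\mu}$ (inclusion on kernel sections) with the adjoint formula (\ref{eq:A5}) of \lemref{Tadj}, plus the symmetry of $K$, which is exactly what you do. Your closing remarks on domain bookkeeping (that $K(\cdot,x)\in dom(T_{\mu}^{*})$ and $T_{\mu}^{*}\varphi\in dom(\overline{T_{\mu}})$ are not automatic from $\mu\in Reg(K)$ alone) are a sensible refinement of the paper's informal statement rather than a deviation from it.
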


\begin{cor}
Let $K$, $\mathscr{H}_{K}$, and $\mu$ be as in \defref{reg}, and
let $T_{\mu}$ be the associated operator, see \lemref{Tadj}. Then
for $\varphi\in dom(T_{\mu}^{*})\subset L^{2}\left(\mu\right)$, and
$\alpha\in\mathbb{R}_{+}$, consider the following optimization problem:
\begin{equation}
\mathscr{H}_{K}\ni f=\mathop{argmin}\left\{ \left\Vert \varphi-T_{\mu}f\right\Vert _{L^{2}\left(\mu\right)}^{2}+\alpha\left\Vert f\right\Vert _{\mathscr{H}_{K}}^{2}\right\} .\label{eq:ko}
\end{equation}
The solution is 
\[
f=\left(\alpha I+T_{\mu}^{*}T_{\mu}\right)^{-1}\underset{\in\mathscr{H}_{K}}{\underbrace{\left(T_{\mu}^{*}\varphi\right)}}.
\]
\end{cor}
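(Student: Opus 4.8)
The plan is to recognize (\ref{eq:ko}) as a Tikhonov (ridge) regularization problem and to solve it by completing the square in $\mathscr{H}_{K}$, after first identifying $A\coloneqq\alpha I+T_{\mu}^{*}T_{\mu}$ as a self-adjoint, strictly positive, and boundedly invertible operator on $\mathscr{H}_{K}$.

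\emph{Step 1: the operator $A$.} By \lemref{dense} the operator $T_{\mu}$ is closable; its closure $\overline{T_{\mu}}$ is densely defined and closed, with $(\overline{T_{\mu}})^{*}=T_{\mu}^{*}$, and by von Neumann's theorem $T_{\mu}^{*}\overline{T_{\mu}}$ is self-adjoint and nonnegative on its natural domain. (This is the operator meant by $T_{\mu}^{*}T_{\mu}$; the closure is implicit.) Hence $A=\alpha I+T_{\mu}^{*}T_{\mu}$ is self-adjoint with spectrum contained in $[\alpha,\infty)$, so it is a bijection of $dom(A)$ onto $\mathscr{H}_{K}$ with bounded inverse, $\left\Vert A^{-1}\right\Vert\leq\alpha^{-1}$. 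Since $\varphi\in dom(T_{\mu}^{*})$, \lemref{dense} gives $T_{\mu}^{*}\varphi\in\mathscr{H}_{K}$, so $f_{0}\coloneqq A^{-1}T_{\mu}^{*}\varphi$ is a well-defined element of $dom(A)\subseteq dom(\overline{T_{\mu}})$ solving the normal equation $Af_{0}=T_{\mu}^{*}\varphi$.

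\emph{Step 2: completing the square.} For $f\in dom(\overline{T_{\mu}})$, expand the first term and use $\left\langle \varphi,T_{\mu}f\right\rangle _{L^{2}(\mu)}=\left\langle T_{\mu}^{*}\varphi,f\right\rangle _{\mathscr{H}_{K}}$ and $\left\Vert T_{\mu}f\right\Vert _{L^{2}(\mu)}^{2}=\left\langle T_{\mu}^{*}T_{\mu}f,f\right\rangle _{\mathscr{H}_{K}}$ to get
\[
\left\Vert \varphi-T_{\mu}f\right\Vert _{L^{2}(\mu)}^{2}+\alpha\left\Vert f\right\Vert _{\mathscr{H}_{K}}^{2}=\left\Vert \varphi\right\Vert _{L^{2}(\mu)}^{2}-2\,\mathrm{Re}\left\langle Af_{0},f\right\rangle _{\mathscr{H}_{K}}+\left\langle Af,f\right\rangle _{\mathscr{H}_{K}}.
\]
Using $A=A^{*}$ this rewrites as $\left\Vert \varphi\right\Vert _{L^{2}(\mu)}^{2}-\left\langle Af_{0},f_{0}\right\rangle _{\mathscr{H}_{K}}+\left\langle A(f-f_{0}),f-f_{0}\right\rangle _{\mathscr{H}_{K}}$, and since $A\geq\alpha I>0$ the last term is $\geq\alpha\left\Vert f-f_{0}\right\Vert _{\mathscr{H}_{K}}^{2}\geq0$, vanishing iff $f=f_{0}$. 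Hence the minimum is attained uniquely at $f=f_{0}=(\alpha I+T_{\mu}^{*}T_{\mu})^{-1}T_{\mu}^{*}\varphi$; for $f\notin dom(\overline{T_{\mu}})$ the objective is $+\infty$, so the minimization over all of $\mathscr{H}_{K}$ gives the same answer. (To make the manipulations with $\left\langle Af,f\right\rangle$ rigorous when $f$ lies only in $dom(\overline{T_{\mu}})$ rather than in $dom(A)$, one runs the same computation at the level of the closed quadratic form $f\mapsto\left\Vert \overline{T_{\mu}}f\right\Vert _{L^{2}(\mu)}^{2}+\alpha\left\Vert f\right\Vert _{\mathscr{H}_{K}}^{2}$ associated with $A$, whose domain is precisely $dom(\overline{T_{\mu}})$.)

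Alternatively one may argue variationally: the objective is strictly convex in $f$ because of the $\alpha\left\Vert f\right\Vert _{\mathscr{H}_{K}}^{2}$ term, and differentiating $t\mapsto\left\Vert \varphi-T_{\mu}(f+tg)\right\Vert _{L^{2}(\mu)}^{2}+\alpha\left\Vert f+tg\right\Vert _{\mathscr{H}_{K}}^{2}$ at $t=0$ and equating it to $0$ for all $g$ yields the stationarity condition $-T_{\mu}^{*}(\varphi-T_{\mu}f)+\alpha f=0$, i.e. $Af=T_{\mu}^{*}\varphi$, whose unique solution is again $f_{0}$; strict convexity identifies this critical point as the global minimizer. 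The one genuine technical point — the main obstacle — is the bookkeeping forced by the possible unboundedness of $T_{\mu}$: one must justify that $A=\alpha I+T_{\mu}^{*}T_{\mu}$ is self-adjoint with a bounded everywhere-defined inverse (von Neumann's theorem for the closure), and that $f_{0}$ and the increments $f-f_{0}$ live in domains on which the inner-product/form manipulations above are all legitimate. Once that is in place, the completion of the square is immediate.
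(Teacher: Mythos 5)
Your proof is correct, and it is more self-contained (and more careful about unbounded-operator technicalities) than the paper's, but it follows a different route. The paper's sketch reduces (\ref{eq:ko}) to a \emph{single} standard least-squares problem by introducing the augmented map $W:\mathscr{H}_{K}\rightarrow\mathscr{H}_{K}\times L^{2}\left(\mu\right)$, $Wf=\left(f,T_{\mu}f\right)$, with the weighted product norm $\alpha\left\Vert f\right\Vert _{\mathscr{H}_{K}}^{2}+\left\Vert g\right\Vert _{L^{2}\left(\mu\right)}^{2}$; then the objective is $\left\Vert Wf-\left(0,\varphi\right)\right\Vert ^{2}$, and the normal-equation formula $\left(W^{*}W\right)^{-1}W^{*}\left(0,\varphi\right)$ with $W^{*}\left(f,g\right)=\alpha f+T_{\mu}^{*}g$ immediately yields $\left(\alpha I+T_{\mu}^{*}T_{\mu}\right)^{-1}T_{\mu}^{*}\varphi$. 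You instead prove the least-squares fact from scratch: you establish via von Neumann's theorem that $A=\alpha I+T_{\mu}^{*}T_{\mu}$ (with the closure of $T_{\mu}$ understood) is self-adjoint with spectrum in $\left[\alpha,\infty\right)$ and bounded inverse, and then complete the square at the level of the closed quadratic form, with a convexity/stationarity argument as a second route. What the paper's approach buys is brevity -- all domain issues are swept into the citation of the standard least-squares approximation for the closable operator $W$ -- while your approach buys rigor on exactly the point the sketch leaves implicit: why $\alpha I+T_{\mu}^{*}T_{\mu}$ is invertible, why $f_{0}$ lies in the right domain, and why the minimization over all of $\mathscr{H}_{K}$ (rather than just $dom\left(\overline{T_{\mu}}\right)$) is attained uniquely at $f_{0}$. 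Both arguments hinge on the same normal equation, so there is no gap; the difference is presentational, and your version could serve as the detailed justification of the step the paper only cites.
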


\begin{proof}
This is a direct computation; see also \cite{MR3888850,jorgensen2019dimension,MR3642406}.

Sketch. Define $W:\mathscr{H}_{K}\rightarrow\mathscr{H}_{K}\times L^{2}\left(\mu\right)$,
$Wf=\left(f,T_{\mu}f\right)$, where 
\[
\left\Vert \left(f,g\right)\right\Vert _{\mathscr{H}_{K}\times L^{2}\left(\mu\right)}^{2}\coloneqq\alpha\left\Vert f\right\Vert _{\mathscr{H}_{K}}^{2}+\left\Vert g\right\Vert _{L^{2}\left(\mu\right)}^{2}.
\]
Then, $W^{*}\left(f,g\right)=\alpha f+T_{\mu}^{*}g$, and by the standard
least square approximation, 
\begin{eqnarray*}
\text{RHS}_{\left(\ref{eq:ko}\right)} & = & \mathop{argmin}\left\{ \left\Vert Wf-\left(0,\varphi\right)\right\Vert _{\mathscr{H}_{K}\times L^{2}\left(\mu\right)}^{2}\right\} \\
 & = & \left(W^{*}W\right)^{-1}W^{*}\left(\left(0,\varphi\right)\right)\\
 & = & \left(\alpha I+T_{\mu}^{*}T_{\mu}\right)^{-1}T_{\mu}^{*}\varphi.
\end{eqnarray*}
\end{proof}
\begin{rem*}
Recall that the functions 
\begin{equation}
\left(x_{1},x_{2},\cdots,x_{n}\right)\longmapsto\left(K\left(\cdot,x_{1}\right),K\left(\cdot,x_{2}\right),\cdots,K\left(\cdot,x_{n}\right)\right)\label{eq:A7}
\end{equation}
are often called feature functions, and $\mathscr{H}_{K}$ is interpreted
as a feature space.

\textbf{Kernel-optimization. }One of the more recent applications
of RKHSs is the kernel-optimization. It refers to training-data and
feature spaces in the context of machine learning. In numerical analysis,
a popular version of the method is used to produce splines from sample
points; and to create best spline-fits. In statistics, there are analogous
optimization problems going by the names \textquotedblleft least-square
fitting,\textquotedblright{} and \textquotedblleft maximum-likelihood\textquotedblright{}
estimation. 

What these methods have in common is a minimization (or a max problem)
involving a \textquotedblleft quadratic\textquotedblright{} expression
$Q$ (see (\ref{eq:ko})) with two terms: (i) a $L^{2}$-square applied
to a difference, and (ii) a penalty term which is a RKHS norm-squared.
In the application to determination of splines, the penalty term may
be a suitable Sobolev normed-square; i.e., $L^{2}$ norm-squared applied
to a chosen number of derivatives. Hence non-differentiable choices
will be \textquotedblleft penalized.\textquotedblright{} 
\end{rem*}
\begin{defn}
Let $\mathscr{H}$ be a Hilbert space with inner product denoted $\left\langle \cdot,\cdot\right\rangle $,
or $\left\langle \cdot,\cdot\right\rangle _{\mathscr{H}}$ when there
is more than one possibility to consider. Let $J$ be a countable
index set, and let $\left\{ w_{j}\right\} _{j\in J}$ be an indexed
family of non-zero vectors in $\mathscr{H}$. We say that $\left\{ w_{j}\right\} _{j\in J}$
is a \emph{frame }for $\mathscr{H}$ iff (Def.) there are two finite
positive constants $A$ and $B$ such that
\begin{equation}
A\left\Vert u\right\Vert _{\mathscr{H}}^{2}\leq\sum_{j\in J}\left|\left\langle w_{j},u\right\rangle _{\mathscr{H}}\right|^{2}\leq B\left\Vert u\right\Vert _{\mathscr{H}}^{2}\label{eq:en1}
\end{equation}
holds for all $u\in\mathscr{H}$. We say that it is a \emph{Parseval}
frame if $A=B=1$. 

For references to the theory and application of \emph{frames}, see
e.g., \cite{MR3009685,MR2538596,MR3121682}.
\end{defn}

\begin{lem}
If $\left\{ w_{j}\right\} _{j\in J}$ is a Parseval frame in $\mathscr{H}$,
then the (analysis) operator $T:\mathscr{H}\longrightarrow l^{2}\left(J\right)$,
\begin{equation}
Tu=\left(\left\langle w_{j},u\right\rangle _{\mathscr{H}}\right)_{j\in J}\label{eq:en2}
\end{equation}
is well-defined and isometric. Its adjoint $T^{*}:l^{2}\left(J\right)\longrightarrow\mathscr{H}$
is given by 
\begin{equation}
T^{*}\left(\left(\gamma_{j}\right)_{j\in J}\right):=\sum_{j\in J}\gamma_{j}w_{j}\label{eq:en3}
\end{equation}
and the following hold:

\begin{enumerate}
\item The sum on the RHS in (\ref{eq:en3}) is norm-convergent;
\item $T^{*}:l^{2}\left(J\right)\longrightarrow\mathscr{H}$ is co-isometric;
and for all $u\in\mathscr{H}$, we have 
\begin{equation}
u=T^{*}Tu=\sum_{j\in J}\left\langle w_{j},u\right\rangle w_{j}\label{eq:en4}
\end{equation}
where the RHS in (\ref{eq:en4}) is norm-convergent. 
\end{enumerate}
\end{lem}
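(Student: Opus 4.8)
The plan is to read everything off the two bounds in (\ref{eq:en1}) with $A=B=1$. First I would observe that the Parseval identity $\sum_{j\in J}\left|\left\langle w_{j},u\right\rangle _{\mathscr{H}}\right|^{2}=\left\Vert u\right\Vert _{\mathscr{H}}^{2}$, valid for every $u\in\mathscr{H}$, says exactly that $Tu=\left(\left\langle w_{j},u\right\rangle _{\mathscr{H}}\right)_{j\in J}$ is a well-defined element of $\ell^{2}\left(J\right)$ and that $\left\Vert Tu\right\Vert _{\ell^{2}\left(J\right)}=\left\Vert u\right\Vert _{\mathscr{H}}$. Hence $T$ is a bounded linear isometry with $\left\Vert T\right\Vert =1$; consequently its Hilbert-space adjoint $T^{*}\colon\ell^{2}\left(J\right)\to\mathscr{H}$ exists, is bounded with $\left\Vert T^{*}\right\Vert =1$, and (by polarization of $\left\Vert Tu\right\Vert ^{2}=\left\Vert u\right\Vert ^{2}$) satisfies $T^{*}T=I_{\mathscr{H}}$.

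Next I would identify $T^{*}$ on the dense subspace of finitely supported sequences: for such a $\gamma=\left(\gamma_{j}\right)$ and arbitrary $u\in\mathscr{H}$,
\[
\left\langle T^{*}\gamma,u\right\rangle _{\mathscr{H}}=\left\langle \gamma,Tu\right\rangle _{\ell^{2}\left(J\right)}=\sum_{j\in J}\overline{\gamma_{j}}\left\langle w_{j},u\right\rangle _{\mathscr{H}}=\Big\langle \sum_{j\in J}\gamma_{j}w_{j},\,u\Big\rangle _{\mathscr{H}},
\]
a finite sum, whence $T^{*}\gamma=\sum_{j}\gamma_{j}w_{j}$ there. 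To extend this to all of $\ell^{2}\left(J\right)$ and simultaneously obtain item (1), fix $\gamma\in\ell^{2}\left(J\right)$ and, for finite $F\subset J$, let $\gamma^{(F)}$ denote its truncation to $F$; then $\sum_{j\in F}\gamma_{j}w_{j}=T^{*}\gamma^{(F)}$, and since $\gamma^{(F)}\to\gamma$ in $\ell^{2}\left(J\right)$ as $F$ exhausts $J$ while $T^{*}$ is bounded, the partial sums converge in $\mathscr{H}$ with limit $T^{*}\gamma$. Moreover, for finite $F\subset F'$,
\[
\Big\Vert \sum_{j\in F'}\gamma_{j}w_{j}-\sum_{j\in F}\gamma_{j}w_{j}\Big\Vert _{\mathscr{H}}=\big\Vert T^{*}\big(\gamma^{(F')}-\gamma^{(F)}\big)\big\Vert _{\mathscr{H}}\le\Big(\sum_{j\in F'\setminus F}\left|\gamma_{j}\right|^{2}\Big)^{1/2},
\]
which tends to $0$, so the convergence is in fact unconditional. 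This establishes (\ref{eq:en3}) and the norm-convergence asserted in item (1).

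For item (2): since $T$ is an isometry, $\left(T^{*}\right)^{*}=T$ is an isometry, which is precisely the assertion that $T^{*}$ is co-isometric (equivalently $T^{*}\left(T^{*}\right)^{*}=T^{*}T=I_{\mathscr{H}}$). Combining $T^{*}T=I_{\mathscr{H}}$ with the formula for $T^{*}$ just established and the definition of $T$ gives, for every $u\in\mathscr{H}$,
\[
u=T^{*}Tu=T^{*}\big(\left(\left\langle w_{j},u\right\rangle \right)_{j\in J}\big)=\sum_{j\in J}\left\langle w_{j},u\right\rangle w_{j},
\]
norm-convergent by the previous paragraph applied to $\gamma=Tu\in\ell^{2}\left(J\right)$. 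The only step carrying real content is the passage from finitely supported sequences to all of $\ell^{2}\left(J\right)$ in the description of $T^{*}$, i.e., the norm-convergence of $\sum_{j}\gamma_{j}w_{j}$ for an arbitrary square-summable coefficient sequence; this is exactly where the finite upper frame bound ($B=1$, via boundedness of $T^{*}$) is used, and it is the one point I would treat carefully — everything else is formal manipulation of the adjoint relation.
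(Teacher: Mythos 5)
Your proof is correct and follows essentially the same route as the paper, which simply notes that the Parseval identity says $T$ is isometric, hence $T^{*}T=I_{\mathscr{H}}$ and $TT^{*}$ is the projection onto the range, deferring the remaining standard details to the frame literature. You supply exactly those omitted details (identification of $T^{*}$ on finitely supported sequences, the density/boundedness argument giving unconditional norm-convergence of the synthesis sum, and the co-isometry statement $T^{*}(T^{*})^{*}=T^{*}T=I_{\mathscr{H}}$), all correctly.
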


\begin{proof}
The details are standard in the theory of frames; see the cited papers
above. Note that (\ref{eq:en1}) for $A=B=1$ simply states that $V$
in (\ref{eq:en2}) is isometric, and so $T^{*}T=I_{\mathscr{H}}=$
the identity operator in $\mathscr{H}$, and $TT^{*}=$ the projection
onto the range of $V$.
\end{proof}
\begin{rem}
We may always get an orthonormal basis (ONB) or a Parseval frame $\left\{ f_{i}\right\} _{i\in J}$
in $\mathscr{H}_{K}$, where the index $J$ is usually $\mathbb{N}_{0}=\left\{ 0,1,2,3,\cdots\right\} $. 
\end{rem}

\begin{lem}
Given a Parseval frame $\left\{ f_{i}\right\} _{i\in\mathbb{N}_{0}}$
in $\mathscr{H}_{K}$, then
\begin{equation}
K\left(x,y\right)=\sum_{i\in\mathbb{N}_{0}}f_{i}\left(x\right)f_{i}\left(y\right),\quad x,y\in X.\label{eq:A11}
\end{equation}
If $\mu\in Reg\left(K\right)$, then 
\begin{equation}
\varphi\in L^{2}\left(\mu\right)\Longleftrightarrow\left(\left\langle \varphi,f_{i}\right\rangle _{L^{2}\left(\mu\right)}\right)_{i}\in l^{2}\left(\mathbb{N}_{0}\right)\label{eq:A12}
\end{equation}
\end{lem}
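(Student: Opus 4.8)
The plan is to derive both conclusions directly from the Parseval-frame machinery already in place, using the reproducing property together with \lemref{dense}. First, for the factorization \eqref{eq:A11}: fix $x,y\in X$, and apply the reconstruction formula \eqref{eq:en4} to the vector $u=K(\cdot,y)\in\mathscr{H}_K$. This gives $K(\cdot,y)=\sum_i\langle f_i,K(\cdot,y)\rangle_{\mathscr{H}_K}f_i$ with norm-convergence in $\mathscr{H}_K$. By the reproducing property \eqref{eq:a6}, $\langle f_i,K(\cdot,y)\rangle_{\mathscr{H}_K}=\overline{\langle K(\cdot,y),f_i\rangle_{\mathscr{H}_K}}=\overline{f_i(y)}=f_i(y)$ since everything is real-valued here. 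Evaluating the norm-convergent expansion at the point $x$ is legitimate because point-evaluation $F\mapsto F(x)=\langle K(\cdot,x),F\rangle_{\mathscr{H}_K}$ is a bounded linear functional on $\mathscr{H}_K$; hence $K(x,y)=\sum_i f_i(y)f_i(x)$, which is \eqref{eq:A11}.

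For the equivalence \eqref{eq:A12}: the forward direction is immediate, since if $\varphi\in L^2(\mu)$ then each inner product $\langle\varphi,f_i\rangle_{L^2(\mu)}$ is defined (one needs $f_i\in L^2(\mu)$, which follows because $\mu\in Reg(K)$ forces each $K(\cdot,x)\in L^2(\mu)$ and $f_i$ lies in the closed span of such kernel sections; alternatively $\|f_i\|_{L^2(\mu)}^2=\int\int f_i(y)f_i(z)K\cdots$ is controlled by \eqref{eq:A9}-type bounds), and then Bessel's inequality for the Parseval frame $\{f_i\}$ applied in $L^2(\mu)$—or rather the fact that $T_\mu$ maps the ONB/Parseval frame of $\mathscr{H}_K$ into $L^2(\mu)$—gives square-summability. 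More carefully: since $\{f_i\}$ is a Parseval frame in $\mathscr{H}_K$ and $T_\mu$ is (the closure of) a map $\mathscr{H}_K\to L^2(\mu)$, membership of $\varphi$ in $dom(T_\mu^*)$ is the natural hypothesis; but the statement as phrased wants a clean iff over all of $L^2(\mu)$, so I would read \eqref{eq:A12} as asserting that $\{f_i\}$, viewed inside $L^2(\mu)$, is a Bessel system whose analysis operator has dense range, i.e. the coefficient condition characterizes $L^2(\mu)$. The reverse direction then says: given $(c_i)\in\ell^2(\mathbb{N}_0)$, the series $\sum_i c_i f_i$ converges in $L^2(\mu)$ and represents an element whose frame coefficients are $(c_i)$; convergence here uses that $\{f_i\}$ has a finite Bessel bound in $L^2(\mu)$, which is exactly where $\mu\in Reg(K)$ enters, via \lemref{dense} and the finiteness of the double integral in \eqref{eq:A9}.

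The main obstacle I anticipate is making the second equivalence precise: the Parseval-frame property of $\{f_i\}$ holds in $\mathscr{H}_K$, not automatically in $L^2(\mu)$, and $T_\mu$ is generally unbounded. So one cannot simply push the frame inequality \eqref{eq:en1} across $T_\mu$. The honest route is to show that $\{f_i\}$ is at least a Bessel sequence in $L^2(\mu)$ with some bound $B_\mu<\infty$ (so the ``$\Longleftarrow$'' series converges and the analysis map into $\ell^2$ is bounded), and that the analysis map has dense range in $L^2(\mu)$ because $dom(T_\mu^*)$ is dense there (\lemref{dense}); combining these identifies $L^2(\mu)$ with the closure of the coefficient image. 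I would therefore structure the proof of \eqref{eq:A12} as: (i) $f_i\in L^2(\mu)$ for all $i$, using $\mu\in Reg(K)$; (ii) the Bessel bound in $L^2(\mu)$, obtained from \eqref{eq:A9} applied to $\varphi$ and the expansion $K(y,z)=\sum_i f_i(y)f_i(z)$ from \eqref{eq:A11}, which lets one write $\int\int \varphi\varphi K\,d\mu d\mu=\sum_i|\langle\varphi,f_i\rangle_{L^2(\mu)}|^2$; (iii) read off both implications from this identity, since the left side is finite exactly when $\varphi\in dom(T_\mu^*)$, and density of that domain plus the monotone-convergence argument upgrades the statement to all of $L^2(\mu)$. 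Step (ii) — interchanging the sum over $i$ with the double $\mu$-integral — is the one delicate point, handled by monotonicity/Tonelli since all terms $f_i(y)f_i(z)$ sum to the positive-definite kernel and $|\varphi|\in L^2(\mu)$.
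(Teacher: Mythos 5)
Your argument for (\ref{eq:A11}) is correct and is essentially the paper's own proof: the paper simply expands $K_y=\sum_i\langle f_i,K_y\rangle_{\mathscr{H}_K}f_i$ and uses the reproducing property (\ref{eq:a6}) to identify the coefficients as $f_i(y)$; your added remark that pointwise evaluation of the norm-convergent series is legitimate because point evaluations are bounded on $\mathscr{H}_K$ is the right justification. Note, however, that the paper's proof stops there: it offers no argument at all for (\ref{eq:A12}), so everything you say about the second claim is on your own account.

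For (\ref{eq:A12}) there are genuine gaps in your sketch. First, the ``immediate'' forward direction is not immediate: the Parseval property of $\{f_i\}$ lives in $\mathscr{H}_K$, not in $L^2(\mu)$, and $\mu\in Reg(K)$ does not make $\{f_i\}$ a Bessel system in $L^2(\mu)$ (indeed $T_\mu$ may be unbounded); you concede this yourself mid-proof, which amounts to withdrawing the claim rather than proving it. Second, your key step (ii) -- interchanging $\sum_i$ with the double $\mu$-integral by ``monotonicity/Tonelli since the terms $f_i(y)f_i(z)$ sum to the positive-definite kernel'' -- does not work as stated: the individual summands $\varphi(y)\varphi(z)f_i(y)f_i(z)$ are not nonnegative (only their integrals $\left|\left\langle f_i,\varphi\right\rangle_{L^2(\mu)}\right|^2$ are), and positivity of the total kernel $K(y,z)$ gives no termwise positivity, so Tonelli does not apply. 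A clean repair is to avoid the pointwise interchange altogether: for $\varphi\in dom(T_\mu^*)$, \lemref{Tadj} gives $T_\mu^*\varphi=\int\varphi(y)K_y\,d\mu(y)$ as a vector of $\mathscr{H}_K$, whence $\left\langle f_i,T_\mu^*\varphi\right\rangle_{\mathscr{H}_K}=\int\varphi(y)f_i(y)\,d\mu(y)=\left\langle f_i,\varphi\right\rangle_{L^2(\mu)}$, and the Parseval identity in $\mathscr{H}_K$ yields $\sum_i\left|\left\langle f_i,\varphi\right\rangle_{L^2(\mu)}\right|^2=\left\Vert T_\mu^*\varphi\right\Vert_{\mathscr{H}_K}^2$, which by \lemref{dense} equals the double integral in (\ref{eq:A9}). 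This shows that the square-summability of the coefficients characterizes membership in $dom(T_\mu^*)$ -- a dense but in general proper subspace of $L^2(\mu)$ -- so the equivalence as literally printed in (\ref{eq:A12}) should be read with $dom(T_\mu^*)$ on the left, exactly as your own reinterpretation suggests; as an iff over all of $L^2(\mu)$ it is not established by your argument (nor by the paper).
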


\begin{proof}
Given $\left\{ f_{i}\right\} _{i\in\mathbb{N}_{0}}$ in $\mathscr{H}_{K}$,
one checks that 
\[
K\left(\cdot,y\right)=K_{y}\left(\cdot\right)=\sum_{i\in\mathbb{N}_{0}}\left\langle f_{i},K_{y}\right\rangle f_{i}\left(\cdot\right)=\sum_{i\in\mathbb{N}_{0}}f_{i}\left(y\right)f_{i}\left(\cdot\right)
\]
which is (\ref{eq:A11}).
\end{proof}
\begin{question}
Find the spectrum of $T_{\mu}$ as $\mu$ varies in $Reg\left(K\right)$.
Actually it is the selfadjoint operator $T_{\mu}T_{\mu}^{*}:L^{2}\left(\mu\right)\rightarrow L^{2}\left(\mu\right)$,
whose spectrum we want. 
\end{question}

It is important to keep in mind that we get a family of operators
$T_{\mu}$ indexed by $\mu\in Reg\left(K\right)$. They are densely
defined operators in the Hilbert space $\mathscr{H}_{K}$, but the
spectra may vary with $\mu$. 

Fix a p.d. kernel $X\times X\xrightarrow{\;K\;}\mathbb{R}$ on a set
$X$. Assign the Borel $\sigma$-algebra $\mathscr{B}_{K}$ and measure
$\mu$, and get $\left(X,\mathscr{B}_{K}\right)$ as above. What is
the interconnection between the following three conditions?
\begin{enumerate}
\item $K\left(\cdot,x\right)\in L^{2}\left(\mu\right)$, for all $x\in X$;
\item $\left\{ f_{i}\right\} \subset L^{2}\left(\mu\right)$, for all ONB
$\left\{ f_{i}\right\} $ in $\mathscr{H}_{K}$;
\item How is the system $\left\{ f_{i}\right\} $ related to the spectral
properties of $T_{\mu}T_{\mu}^{*}:L^{2}\left(\mu\right)\rightarrow L^{2}\left(\mu\right)$?
\end{enumerate}
A related question: Consider a p.d. kernel $K$ on $X\times X$. Suppose
$X$ is discrete and countable, equipped with the counting measure
$\gamma$ on $X$, i.e., 
\begin{equation}
\gamma\left(B\right)=\#\left(B\right),\;\forall B\subset X.\label{eq:A13}
\end{equation}
When is 
\begin{equation}
\gamma\in Reg\left(K\right)?\label{eq:A14}
\end{equation}
We shall give a precise solution to (\ref{eq:A14}) in the following
section.

\section{\label{sec:ofp}Operators from p.d. kernels, and some of their applications}

Here, we consider a special case in the setting of \secref{pdk},
where a p.d. kernel admits a factorization in some $L^{2}\left(\mu\right)$-space
with $\mu\in Reg\left(K\right)$. 

Let $X\times X\xrightarrow{\;K\;}\mathbb{R}$ be a p.d. kernel on
a set $X$. Let $\mathscr{H}_{K}$ be the associated RKHS. Assume:
\begin{enumerate}
\item $\mu\in Reg\left(K\right)$;
\item $\left\{ K_{x}\right\} _{x\in X}$ is dense in $L^{2}\left(\mu\right)$;
\item for all $F\in\mathscr{H}_{K}$, 
\[
\left\Vert F\right\Vert _{\mathscr{H}_{K}}\geq\left\Vert F\right\Vert _{L^{2}\left(\mu\right)}.
\]
\end{enumerate}
\begin{lem}
\label{lem:db1}Let $(\mathscr{H}_{j},\left\Vert \cdot\right\Vert _{j})$,
$j=1,2$, be a pair of Hilbert spaces. Suppose the inclusion map $J:\mathscr{H}_{1}\hookrightarrow\mathscr{H}_{2}$
has dense image, and $\left\Vert x\right\Vert _{1}\geq\left\Vert x\right\Vert _{2}$,
for all $x\in\mathscr{H}_{1}$. Then there exists a unique positive
(selfadjoint) operator $A\geq1$, such that $\mathscr{H}_{1}=dom\left(A^{1/2}\right)$
and 
\[
\left\langle x,y\right\rangle _{1}=\left\langle A^{1/2}x,A^{1/2}y\right\rangle _{2}
\]
for all $x,y\in\mathscr{H}_{1}$. 
\end{lem}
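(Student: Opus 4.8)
The plan is to realize $A$ as the inverse of a bounded, positive, injective contraction on $\mathscr{H}_{2}$ manufactured from the inclusion map, and then to identify the domain of $A^{1/2}$ via a polar decomposition.

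First I would let $J\colon\mathscr{H}_{1}\hookrightarrow\mathscr{H}_{2}$ be the inclusion. The hypothesis $\left\Vert x\right\Vert _{1}\geq\left\Vert x\right\Vert _{2}$ says precisely that $J$ is a contraction, so $J$ is bounded with $\left\Vert J\right\Vert \leq1$; it is injective, and by assumption it has dense range. Put $B:=JJ^{*}\in B\left(\mathscr{H}_{2}\right)$. Then $B$ is positive and selfadjoint with $0\leq B\leq I$, since $\left\langle By,y\right\rangle _{2}=\left\Vert J^{*}y\right\Vert _{1}^{2}\leq\left\Vert y\right\Vert _{2}^{2}$; and $B$ is injective, because $By=0$ forces $\left\Vert J^{*}y\right\Vert _{1}=0$, i.e.\ $y\perp\overline{\operatorname{ran}J}=\mathscr{H}_{2}$, hence $y=0$. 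By the spectral theorem the spectrum of $B$ is contained in $\left(0,1\right]$, so $A:=B^{-1}$ is a well-defined positive selfadjoint operator in $\mathscr{H}_{2}$ with $\operatorname{spec}\left(A\right)\subseteq\left[1,\infty\right)$, that is $A\geq1$; moreover $A^{1/2}=B^{-1/2}$ and $\operatorname{dom}\left(A^{1/2}\right)=\operatorname{ran}\left(B^{1/2}\right)$.

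Next I would identify $\operatorname{dom}\left(A^{1/2}\right)$ and check the inner-product identity. Let $J=VC^{1/2}$ be the polar decomposition, where $C:=J^{*}J\in B\left(\mathscr{H}_{1}\right)$ and $V\colon\mathscr{H}_{1}\to\mathscr{H}_{2}$ is a partial isometry with initial space $\left(\ker J\right)^{\perp}$ and final space $\overline{\operatorname{ran}J}$. Since $J$ is injective and has dense range, $V$ is in fact unitary. Then $B=JJ^{*}=VCV^{*}$, hence $B^{1/2}=VC^{1/2}V^{*}=JV^{*}$, so $\operatorname{dom}\left(A^{1/2}\right)=\operatorname{ran}\left(B^{1/2}\right)=\operatorname{ran}\left(JV^{*}\right)=\operatorname{ran}\left(J\right)=\mathscr{H}_{1}$. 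For the identity, fix $x,y\in\mathscr{H}_{1}$; from $B^{1/2}\left(Vx\right)=JV^{*}Vx=Jx$ we get $A^{1/2}\left(Jx\right)=B^{-1/2}B^{1/2}\left(Vx\right)=Vx$, and therefore $\left\langle A^{1/2}x,A^{1/2}y\right\rangle _{2}=\left\langle Vx,Vy\right\rangle _{2}=\left\langle x,y\right\rangle _{1}$ because $V$ is unitary.

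Finally, for uniqueness I would note that any operator $A'$ with the stated properties satisfies $\operatorname{dom}\left(A'^{1/2}\right)=\mathscr{H}_{1}$ and $\left\Vert A'^{1/2}x\right\Vert _{2}^{2}=\left\Vert x\right\Vert _{1}^{2}$ for $x\in\mathscr{H}_{1}$, so the closed nonnegative quadratic form of $A'$ agrees on its domain with that of $A$; since a nonnegative selfadjoint operator is uniquely determined by its closed quadratic form (the representation theorem for such forms), $A'=A$. The step needing the most care is the domain identification in the previous paragraph — specifically the assertion that the partial isometry $V$ is unitary, which is exactly where injectivity of $J$ together with density of $\operatorname{ran}J$ both get used; everything else is routine spectral calculus. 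Alternatively, one can obtain existence and uniqueness in one stroke by observing that $x\mapsto\left\Vert x\right\Vert _{1}^{2}$ is a densely defined, closed, nonnegative quadratic form on $\mathscr{H}_{2}$ dominating $\left\Vert \cdot\right\Vert _{2}^{2}$, and invoking the representation theorem directly.
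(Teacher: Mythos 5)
Your proof is correct and follows essentially the same route as the paper, which simply sets $A=(JJ^{*})^{-1}$ and defers the verification to the literature; your polar-decomposition identification of $\operatorname{dom}(A^{1/2})=\operatorname{ran}(J)$ and the closed-form uniqueness argument supply exactly the details the paper omits. One small repair: injectivity of $B=JJ^{*}$ does not give $\operatorname{spec}(B)\subseteq(0,1]$ (the point $0$ can lie in the spectrum when $B$ is not boundedly invertible); what you actually need, and do have, is that the spectral measure of $B$ assigns no mass to $\{0\}$, which is enough to define $A=B^{-1}$ as a positive selfadjoint operator with $A\geq1$.
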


\begin{proof}
It can be verified that 
\begin{equation}
A\coloneqq\left(JJ^{*}\right)^{-1}\label{eq:A1}
\end{equation}
is the unique positive operator having the desired properties. For
more details, see e.g., \cite{MR0282379,MR1009163,MR1255973,MR0493420}
and \cite{MR3390972}.
\end{proof}
\begin{cor}
Let $\mathscr{H}_{1}=\mathscr{H}_{K}$ and $\mathscr{H}_{2}=L^{2}\left(\mu\right)$.
In view of \lemref{db1}, then $\Phi:X\rightarrow L^{2}\left(\mu\right)$,
defined as 
\[
\Phi\left(x\right)=A^{1/2}K_{x},\quad x\in X,
\]
is a feature map for the p.d. kernel $K$, and $L^{2}\left(\mu\right)$
is the corresponding feature space. 
\end{cor}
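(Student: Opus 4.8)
The plan is to make the operator $A$ of \lemref{db1} concrete in the present situation, observe that each $K_{x}=K\left(\cdot,x\right)$ lies in the form domain $dom\left(A^{1/2}\right)=\mathscr{H}_{K}$, and then read the factorization $K\left(x,y\right)=\left\langle \Phi\left(x\right),\Phi\left(y\right)\right\rangle _{L^{2}\left(\mu\right)}$ directly off the identity supplied by \lemref{db1} together with the reproducing property. So the real content is just verifying that the standing hypotheses (1)--(3) of this section are exactly the hypotheses of \lemref{db1}.

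First I would set $\mathscr{H}_{1}=\mathscr{H}_{K}$, $\mathscr{H}_{2}=L^{2}\left(\mu\right)$, and take $J$ to be the inclusion $\mathscr{H}_{K}\hookrightarrow L^{2}\left(\mu\right)$ (which is legitimate because assumption (3) presupposes $\mathscr{H}_{K}\subset L^{2}\left(\mu\right)$ as a space of functions, and (3) then says precisely $\left\Vert F\right\Vert _{\mathscr{H}_{K}}\geq\left\Vert F\right\Vert _{L^{2}\left(\mu\right)}$; equivalently, $J=\overline{T_{\mu}}$ with $T_{\mu}$ as in (\ref{eq:A4})). Assumption (2) gives that $J$ has dense image, since its range contains the dense set $\left\{ K_{x}\right\} _{x\in X}$, and assumption (1), $\mu\in Reg\left(K\right)$, is what makes $J$ well defined on the generators $K_{x}$ in the first place. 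Hence \lemref{db1} applies and produces a unique positive selfadjoint operator $A=\left(JJ^{*}\right)^{-1}\geq1$ on $L^{2}\left(\mu\right)$ with $dom\left(A^{1/2}\right)=\mathscr{H}_{K}$ and
\[
\left\langle F,G\right\rangle _{\mathscr{H}_{K}}=\left\langle A^{1/2}F,A^{1/2}G\right\rangle _{L^{2}\left(\mu\right)},\qquad F,G\in\mathscr{H}_{K}.
\]

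Next, because $K_{x}\in\mathscr{H}_{K}$ for every $x\in X$ (it is a reproducing kernel) and $\mathscr{H}_{K}=dom\left(A^{1/2}\right)$, the vector $\Phi\left(x\right)=A^{1/2}K_{x}$ is a well-defined element of $L^{2}\left(\mu\right)$. Inserting $F=K_{x}$, $G=K_{y}$ in the displayed identity and then using the reproducing property (\ref{eq:a6}) gives
\[
\left\langle \Phi\left(x\right),\Phi\left(y\right)\right\rangle _{L^{2}\left(\mu\right)}=\left\langle A^{1/2}K_{x},A^{1/2}K_{y}\right\rangle _{L^{2}\left(\mu\right)}=\left\langle K_{x},K_{y}\right\rangle _{\mathscr{H}_{K}}=K\left(x,y\right),
\]
which is exactly the statement that $\Phi$ is a feature map for $K$ with feature space $L^{2}\left(\mu\right)$. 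As a complement I would record that $A^{1/2}$ is in fact a unitary isomorphism of $\mathscr{H}_{K}$ onto $L^{2}\left(\mu\right)$: it is isometric by the identity above, and surjective since $A^{-1/2}$ is bounded ($A\geq1$) and $A^{1/2}A^{-1/2}=I_{L^{2}\left(\mu\right)}$. Consequently the closed span of $\Phi\left(X\right)=A^{1/2}\left(\left\{ K_{x}\right\} \right)$ is all of $L^{2}\left(\mu\right)$ and $\mathscr{H}_{K}$ embeds isometrically, i.e., this is the "minimality/universality" of $\mathscr{H}_{K}$ among feature spaces, specialized to the chosen $L^{2}\left(\mu\right)$.

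The only point needing care is the bookkeeping of the first paragraph: one must be sure that (3) is being used as a genuine injective inclusion $\mathscr{H}_{K}\hookrightarrow L^{2}\left(\mu\right)$, so that \lemref{db1} applies verbatim, rather than merely as contractivity of a possibly non-injective $\overline{T_{\mu}}$; under the standing assumptions of this section this identification is exactly what is being posited, and once the hypotheses of \lemref{db1} are in place the feature-map identity is immediate.
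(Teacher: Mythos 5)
Your proposal is correct and follows exactly the route the paper intends: the corollary is read off from \lemref{db1} applied to the inclusion $\mathscr{H}_{K}\hookrightarrow L^{2}\left(\mu\right)$ (justified by the standing assumptions (1)--(3)), and the identity $K\left(x,y\right)=\left\langle A^{1/2}K_{x},A^{1/2}K_{y}\right\rangle _{L^{2}\left(\mu\right)}$ is precisely the chain of equalities the paper itself uses in the proof of the subsequent corollary. Your added observations (injectivity bookkeeping for $J$, and $A^{1/2}$ being a unitary of $\mathscr{H}_{K}$ onto $L^{2}\left(\mu\right)$ since $A\geq1$) are accurate refinements, not deviations.
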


\begin{cor}
Let $A$ be as in (\ref{eq:A1}). Suppose $A^{-1}$ is compact with
spectral decomposition 
\[
A^{-1}=\sum_{i=1}^{\infty}\lambda_{i}\left|u_{i}\left\rangle \right\langle u_{i}\right|,
\]
where $1\geq\lambda_{i}\geq\lambda_{i+1}>0$, $\lambda_{i}\rightarrow0$
as $i\rightarrow\infty$, and $\left\{ u_{i}\right\} $ is an ONB
in $L^{2}\left(\mu\right)$.

Then, for all $x,y\in X$, it holds that 
\[
K\left(x,y\right)=\sum_{i=1}^{\infty}\lambda_{i}^{-1}\left\langle K_{x},u_{i}\right\rangle _{L^{2}}\left\langle u_{i},K_{y}\right\rangle _{L^{2}}.
\]
\end{cor}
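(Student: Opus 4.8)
The plan is to read off the formula from the identity supplied by \lemref{db1} together with the spectral data of $A$. Recall from \lemref{db1} (applied to $\mathscr{H}_{1}=\mathscr{H}_{K}$, $\mathscr{H}_{2}=L^{2}\left(\mu\right)$) that $\mathscr{H}_{K}=dom\left(A^{1/2}\right)$, viewed as a subspace of $L^{2}\left(\mu\right)$, and that
\[
\left\langle F,G\right\rangle _{\mathscr{H}_{K}}=\left\langle A^{1/2}F,A^{1/2}G\right\rangle _{L^{2}\left(\mu\right)},\qquad F,G\in\mathscr{H}_{K}.
\]
In particular each $K_{x}=K\left(\cdot,x\right)$ lies in $\mathscr{H}_{K}=dom\left(A^{1/2}\right)$, so $A^{1/2}K_{x}$ is a bona fide vector in $L^{2}\left(\mu\right)$. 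Then, by the reproducing property, $K\left(x,y\right)=\left\langle K_{x},K_{y}\right\rangle _{\mathscr{H}_{K}}=\left\langle A^{1/2}K_{x},A^{1/2}K_{y}\right\rangle _{L^{2}\left(\mu\right)}$.

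Next I would use the hypothesis $A^{-1}=\sum_{i}\lambda_{i}\left|u_{i}\right\rangle \left\langle u_{i}\right|$ with $\left\{ u_{i}\right\} $ an ONB of $L^{2}\left(\mu\right)$: the self-adjoint operator $A$ is diagonalized by the same ONB, $Au_{i}=\lambda_{i}^{-1}u_{i}$, so $u_{i}\in dom\left(A^{1/2}\right)$ and $A^{1/2}u_{i}=\lambda_{i}^{-1/2}u_{i}$. Equivalently, $dom\left(A^{1/2}\right)=\big\{ f\in L^{2}\left(\mu\right):\sum_{i}\lambda_{i}^{-1}\left|\left\langle f,u_{i}\right\rangle \right|^{2}<\infty\big\}$ and $\left\Vert f\right\Vert _{\mathscr{H}_{K}}^{2}=\sum_{i}\lambda_{i}^{-1}\left|\left\langle f,u_{i}\right\rangle _{L^{2}}\right|^{2}$ for $f\in\mathscr{H}_{K}$. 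Since $A^{1/2}$ is self-adjoint and $u_{i}\in dom\left(A^{1/2}\right)$, for $F\in dom\left(A^{1/2}\right)$ one has $\left\langle A^{1/2}F,u_{i}\right\rangle _{L^{2}}=\left\langle F,A^{1/2}u_{i}\right\rangle _{L^{2}}=\lambda_{i}^{-1/2}\left\langle F,u_{i}\right\rangle _{L^{2}}$. Expanding $A^{1/2}K_{x}$ and $A^{1/2}K_{y}$ in the ONB $\left\{ u_{i}\right\} $ and invoking Parseval's identity then gives
\[
K\left(x,y\right)=\left\langle A^{1/2}K_{x},A^{1/2}K_{y}\right\rangle _{L^{2}}=\sum_{i}\left\langle A^{1/2}K_{x},u_{i}\right\rangle _{L^{2}}\overline{\left\langle A^{1/2}K_{y},u_{i}\right\rangle }_{L^{2}}=\sum_{i}\lambda_{i}^{-1}\left\langle K_{x},u_{i}\right\rangle _{L^{2}}\left\langle u_{i},K_{y}\right\rangle _{L^{2}},
\]
which is the asserted formula (all inner products are real since $K$ is $\mathbb{R}$-valued), with norm-convergence of the series coming for free from Parseval.

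The only delicate point — and the main, though modest, obstacle — is the bookkeeping around the unbounded operator $A^{1/2}$: one must be sure that $K_{x}$ actually lies in $dom\left(A^{1/2}\right)$ (this is exactly the content of the identification $\mathscr{H}_{K}=dom\left(A^{1/2}\right)$ in \lemref{db1}, together with $K_{x}\in\mathscr{H}_{K}$), and that $A^{1/2}$ may be moved across the $L^{2}$-inner product against each $u_{i}$ (legitimate precisely because $u_{i}$ is an eigenvector, hence in $dom\left(A^{1/2}\right)$). Once $A^{1/2}K_{x}$ is recognized as an honest element of $L^{2}\left(\mu\right)$, everything reduces to Parseval expansion in the diagonalizing ONB, and no further analytic estimates are needed.
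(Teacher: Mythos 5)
Your proposal is correct and follows essentially the same route as the paper's proof: write $K(x,y)=\left\langle A^{1/2}K_{x},A^{1/2}K_{y}\right\rangle _{L^{2}}$ via \lemref{db1}, expand in the ONB $\left\{ u_{i}\right\} $ by Parseval, and move $A^{1/2}$ onto the eigenvectors $u_{i}$ to pick up the factors $\lambda_{i}^{-1}$. The only difference is that you make the domain bookkeeping for the unbounded operator $A^{1/2}$ explicit, which the paper leaves implicit.
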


\begin{proof}
One checks that
\begin{align*}
K\left(x,y\right) & =\left\langle K_{x},K_{y}\right\rangle _{\mathscr{H}_{K}}\\
 & =\left\langle \Phi\left(x\right),\Phi\left(y\right)\right\rangle _{L^{2}}\\
 & =\left\langle A^{1/2}K_{x},A^{1/2}K_{y}\right\rangle _{L^{2}}\\
 & =\sum_{i=1}^{\infty}\left\langle A^{1/2}K_{x},u_{i}\right\rangle _{L^{2}}\left\langle u_{i},A^{1/2}K_{y}\right\rangle _{L^{2}}\\
 & =\sum_{i=1}^{\infty}\left\langle K_{x},A^{1/2}u_{i}\right\rangle _{L^{2}}\left\langle A^{1/2}u_{i},K_{y}\right\rangle _{L^{2}}\\
 & =\sum_{i=1}^{\infty}\lambda_{i}^{-1}\left\langle K_{x},u_{i}\right\rangle _{L^{2}}\left\langle u_{i},K_{y}\right\rangle _{L^{2}}.
\end{align*}
\end{proof}

\section{\label{sec:rp}Restrictions of Positive Definite Kernels}

Let $K:X\times X\rightarrow\mathbb{C}$ be a positive definite kernel
defined on $X\times X$ (where $X$ is a fixed set); i.e., it is assumed
that for all finite subset $F\subset X$, and scalars $\left(c\left(x\right)\right)_{x\in F}$
in $\mathbb{C}^{\left|F\right|}$ we have 
\begin{equation}
\underset{F\times F}{\sum\sum}\overline{c\left(x\right)}c\left(y\right)K\left(x,y\right)\geq0.\label{eq:rp1}
\end{equation}
As before the RKHS will be denoted $\mathscr{H}_{K}$. 

If $V\subset X$ is a countably infinite subset, then 
\begin{equation}
K_{V}\coloneqq K\left(\cdot,\cdot\right)\big|_{V\times V}\label{eq:rp2}
\end{equation}
may be considered as an $\infty\times\infty$ matrix with $V$ serving
as row $\times$ column index. 

Consider the standard $l^{2}$-space $l^{2}\left(V\right)$ with dense
subspace $l_{0}^{2}\left(V\right)$ consisting of finitely supported
sequences, i.e., 
\begin{equation}
c\in l_{0}^{2}\left(V\right)\underset{\text{def}}{\Longleftrightarrow}\text{\ensuremath{\exists F} finite s.t. \ensuremath{c\equiv0} in \ensuremath{V\backslash F}.}\label{eq:rp3}
\end{equation}

We now turn to the discretized version of the condition we introduced
in \defref{reg} in the general context of sigma-finite measures.
Since we are now making selection of countably discrete subsets $V$
of $X$, we will have a condition for each choice of $V$. See details
below: 
\begin{rem}
When $K$, $X$, and $V\subset X$ are specified as in (\ref{eq:rp2}),
we shall impose the following $l^{2}\left(V\right)$ condition, which
restricts the possible choice of countable discrete subsets $V\subset X$: 

We shall assume that for all $y\in V$, 
\begin{equation}
\sum_{x\in V}\left|K\left(x,y\right)\right|^{2}\leq C_{y}^{\left(V\right)}<\infty,\label{eq:rp3a}
\end{equation}
with the constant $C_{y}^{\left(V\right)}$ depending on $y$, and
choice of $V$. 
\end{rem}

\begin{example}
If $X=\mathbb{R}$, and $K=$ the Shannon since kernel, i.e., 
\[
K\left(x,y\right)=\frac{\sin\pi\left(x-y\right)}{\pi\left(x-y\right)},\quad\forall\left(x,y\right)\in\mathbb{R}\times\mathbb{R}
\]
then one checks that condition (\ref{eq:rp3a}) is satisfied when
$V=\left(x_{n}\right)_{n\in\mathbb{N}}\subset\mathbb{R}\backslash\left\{ 0\right\} $
satisfying 
\begin{equation}
\sum_{n\in\mathbb{N}}\left|x_{n}\right|^{-2}<\infty.\label{eq:rp3b}
\end{equation}
The verification is a direct computation; it is illustrated by the
graph below (\figref{rp1}). 
\end{example}

\begin{figure}[H]
\includegraphics[width=0.4\columnwidth]{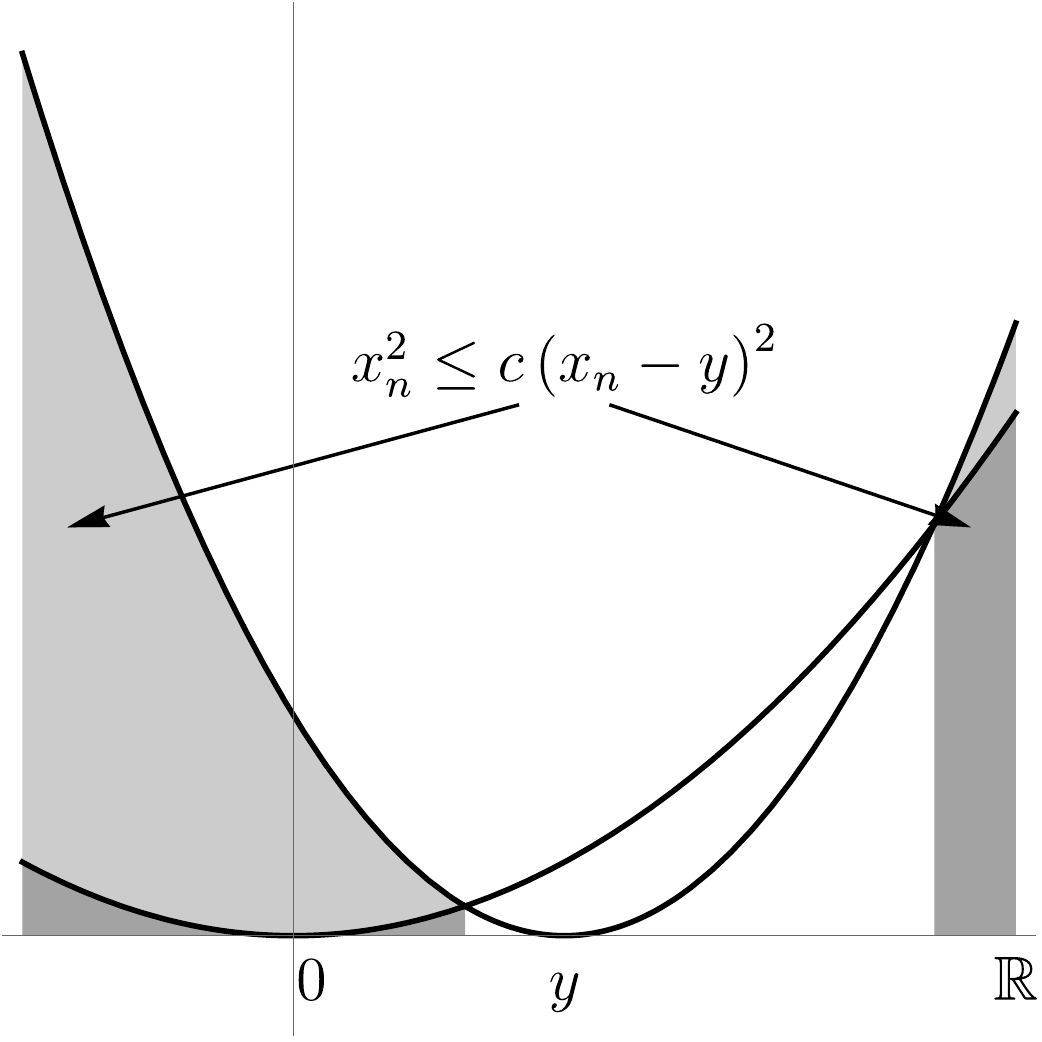}

\caption{\label{fig:rp1}Illustration of condition (\ref{eq:rp3a}) for the
Shannon sinc kernel. Assume $c>1$, then the inequality in (\ref{eq:rp3a})
is satisfied when $\left|x_{n}\right|$ is large. In the shaded region,
$x_{n}^{2}\protect\leq c\left(x_{n}-y\right)^{2}$. }
\end{figure}

We now return to the general case. Note that the action on $l^{2}\left(V\right)$
given by 
\begin{equation}
\left(K_{V}c\right)\left(x\right)=\sum_{y\in F}K_{V}\left(x,y\right)c\left(y\right)\label{eq:rp4}
\end{equation}
is well defined. Note the finite set $F$ from (\ref{eq:rp3}) depends
on $c$. 

Hence, when $V$ is given, assumed countably infinite, it is of interest
to consider the case when $K_{V}$, as in (\ref{eq:rp4}), defines
a bounded operator in $l^{2}\left(V\right)$; and when this operator
is invertible. 
\begin{prop}
\label{prop:rp1}Let $K$, $X$, and $V$ be as above, then the operator
$K_{V}$ (see (\ref{eq:rp4})) is bounded (relative to $l^{2}\left(V\right)$)
with bounded inverse if and only if there are constants $A,B$ (depending
on $K,V$) such that 
\[
0<A\leq B<\infty,
\]
and one of the following two conditions hold: 
\begin{enumerate}
\item $A\left\Vert c\right\Vert _{l^{2}\left(V\right)}^{2}\leq\sum_{x\in V}\left|\sum_{y\in V}K\left(x,y\right)c\left(y\right)\right|^{2}\leq B\left\Vert c\right\Vert _{l^{2}\left(V\right)}^{2}$,
for all $c\in l^{2}\left(V\right)$;
\item $A\left\Vert f\right\Vert _{\mathscr{H}_{K}}^{2}\leq\sum_{x\in V}\left|f\left(x\right)\right|^{2}\leq B\left\Vert f\right\Vert _{\mathscr{H}\left(K\right)}^{2}$,
for all \textup{$f\in\mathscr{H}_{K}$.}
\end{enumerate}
\end{prop}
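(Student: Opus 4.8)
The plan is to realize $K_{V}$ as $TT^{*}$ for the natural ``sampling'' operator attached to $V$, and to read conditions (1) and (2) off the polar structure of $T$. Put $T\colon\mathrm{span}\left\{ K_{x}:x\in V\right\} \longrightarrow l^{2}\left(V\right)$, $Tf=\left(f\left(x\right)\right)_{x\in V}=\left(\left\langle K_{x},f\right\rangle _{\mathscr{H}_{K}}\right)_{x\in V}$; hypothesis (\ref{eq:rp3a}) says exactly that $TK_{y}\in l^{2}\left(V\right)$ for each $y\in V$, so $T$ is well defined on this domain (dense in $\overline{\mathrm{span}}\left\{ K_{x}:x\in V\right\}$), with formal adjoint the synthesis map $T^{*}c=\sum_{x\in V}c\left(x\right)K_{x}$. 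From the reproducing property,
\[
\left(TT^{*}c\right)\left(x\right)=\left(T^{*}c\right)\left(x\right)=\Big\langle K_{x},\sum_{y\in V}c\left(y\right)K_{y}\Big\rangle _{\mathscr{H}_{K}}=\sum_{y\in V}K\left(x,y\right)c\left(y\right)=\left(K_{V}c\right)\left(x\right),
\]
so $K_{V}=TT^{*}$, whereas $T^{*}T$ is the operator on $\mathscr{H}_{K}$ with $\left\langle T^{*}Tf,f\right\rangle _{\mathscr{H}_{K}}=\left\Vert Tf\right\Vert _{l^{2}\left(V\right)}^{2}=\sum_{x\in V}\left|f\left(x\right)\right|^{2}$, i.e.\ the frame operator of $\left\{ K_{x}\right\} _{x\in V}$. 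In these terms, (1) reads $A\left\Vert c\right\Vert ^{2}\le\left\Vert K_{V}c\right\Vert ^{2}\le B\left\Vert c\right\Vert ^{2}$, and (2) says $\left\{ K_{x}\right\} _{x\in V}$ is a frame for $\mathscr{H}_{K}$ with bounds $A,B$.

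\textbf{Equivalence of the left side and (1).} This is the clean half, and uses only that $K_{V}$ is formally symmetric and positive: the inner sum defining $\left(K_{V}c\right)\left(x\right)$ converges absolutely for every $c\in l^{2}\left(V\right)$ by Cauchy--Schwarz and (\ref{eq:rp3a}), and $\left\langle K_{V}c,c\right\rangle =\sum_{x\in V}\sum_{y\in V}\overline{c\left(x\right)}c\left(y\right)K\left(x,y\right)\ge0$ by (\ref{eq:rp1}). If $K_{V}$ is bounded with bounded inverse it is a bounded positive self-adjoint operator with $A I\le K_{V}\le B I$ for suitable $0<A\le B$, hence $A^{2}I\le K_{V}^{2}\le B^{2}I$ by the spectral mapping theorem, which is (1). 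Conversely, the upper bound in (1) extends $K_{V}$ to a bounded (hence self-adjoint, positive) operator on $l^{2}\left(V\right)$, and the lower bound makes it bounded below; a bounded-below self-adjoint operator is injective with closed range, and its range, being the orthogonal complement of its kernel, is all of $l^{2}\left(V\right)$, so $K_{V}$ is boundedly invertible.

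\textbf{Equivalence with (2).} The natural route here is again through $TT^{*}=K_{V}$ and $T^{*}T$. Once one of the two is bounded with bounded inverse, $T$ is bounded (since $\left\Vert T\right\Vert ^{2}=\left\Vert TT^{*}\right\Vert =\left\Vert T^{*}T\right\Vert$), and via the polar decomposition $T=U\left|T\right|$ one has $\sigma\left(T^{*}T\right)\setminus\left\{ 0\right\} =\sigma\left(TT^{*}\right)\setminus\left\{ 0\right\}$, so the two-sided bounds in (1) and in (2) pin the nonzero part of $\sigma\left(\left|T\right|\right)$ into the same interval. What is left is to match the kernels: (1) forces $\ker T^{*}=\ker\left(TT^{*}\right)=0$ and, since $K_{V}$ is onto, $T$ onto; (2) forces $T$ injective with closed range, hence $T^{*}$ onto. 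I would organize the argument so that, from whichever lower bound is assumed, one first upgrades to the statement that $T$ is a boundedly invertible map of $\mathscr{H}_{K}$ onto $l^{2}\left(V\right)$ --- equivalently that $\left\{ K_{x}\right\} _{x\in V}$ is a Riesz basis for $\mathscr{H}_{K}$ --- and only then transfers the constants between $T^{*}T$ and $TT^{*}$.

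\textbf{Main obstacle.} I expect the kernel/surjectivity matching in the last step to be the crux, and to be where any additional hypotheses of the section must be used. A two-sided frame bound for $\left\{ K_{x}\right\} _{x\in V}$ controls $T^{*}T$ on $\mathscr{H}_{K}$, but to conclude that the $l^{2}\left(V\right)$-operator $K_{V}=TT^{*}$ is invertible one needs $T$ onto; dually, invertibility of $K_{V}$ does not by itself exclude $\ker T\neq0$. So the real work is to show that, under the assumptions in force, the relevant lower bound promotes a one-sided statement ($T$ bounded below, or $T^{*}$ bounded below) to two-sided invertibility of $T$ --- equivalently, that $V$ is a set of uniqueness for $\mathscr{H}_{K}$ and $\left\{ K_{x}\right\} _{x\in V}$ is total in it (or, if one prefers, that condition (2) is read inside the RKHS $\mathscr{H}_{K_{V}}$ of the restricted kernel). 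After that, the equivalence of the left-hand side, (1), and (2) is a formality from $\sigma\left(TT^{*}\right)\setminus\left\{ 0\right\} =\sigma\left(T^{*}T\right)\setminus\left\{ 0\right\}$.
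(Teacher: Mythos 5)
Your factorization is the paper's own: the paper defines the sampling operator $T_{V}f=\left(f\left(x\right)\right)_{x\in V}$ with dense domain $\mathrm{span}\left\{ K\left(\cdot,x\right):x\in V\right\} $, identifies $T_{V}^{*}c=\sum_{y\in V}c\left(y\right)K\left(\cdot,y\right)$, verifies $T_{V}T_{V}^{*}=K_{V}$ using (\ref{eq:rp3a}), and then invokes $\left\Vert T\right\Vert ^{2}=\left\Vert T^{*}T\right\Vert =\left\Vert TT^{*}\right\Vert $; your treatment of the equivalence between invertibility and condition (1) is complete and correct. The one structural difference is that the paper runs the argument in $\mathscr{H}\left(K_{V}\right)$, the RKHS of the restricted kernel $K|_{V\times V}$, rather than in $\mathscr{H}_{K}$ --- exactly the reading you propose parenthetically. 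That choice settles half of your ``main obstacle'' for free: elements of $\mathscr{H}\left(K_{V}\right)$ are functions on $V$, so $T_{V}$ is automatically injective, and $\mathrm{ran}\left(T_{V}^{*}\right)$ contains the kernel functions and is dense; hence if $K_{V}=T_{V}T_{V}^{*}$ is boundedly invertible, then $\sigma\left(T_{V}^{*}T_{V}\right)\setminus\left\{ 0\right\} =\sigma\left(T_{V}T_{V}^{*}\right)\setminus\left\{ 0\right\} \subset\left[A,B\right]$, and $0$, being an isolated point, could enter $\sigma\left(T_{V}^{*}T_{V}\right)$ only as an eigenvalue, which injectivity excludes; this gives (2) read in $\mathscr{H}\left(K_{V}\right)$. (Read in $\mathscr{H}_{K}$ itself, the lower bound additionally requires $V$ to be a set of uniqueness, as you note.)

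The remaining half --- from the frame inequality (2) back to invertibility of $K_{V}$ --- is a genuine gap, and you are right that it is not a formality: (2) makes $T_{V}$ bounded below with closed range, but does not make it onto, i.e.\ $\ker\left(T_{V}^{*}\right)=\left(\mathrm{ran}\,T_{V}\right)^{\perp}$ can be nonzero. Concretely, take $X=V=\mathbb{N}$ with $K\left(1,1\right)=K\left(1,2\right)=K\left(2,1\right)=K\left(2,2\right)=1$, $K\left(n,n\right)=1$ for $n\geq3$, and all other entries $0$ (the Gram matrix of $v_{1}=v_{2}=e_{1}$, $v_{n}=e_{n-1}$): then (\ref{eq:rp3a}) holds, condition (2) holds with $A=1$, $B=2$, yet $K_{V}\left(e_{1}-e_{2}\right)=0$, so $K_{V}$ has no bounded inverse. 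Closing your last step therefore needs an extra hypothesis --- e.g.\ that $\left\{ K\left(\cdot,x\right)\right\} _{x\in V}$ is linearly independent/minimal, so the frame is in fact a Riesz basis and $T_{V}$ is onto (compare the standing assumption of Section \ref{sec:fp}) --- or else ``bounded inverse'' must be understood on $\overline{\mathrm{ran}}\,K_{V}$. Be aware that the paper's own proof stops at the identity $T_{V}T_{V}^{*}=K_{V}$ and does not supply this step either; so your plan is the paper's plan, part (1) is fine, and the point you flag as the crux is precisely where the statement itself requires the extra proviso.
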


\begin{proof}
From general facts about operators in Hilbert space, we have the equivalence
of the following conditions on a densely defined operator $\mathscr{H}_{1}\xrightarrow{\;T\;}\mathscr{H}_{2}$
acting from one Hilbert space $\mathscr{H}_{1}$ into another. First
$T$ is said to be bounded iff 
\begin{equation}
\left\Vert T\right\Vert _{\mathscr{H}_{1}\rightarrow\mathscr{H}_{2}}=\sup_{u\in\mathscr{H}_{1},\left\Vert u\right\Vert _{1}\leq1}\left\Vert Tu\right\Vert _{2}<\infty.\label{eq:rp6}
\end{equation}
Note that 
\begin{equation}
\left\Vert T^{*}T\right\Vert _{\mathscr{H}_{1}\rightarrow\mathscr{H}_{1}}=\left\Vert TT^{*}\right\Vert _{\mathscr{H}_{2}\rightarrow\mathscr{H}_{2}}=\left\Vert T\right\Vert _{\mathscr{H}_{1}\rightarrow\mathscr{H}_{2}}^{2}.\label{eq:rp7}
\end{equation}

Let $K$, $X$, $V\subset X$ be as specified in the proposition,
and consider 
\begin{equation}
span\left\{ K\left(\cdot,x\right):x\in V\right\} \label{eq:rp8}
\end{equation}
as a dense subspace in $\mathscr{H}\left(K_{V}\right)\coloneqq\text{RKHS}(K|_{V\times V})$.
Then we have a naturally defined operator $\mathscr{H}\left(K_{V}\right)\rightarrow l^{2}\left(V\right)$
with (\ref{eq:rp8}) as its dense domain: Set $T_{V}:\mathscr{H}\left(K_{V}\right)\rightarrow l^{2}\left(V\right)$,
\begin{equation}
T=T_{V}:\mathscr{H}\left(K_{V}\right)\ni f\longmapsto\left(f\left(x\right)\right)_{x\in V}\in l^{2}\left(V\right).\label{eq:rp9}
\end{equation}

Considering the respective inner products in the two Hilbert spaces
$\mathscr{H}\left(K_{V}\right)$ and $l^{2}\left(V\right)$, it follows
that the adjoint operator $T_{V}^{*}:l^{2}\left(V\right)\rightarrow\mathscr{H}\left(K_{V}\right)$,
to (\ref{eq:rp9}) is well defined with dense domain $l_{0}^{2}\left(V\right)$,
and 
\[
\left(T_{V}^{*}\left(c\right)\right)\left(x\right)=\sum_{y\in V}K_{V}\left(x,y\right)c\left(y\right),\quad\forall c\in l_{0}^{2}\left(V\right).
\]

Now the conclusion of the proposition follows once we verify that
the operator 
\begin{equation}
T_{V}T_{V}^{*}:l^{2}\left(V\right)\longrightarrow l^{2}\left(V\right)\label{eq:rp11}
\end{equation}
(possibly unbounded) is given by the $\infty\times\infty$ matrix
$K_{V}\coloneqq K\big|_{V\times V}$. Indeed, for $c\in l_{0}^{2}\left(V\right)$
we have the following computation of $T_{V}T_{V}^{*}$ in (\ref{eq:rp11}):
\begin{equation}
l_{0}^{2}\left(V\right)\ni c\xrightarrow{\quad T_{V}^{*}\quad}\sum_{y\in V}c\left(y\right)K\left(\cdot,y\right)\;\text{(as a function on \ensuremath{V});}\label{eq:rp12}
\end{equation}
and so 
\begin{equation}
\left(T_{V}T_{V}^{*}c\right)\left(x\right)=\sum_{y\in V}K\left(x,y\right)c\left(y\right),\label{eq:rp13}
\end{equation}
which is the desired conclusion. Note that we used that, for fixed
$y$, we have 
\begin{equation}
T_{V}(\underset{\in\mathscr{H}_{K}}{\underbrace{K\left(\cdot,y\right)}})=\left(K\left(x,y\right)\right)_{x\in V}\in l^{2}\left(V\right).\label{eq:rp14}
\end{equation}
See assumption (\ref{eq:rp3a}) above. 
\end{proof}
\begin{cor}
Let $K$, $X$, and $V$ be as specified. Denote by $K_{V}$ the operator
in $l^{2}\left(V\right)$ which have $K\left(\cdot,\cdot\right)\big|_{V\times V}$
as its $\infty\times\infty$ matrix realization, see \propref{rp1}. 

Then there are constants $A$, $B$, $0<A\leq B<\infty$ (depending
on $V$) such that 
\[
A\left\Vert f\right\Vert _{\mathscr{H}\left(K_{V}\right)}^{2}\leq\sum_{x\in V}\left|f\left(x\right)\right|^{2}\leq B\left\Vert f\right\Vert _{\mathscr{H}\left(K_{V}\right)}^{2}
\]
for all $f\in\mathscr{H}\left(K_{V}\right)$, if and only if both
$K_{V}$ and $\left(K_{V}\right)^{-1}$ define bounded operators in
$l^{2}\left(V\right)$. 
\end{cor}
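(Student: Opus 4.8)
The plan is to recognize the statement as \propref{rp1} applied to the restricted kernel $K_{V}=K|_{V\times V}$, now regarded as a positive definite kernel on the set $V$ itself, with the \emph{trivial} discrete subset $V\subseteq V$. With $X$ replaced by $V$ and the countable subset also taken to be $V$, the second of the two equivalent conditions in \propref{rp1} becomes verbatim the double inequality displayed above, because the RKHS of $(K_{V})|_{V\times V}=K_{V}$ is exactly $\mathscr{H}(K_{V})$; and the conclusion of \propref{rp1}, that $K_{V}$ is a bounded operator on $l^{2}(V)$ with bounded inverse, is exactly the statement that both $K_{V}$ and $(K_{V})^{-1}$ define bounded operators in $l^{2}(V)$. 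Before invoking \propref{rp1} one checks that its standing hypothesis (\ref{eq:rp3a}) holds for the triple $(K_{V},V,V)$, but this is immediate: for $y\in V$, $\sum_{x\in V}|K_{V}(x,y)|^{2}=\sum_{x\in V}|K(x,y)|^{2}\le C_{y}^{(V)}<\infty$ by the hypothesis already imposed on $(K,X,V)$.

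For a self-contained argument I would instead rerun the mechanism from the proof of \propref{rp1}. Let $T_{V}\colon\mathscr{H}(K_{V})\to l^{2}(V)$ be the densely defined operator $f\mapsto(f(x))_{x\in V}$ with domain $\mathrm{span}\{K(\cdot,x):x\in V\}$; as computed there, $T_{V}T_{V}^{*}$ is the operator on $l^{2}(V)$ with matrix $K_{V}$. Since $\sum_{x\in V}|f(x)|^{2}=\|T_{V}f\|_{l^{2}(V)}^{2}$, the displayed inequality is the assertion $A\|f\|_{\mathscr{H}(K_{V})}^{2}\le\|T_{V}f\|_{l^{2}(V)}^{2}\le B\|f\|_{\mathscr{H}(K_{V})}^{2}$ on a dense domain. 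The right-hand bound holds for some $B<\infty$ iff $T_{V}$ is bounded, hence, via $\|T_{V}T_{V}^{*}\|=\|T_{V}\|^{2}$, iff $K_{V}$ is bounded; the left-hand bound holds for some $A>0$ iff $T_{V}^{*}T_{V}\ge AI$ on $\mathscr{H}(K_{V})$, which, using that $T_{V}$ is injective (elements of an RKHS are genuine functions) and that $\mathrm{ran}\,T_{V}^{*}$ is dense in $\mathscr{H}(K_{V})$, is equivalent to $K_{V}=T_{V}T_{V}^{*}$ being bounded below on $l^{2}(V)$, i.e.\ to $(K_{V})^{-1}$ being bounded.

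The step I expect to cost the most care is that last equivalence: one must rule out that spectral mass of $K_{V}$ sits at or accumulates toward $0$ while the lower frame bound persists. This is exactly where injectivity of $T_{V}$ and density of $\mathrm{ran}\,T_{V}^{*}$ are needed, so that the nonzero parts of the spectra of $T_{V}T_{V}^{*}$ and $T_{V}^{*}T_{V}$ agree and the lower bound on $T_{V}^{*}T_{V}$ forces $0$ out of the spectrum of $K_{V}$. Everything else — closability of $T_{V}$, the formula for $T_{V}^{*}$, and the identification of $T_{V}T_{V}^{*}$ with the matrix $K_{V}$ — is already carried out in the proof of \propref{rp1}, so the corollary is genuinely immediate from it.
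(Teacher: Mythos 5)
Your first paragraph is the paper's own route: the corollary carries no separate proof in the paper and is intended simply as \propref{rp1} read for the restricted kernel $K_{V}=K|_{V\times V}$ on the set $V$, with $T_{V}T_{V}^{*}$ realized by the matrix $K_{V}$; that reduction, including the check of (\ref{eq:rp3a}) for the triple $(K_{V},V,V)$, matches the paper.

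Your self-contained argument, however, has a genuine gap at exactly the step you single out. From the lower sampling bound you get $T_{V}^{*}T_{V}\geq A$ on $\mathscr{H}(K_{V})$, and since the nonzero spectra of $T_{V}^{*}T_{V}$ and $T_{V}T_{V}^{*}$ always agree, $\mathrm{spec}(K_{V})\subseteq\{0\}\cup[A,B]$. But whether $0$ survives in $\mathrm{spec}(K_{V})$ is governed by $\ker T_{V}^{*}=(\operatorname{ran}T_{V})^{\perp}\subseteq l^{2}(V)$, and the two facts you invoke --- injectivity of $T_{V}$ and density of $\operatorname{ran}T_{V}^{*}$ in $\mathscr{H}(K_{V})$ --- control the $\mathscr{H}(K_{V})$ side only; they say nothing about $\ker T_{V}^{*}$. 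What is actually needed is that $\operatorname{ran}T_{V}$ be dense in $l^{2}(V)$, i.e.\ that $\{K_{V}(\cdot,x)\}_{x\in V}$ be an $l^{2}$-independent (Riesz) system and not merely a frame in $\mathscr{H}(K_{V})$, and this does not follow from the displayed inequalities. Concretely, let $V=\mathbb{N}$ and let $K$ be the Gram kernel of the sequence $u_{1},u_{1},u_{2},u_{2},\dots$ built from an orthonormal basis, so that $K(2k-1,2k-1)=K(2k,2k)=K(2k-1,2k)=K(2k,2k-1)=1$ and all other entries vanish: condition (\ref{eq:rp3a}) holds, the sampling inequality holds with $A=B=2$, yet $K_{V}$ is block diagonal with singular $2\times2$ blocks, hence has an infinite-dimensional kernel and no bounded inverse. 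So the equivalence asserted in your last step cannot be closed with the facts you cite; it requires the additional hypothesis $\overline{\operatorname{ran}}\,T_{V}=l^{2}(V)$ (equivalently, $l^{2}$-independence of the sampled kernel functions), a caveat that also has to be kept in mind when quoting condition (2) of \propref{rp1} as in your first paragraph.
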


\section{\label{sec:fp}Finite point-configurations governed by p.d. kernels}

Given $X\times X\xrightarrow{\;K\;}\mathbb{R}$ a fixed positive definite
(p.d.) kernel, we consider the case such that for every finite subset
$F\subset X$, the restriction $K_{F}\coloneqq K\left(\cdot,\cdot\right)\big|_{F\times F}$
is invertible. Denote by $K_{F}^{-1}$ the inverse matrix. 
\begin{lem}
Let $P_{F}\coloneqq$ the $\mathscr{H}_{K}$-orthogonal projection
onto 
\begin{equation}
\mathscr{H}_{K}\left(F\right)\coloneqq span\left\{ K\left(\cdot,x\right)\right\} _{x\in F}\label{eq:f1}
\end{equation}
then 
\begin{equation}
\left(P_{F}f\right)\left(\cdot\right)=\sum_{x\in F}K_{F}^{-1}\left(f\big|_{F}\right)_{x}K\left(\cdot,x\right).\label{eq:f2}
\end{equation}
Note $f\big|_{F}\coloneqq\begin{bmatrix}f\left(x_{1}\right) & f\left(x_{2}\right) & \cdots & f\left(x_{n}\right)\end{bmatrix}^{T}$
as a column vector, if $F=\left\{ x_{1},x_{2},\cdots,x_{n}\right\} $,
and so $K_{F}^{-1}\left(f\big|_{F}\right)$ refers to matrix multiplication
\begin{equation}
K_{F}^{-1}\left(f\big|_{F}\right)=\sum_{y\in F}\left(K_{F}^{-1}\right)_{xy}f\left(y\right).\label{eq:f3}
\end{equation}
\end{lem}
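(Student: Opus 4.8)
The plan is to verify directly that the operator on the right-hand side of \eqref{eq:f2} is the orthogonal projection onto $\mathscr{H}_{K}(F)$, using only the reproducing property \eqref{eq:a6} and the invertibility of $K_{F}$. First I would record two preliminaries. Since $F$ is finite, $\mathscr{H}_{K}(F)$ is a finite-dimensional, hence closed, subspace of $\mathscr{H}_{K}$, so the orthogonal projection $P_{F}$ onto it exists and is unique. Since $K_{F}$ is invertible, the vectors $\{K(\cdot,x)\}_{x\in F}$ are linearly independent, so $\dim\mathscr{H}_{K}(F)=|F|$ and the matrix expression $K_{F}^{-1}(f|_{F})$ appearing in \eqref{eq:f2}--\eqref{eq:f3} is well defined for every $f\in\mathscr{H}_{K}$.

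Write $Qf:=\sum_{x\in F}\bigl(K_{F}^{-1}(f|_{F})\bigr)_{x}\,K(\cdot,x)$ for the candidate operator. By construction $Qf\in\mathscr{H}_{K}(F)$, so by uniqueness of the orthogonal decomposition $\mathscr{H}_{K}=\mathscr{H}_{K}(F)\oplus\mathscr{H}_{K}(F)^{\perp}$ it suffices to prove $f-Qf\perp\mathscr{H}_{K}(F)$; and since $\{K(\cdot,y)\}_{y\in F}$ spans $\mathscr{H}_{K}(F)$, it is enough to check $\langle K(\cdot,y),\,f-Qf\rangle_{\mathscr{H}_{K}}=0$ for each $y\in F$.

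The computation is then immediate from the reproducing property: on the one hand $\langle K(\cdot,y),f\rangle_{\mathscr{H}_{K}}=f(y)$, and on the other hand
\[
\langle K(\cdot,y),\,Qf\rangle_{\mathscr{H}_{K}}=\sum_{x\in F}\bigl(K_{F}^{-1}(f|_{F})\bigr)_{x}\,\langle K(\cdot,y),K(\cdot,x)\rangle_{\mathscr{H}_{K}}=\sum_{x\in F}K(y,x)\bigl(K_{F}^{-1}(f|_{F})\bigr)_{x},
\]
which is precisely the $y$-th entry of the matrix product $K_{F}\bigl(K_{F}^{-1}(f|_{F})\bigr)=f|_{F}$, namely $f(y)$. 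Hence $\langle K(\cdot,y),\,f-Qf\rangle_{\mathscr{H}_{K}}=0$ for all $y\in F$, so $Q=P_{F}$, which is \eqref{eq:f2}.

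I do not expect a genuine obstacle here; the only point requiring care is the index bookkeeping — reading $\sum_{x\in F}K(y,x)\bigl(K_{F}^{-1}(f|_{F})\bigr)_{x}$ as the matrix identity $K_{F}K_{F}^{-1}(f|_{F})=f|_{F}$ — together with the reminder that $K$ is real-valued and symmetric, so no complex conjugation enters the inner products. An equivalent presentation, which makes the role of invertibility more transparent, is to start from the ansatz $P_{F}f=\sum_{x\in F}c_{x}K(\cdot,x)$, impose the orthogonality relations $\langle K(\cdot,y),f-P_{F}f\rangle_{\mathscr{H}_{K}}=0$ for all $y\in F$, obtain the linear system $\sum_{x\in F}K(y,x)c_{x}=f(y)$, i.e. $K_{F}c=f|_{F}$, and solve $c=K_{F}^{-1}(f|_{F})$; this recovers \eqref{eq:f2}.
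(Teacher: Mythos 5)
Your proof is correct, and it takes a genuinely different route from the paper's. The paper proves formula \eqref{eq:f2} by checking directly that the right-hand side defines an orthogonal projection with range $\mathscr{H}_{K}\left(F\right)$: Step 1 establishes $P_{F}^{*}=P_{F}$ by reducing it to the symmetry $\left(K_{F}^{-1}\right)_{xy}=\left(K_{F}^{-1}\right)_{yx}$, which follows from $K\left(x,y\right)=K\left(y,x\right)$; Step 2 establishes $P_{F}^{2}=P_{F}$ by applying the formula twice and using the reproducing property to collapse the double sum. You instead verify the characterizing property of the orthogonal projection onto a closed subspace: your candidate $Qf$ lies in $\mathscr{H}_{K}\left(F\right)$ by construction, and the residual $f-Qf$ is orthogonal to each $K\left(\cdot,y\right)$, $y\in F$, because $\left\langle K\left(\cdot,y\right),Qf\right\rangle _{\mathscr{H}_{K}}$ is the $y$-th entry of $K_{F}K_{F}^{-1}\left(f\big|_{F}\right)=f\big|_{F}$, i.e.\ $f\left(y\right)$, matching $\left\langle K\left(\cdot,y\right),f\right\rangle _{\mathscr{H}_{K}}=f\left(y\right)$. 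Your route is shorter, reduces everything to the single matrix identity $K_{F}K_{F}^{-1}=I$, delivers self-adjointness and idempotency as automatic corollaries of $Q=P_{F}$, and (via your closing ansatz) makes transparent exactly where invertibility of $K_{F}$ enters — solving the normal equations $K_{F}c=f\big|_{F}$. The paper's route, by contrast, exhibits the operator identities $P_{F}^{*}=P_{F}$ and $P_{F}^{2}=P_{F}$ explicitly (and the symmetry of $K_{F}^{-1}$), which is instructive in its own right and does not invoke uniqueness of the orthogonal decomposition; your preliminaries on invertibility versus linear independence of $\left\{ K\left(\cdot,x\right)\right\} _{x\in F}$ mirror the remark the paper places after its proof.
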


\begin{proof}
\textbf{Step 1.} $P_{F}^{*}=P_{F}$. Indeed, 
\begin{eqnarray}
P_{F}^{*} & = & P_{F}\nonumber \\
 & \Updownarrow\nonumber \\
\left\langle K_{F}^{-1}f\big|_{F},g\right\rangle _{\mathscr{H}_{K}} & = & \left\langle f,K_{F}^{-1}g\big|_{F}\right\rangle _{\mathscr{H}_{K}}\nonumber \\
 & \Updownarrow\nonumber \\
\underset{F\times F}{\sum\sum}\left(K_{F}^{-1}\right)_{xy}f\left(y\right)g\left(x\right) & = & \underset{F\times F}{\sum\sum}f\left(x\right)\left(K_{F}^{-1}\right)_{xy}g\left(y\right)\nonumber \\
 & \Updownarrow\nonumber \\
\left(K_{F}^{-1}\right)_{xy} & = & \left(K_{F}^{-1}\right)_{yx}\label{eq:f5}
\end{eqnarray}
but (\ref{eq:f5}) follows from the fact that $K$ is p.d. and so
in particular $K\left(x,y\right)=K\left(y,x\right)$, i.e., symmetric. 
\begin{rem*}
If $K$ is complex valued, the same conclusion holds but with $K\left(x,y\right)=\overline{K\left(y,x\right)}$,
$\forall x,y\in X$.
\end{rem*}
\textbf{Step 2.} $P_{F}^{2}=P_{F}$. Now this follows from the definition
(\ref{eq:f2}) of the finite rank operator $P_{F}:\mathscr{H}_{F}\rightarrow\mathscr{H}_{F}$.
So we iterate the formula (\ref{eq:f2}) which defines the action
of $P_{F}$: 
\begin{eqnarray*}
f & \xrightarrow{\;P_{F}\;} & \sum_{x\in F}\left(K_{F}^{-1}\right)\left(f\big|_{F}\right)_{x}K\left(\cdot,x\right)\\
 & \xrightarrow{\;P_{F}\;} & \underset{F\times F}{\sum\sum}K_{F}^{-1}\left(f\big|_{F}\right)_{x}\underset{\delta_{xy}}{\underbrace{\left(K_{F}^{-1}\right)_{y}K\left(y,x\right)K\left(\cdot,y\right)}}\\
 & = & \underset{F\times F}{\sum\sum}K_{F}^{-1}\left(f\big|_{F}\right)_{x}\delta_{xy}K\left(\cdot,y\right)\\
 & = & \sum_{F}K_{F}^{-1}\left(f\big|_{F}\right)_{x}K\left(\cdot,x\right)\\
 & \underset{\text{by \ensuremath{\left(\ref{eq:f2}\right)}}}{=} & \left(P_{F}f\right)\left(\cdot\right)
\end{eqnarray*}
so $P_{F}^{2}=P_{F}$, and we proved that $P_{F}$ is the desired
finite rank projection in $\mathscr{H}_{K}$ onto the subspace $\mathscr{H}_{K}\left(F\right)$. 
\begin{rem*}
We work with the case that for every $F\subset X$ (discrete points)
\begin{equation}
K_{F}=K\left(\cdot,\cdot\right)\big|_{F\times F}\;\text{is an invertible matrix.}\label{eq:f6}
\end{equation}
But 
\begin{equation}
\left(\ref{eq:f6}\right)\Longleftrightarrow\left\{ K\left(\cdot,x_{1}\right),\cdots,K\left(\cdot,x_{n}\right)\right\} \;\text{is linearly independent},\label{eq:f7}
\end{equation}
where $F=\left(x_{i}\right)$, $x_{i}\neq x_{j}$ if $i\neq j$. To
see this, fix $c=\left(c_{1},\cdots,c_{n}\right)$, then 
\begin{gather*}
\sum_{i}c_{i}K\left(\cdot,x_{i}\right)=0\;\text{in \ensuremath{\mathscr{H}_{K}}}\\
\Updownarrow\\
\left\Vert \sum c_{i}K\left(\cdot,x_{i}\right)\right\Vert _{\mathscr{H}_{K}}^{2}=0\\
\Updownarrow\\
\underset{F\times F}{\sum\sum}c_{i}c_{j}K\left(x_{i},x_{j}\right)=0\\
\Updownarrow\\
c^{T}K_{F}c=0\Longleftrightarrow c=0
\end{gather*}
and so (\ref{eq:f7}) holds, i.e., linear independence.
\end{rem*}
\end{proof}
\begin{cor}
\label{cor:psin}If $F_{0}=\left\{ x_{0}\right\} $ is a singleton,
then 
\[
\left(P_{F_{0}}f\right)\left(\cdot\right)=\frac{f\left(x_{0}\right)}{K\left(x_{0},x_{0}\right)}K\left(\cdot,x_{0}\right),
\]
and 
\[
\underset{\underset{x_{n}}{\downarrow}}{P_{n}}\cdots\underset{\underset{x_{2}}{\downarrow}}{P_{2}}\underset{\underset{x_{1}}{\downarrow}}{P_{1}}\underset{\underset{x_{0}}{\downarrow}}{P_{0}}:f\longmapsto f\left(x_{0}\right)\frac{K\left(x_{0},x_{1}\right)K\left(x_{1},x_{2}\right)\cdots K\left(x_{n},\cdot\right)}{K\left(x_{0},x_{0}\right)K\left(x_{1},x_{1}\right)\cdots K\left(x_{n},x_{n}\right)}.
\]
\end{cor}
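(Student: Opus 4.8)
The plan is to read off both formulas directly from the identity \eqref{eq:f2} of the preceding lemma, first in the singleton case and then by iteration. For $F_{0}=\left\{ x_{0}\right\} $, the matrix $K_{F_{0}}$ is the $1\times1$ matrix $\left[K\left(x_{0},x_{0}\right)\right]$, which is invertible by the standing hypothesis \eqref{eq:f6} (equivalently, $K\left(\cdot,x_{0}\right)\neq0$ in $\mathscr{H}_{K}$), so $K_{F_{0}}^{-1}=\left[K\left(x_{0},x_{0}\right)^{-1}\right]$ and $f\big|_{F_{0}}=\left[f\left(x_{0}\right)\right]$. Substituting these into \eqref{eq:f2} gives $\left(P_{F_{0}}f\right)\left(\cdot\right)=\dfrac{f\left(x_{0}\right)}{K\left(x_{0},x_{0}\right)}K\left(\cdot,x_{0}\right)$ at once, which is the first displayed formula.

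For the composition of singleton projections, I would argue by induction on $n$. The key observation is that $P_{n}\cdots P_{1}P_{0}f$ is, at every stage, a scalar multiple of a single kernel function $K\left(\cdot,x_{j}\right)$, and that applying the next projection $P_{j+1}:=P_{\left\{ x_{j+1}\right\} }$ to such a vector merely re-evaluates the coefficient: by the singleton formula just proved, $P_{j+1}\bigl(c\,K\left(\cdot,x_{j}\right)\bigr)=\dfrac{c\,K\left(x_{j+1},x_{j}\right)}{K\left(x_{j+1},x_{j+1}\right)}K\left(\cdot,x_{j+1}\right)$. Starting from $P_{0}f=\dfrac{f\left(x_{0}\right)}{K\left(x_{0},x_{0}\right)}K\left(\cdot,x_{0}\right)$ and applying this step $n$ times, each application multiplies the running scalar by $K\left(x_{j},x_{j+1}\right)/K\left(x_{j+1},x_{j+1}\right)$ — here I use $K\left(x_{j+1},x_{j}\right)=K\left(x_{j},x_{j+1}\right)$, respectively the Hermitian symmetry noted in the remark after \eqref{eq:f5} in the complex-valued case — and advances the kernel index by one. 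The product then telescopes to $f\left(x_{0}\right)\dfrac{K\left(x_{0},x_{1}\right)K\left(x_{1},x_{2}\right)\cdots K\left(x_{n},\cdot\right)}{K\left(x_{0},x_{0}\right)K\left(x_{1},x_{1}\right)\cdots K\left(x_{n},x_{n}\right)}$, as claimed.

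There is no genuine obstacle in this argument; it is essentially a one-line specialization followed by an induction. The only points requiring a little care are the bookkeeping of the telescoping product of off-diagonal versus diagonal kernel values, and recording explicitly that every diagonal entry $K\left(x_{j},x_{j}\right)$ is nonzero — which is exactly hypothesis \eqref{eq:f6} applied to the singleton $\left\{ x_{j}\right\} $ — so that all the divisions appearing in the formulas are legitimate.
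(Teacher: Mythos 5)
Your proposal is correct and follows exactly the route the paper intends: the corollary is read off from formula (\ref{eq:f2}) of the preceding lemma by specializing to a singleton, and the product formula is the obvious iteration of that singleton case (using the symmetry $K(x,y)=K(y,x)$ and the nonvanishing of the diagonal entries guaranteed by (\ref{eq:f6})), which is why the paper states it without a separate proof. No gaps.
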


\section{\label{sec:PCA}Principal Component Analysis}

In this section we connect the projection operators of Karhunen-Lo\`eve
transform or Principal Component Analysis (PCA) to the projections
operators in sections \ref{sec:fp} and \ref{sec:kac}. These operator
are from \cite{MR2362796,MR2459323,jorgensen2019dimension} where
in computing probabilities and variance, Hilbert spaces was used.
For example, unit vector $f$ was taken in some fixed Hilbert space
$\mathcal{H}$, and an orthonormal basis (ONB) $\psi_{i}$ with $i$
running over an index set $I$. Then two families of probability measures,
were used where one family $P_{f}(\cdot)$ indexed by $f\in\mathcal{H}$,
and a second family $P_{T}$ indexed by a class of operators $T:\mathcal{H}\to\mathcal{H}$.
\begin{defn}
\label{D:H} Let $\mathcal{H}$ be a Hilbert space. Let $(\psi_{i})$
and $(\phi_{i})$ be orthonormal bases (ONB), with index set $I$.
Usually 
\begin{equation}
I=\mathbb{N}=\{1,2,...\}.\label{E:index}
\end{equation}
If $(\psi_{i})_{i\in I}$ is an ONB, we set $Q_{n}:=$ the orthogonal
projection onto $span\{\psi_{1},...,\psi_{n}\}$.
\end{defn}

\begin{prop}
Consider an ensemble of a large number $N$ of objects of similar
type such as a set of data, of which $Nw^{\alpha}$, $\alpha=1,2,...,\nu$
where the relative frequency $w^{\alpha}$ satisfies the probability
axioms: 
\[
w^{\alpha}\geq0,\quad\sum_{\alpha=1}^{\nu}w^{\alpha}=1.
\]

Assume that each type specified by a value of the index $\alpha$
is represented by $f^{\alpha}(\xi)$ in a real domain $[a,b]$, which
we can normalize as
\[
\int_{a}^{b}|f^{\alpha}(\xi)|^{2}d\xi=1.
\]
Let $\{\psi_{i}(\xi)\}$, $i=1,2,...,$ be a complete set of orthonormal
base functions defined on $[a,b]$. Then any function (or data) $f^{\alpha}(\xi)$
can be expanded as 
\begin{equation}
f^{\alpha}(\xi)=\sum_{i=1}^{\infty}x_{i}^{(\alpha)}\psi_{i}(\xi)\label{eq:onbrep}
\end{equation}
with 
\begin{equation}
x_{i}^{\alpha}=\int_{a}^{b}\psi_{i}^{*}\left(\xi\right)f^{\alpha}\left(\xi\right)d\xi.\label{eq:xialph}
\end{equation}
Here, $x_{i}^{\alpha}$ is the component of $f^{\alpha}$ in $\psi_{i}$
coordinate system. With the normalization of $f^{\alpha}$ we have
\[
\sum_{i=1}^{\infty}|x_{i}^{\alpha}|^{2}=1.
\]
\end{prop}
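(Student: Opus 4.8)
The plan is to observe that this proposition is nothing more than the abstract orthonormal expansion in a Hilbert space, together with Parseval's identity, specialized to $\mathscr{H}=L^{2}[a,b]$ equipped with the given ONB $\left\{ \psi_{i}\right\}$. The probabilistic ensemble data $\left(w^{\alpha}\right)$ plays no role in this particular statement; it is recorded only to motivate the PCA constructions that follow.

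First I would fix the index $\alpha$ and view $f^{\alpha}$ as a vector in $\mathscr{H}=L^{2}[a,b]$, which is legitimate since $\int_{a}^{b}\left|f^{\alpha}(\xi)\right|^{2}d\xi=1<\infty$. Because $\left\{ \psi_{i}\right\} _{i\in\mathbb{N}}$ is, by hypothesis, a \emph{complete} orthonormal system, the general Hilbert-space expansion (the ONB case of the Parseval-frame lemma above, with $w_{i}=\psi_{i}$ and $A=B=1$) gives the norm-convergent series
\begin{equation}
f^{\alpha}=\sum_{i=1}^{\infty}\left\langle \psi_{i},f^{\alpha}\right\rangle _{\mathscr{H}}\psi_{i},
\end{equation}
and, unwinding the definition of the $L^{2}[a,b]$ inner product, the $i$-th coefficient equals $\int_{a}^{b}\psi_{i}^{*}(\xi)f^{\alpha}(\xi)\,d\xi$, which is exactly $x_{i}^{\alpha}$ as defined in (\ref{eq:xialph}). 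This establishes (\ref{eq:onbrep})--(\ref{eq:xialph}), with the understanding that the convergence is in $L^{2}$-norm.

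Finally, Parseval's identity for the ONB $\left\{ \psi_{i}\right\}$ yields
\begin{equation}
\left\Vert f^{\alpha}\right\Vert _{\mathscr{H}}^{2}=\sum_{i=1}^{\infty}\left|x_{i}^{\alpha}\right|^{2},
\end{equation}
and combining this with the normalization $\left\Vert f^{\alpha}\right\Vert _{\mathscr{H}}^{2}=\int_{a}^{b}\left|f^{\alpha}(\xi)\right|^{2}d\xi=1$ gives $\sum_{i=1}^{\infty}\left|x_{i}^{\alpha}\right|^{2}=1$, as asserted. There is essentially no obstacle here: the only points that merit care are (i) that the series converges in $L^{2}$ and need not converge pointwise unless one assumes additional regularity of $f^{\alpha}$, and (ii) that \emph{completeness} of $\left\{ \psi_{i}\right\}$ is precisely what upgrades Bessel's inequality $\sum_{i}\left|x_{i}^{\alpha}\right|^{2}\le1$ to an equality and makes the expansion actually reproduce $f^{\alpha}$; both are built into the hypothesis of "a complete set of orthonormal base functions."
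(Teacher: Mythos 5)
Your proposal is correct and follows essentially the same route as the paper: both rest on the standard orthonormal expansion of $f^{\alpha}$ in $L^{2}[a,b]$ with coefficients $x_{i}^{\alpha}=\langle\psi_{i},f^{\alpha}\rangle$, the normalization identity being Parseval's theorem for the complete system $\{\psi_{i}\}$. If anything, your version is stated more carefully than the paper's brief computation (you make explicit that convergence is in $L^{2}$-norm and that completeness is what turns Bessel's inequality into equality), but there is no substantive difference in approach.
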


\begin{proof}
If we substitute (\ref{eq:xialph}) into (\ref{eq:onbrep}) we have
\begin{align*}
f^{\alpha}\left(\xi\right) & =\int_{a}^{b}f^{\alpha}\left(\xi\right)\left[\sum\nolimits _{i=1}^{\infty}\psi_{i}^{*}\left(\xi\right)\psi_{i}\left(\xi\right)\right]d\xi\\
 & =\sum\nolimits _{i=1}^{\infty}\left\langle \psi_{i},f^{\alpha}\right\rangle \psi_{i}\left(\xi\right)
\end{align*}
by definition of ONB. Note this involves orthogonal projection.
\end{proof}
We here give mathematical background of PCA.

Let $\mathcal{H}=L^{2}(a,b)$. $\psi_{i}:\mathcal{H}\to\mathit{l}^{2}(\mathbb{Z})$
and $U:\mathit{l}^{2}(\mathbb{Z})\to\mathit{l}^{2}(\mathbb{Z})$ where
$U$ is a unitary operator.

Notice that the distance is invariant under a unitary transformation.
Thus, using another coordinate system (principal axis) $\{\phi_{j}\}$
in place of $\{\psi_{i}\}$, would preserve the distance. The idea
is that when PCA transform is applied on a set of data, the set of
data $\{x_{i}^{\alpha}\}$ in the feature space represented in $\{\psi_{i}\}$
basis are now represented in another coordinate system $\{\phi_{j}\}$
.

Let $\{\phi_{j}\}$, $j=1,2,...,$ be another set of orthonormal basis
(ONB) functions instead of $\{\psi_{i}(\xi)\}$, $i=1,2,...,$. Let
$y_{j}^{\alpha}$ be the component of $f^{\alpha}$ in $\{\phi_{j}\}$
where it can be expressed in terms of $x_{i}^{\alpha}$ by a linear
relation 
\[
y_{j}^{\alpha}=\sum_{i=1}^{\infty}\left\langle \phi_{j},\psi_{i}\right\rangle x_{i}^{\alpha}=\sum_{i=1}^{\infty}U_{i,j}x_{i}^{\alpha}
\]
where $U:\mathit{l}^{2}(\mathbb{Z})\to\mathit{l}^{2}(\mathbb{Z})$
is the unitary operator
\[
U_{i,j}=\langle\phi_{j},\psi_{i}\rangle=\int_{a}^{b}\phi_{j}^{*}\left(\xi\right)\psi_{i}\left(\xi\right)d\xi.
\]
Also, $x_{i}^{\alpha}$ can be written in terms of $y_{j}^{\alpha}$
under the following relation 
\[
x_{i}^{\alpha}=\sum_{j=1}^{\infty}\langle\psi_{i},\phi_{j}\rangle y_{j}^{\alpha}=\sum_{j=1}^{\infty}U_{i,j}^{-1}y_{j}^{\alpha}
\]
where $U_{i,j}^{-1}=\overline{U_{i,j}}$ and $\overline{U_{i,j}}=U_{j,i}^{*}$.
Thus,

\[
f^{\alpha}(\xi)=\sum_{i=1}^{\infty}x_{i}^{\alpha}\left(\xi\right)\psi_{i}\left(\xi\right)=\sum y_{i}^{\alpha}\left(\xi\right)\phi_{i}\left(\xi\right).
\]
So $U(x_{i})=(y_{i})$ which is coordinate change, and $\sum_{i=1}^{\infty}x_{i}^{\alpha}\psi_{i}(\xi)=\sum_{j=1}^{\infty}y_{j}^{\alpha}\phi_{j}(\xi)$,
and

\[
x_{i}^{\alpha}=\left\langle \psi_{i},f^{\alpha}\right\rangle =\int_{a}^{b}\psi_{i}^{*}\left(\xi\right)f^{(\alpha)}\left(\xi\right)d\xi.
\]

The squared magnitude $|x_{i}^{(\alpha)}|^{2}$ of the coefficient
for $\psi_{i}$ in the expansion of $f^{(\alpha)}$ can be considered
as a good measure of the average in the ensemble 
\[
Q_{i}=\sum_{\alpha=1}^{n}w^{(\alpha)}|x_{i}^{(\alpha)}|^{2},
\]
and as a measure of importance of $\{\psi_{i}\}$. Notice, 
\[
Q_{i}\geq0,\text{ }\sum_{i}Q_{i}=1.
\]
See also \cite{Wat65,jorgensen2019dimension}.

Let $G\left(\xi,\xi'\right)=\sum_{\alpha}w^{\alpha}f^{\alpha}\left(\xi\right)f^{\alpha*}\left(\xi'\right)$.
Then $G$ is a Hermitian matrix that is the covariance matrix and
$Q_{i}=G\left(i,i\right)=\sum_{\alpha}w^{\alpha}x_{i}^{\alpha}x_{i}^{\alpha*}$.
Here, $Q_{i}=G\left(i,i\right)$ is the variance and $G\left(i,j\right)$
determines the covariance between $x_{i}$ and $x_{j}$. The normalization
$\sum Q_{i}=1$ gives us $\text{trace }G=1$, where the trace means
the diagonal sum.

Then define a special function system $\{\Theta_{k}(\xi)\}$ as the
set of eigenfunctions of $G$, i.e., 
\begin{equation}
\int_{a}^{b}G\left(\xi,\xi'\right)\Theta_{k}\left(\xi'\right)d\xi'=\lambda_{k}\Theta_{k}(\xi).
\end{equation}
So $G\Theta_{k}(\xi)=\lambda_{k}\Theta_{k}(\xi)$. $Also,U:\mathit{l}^{2}(\mathbb{Z})\to\mathit{l}^{2}(\mathbb{Z})$
is the unitary operator consisting of eigenfunctions of $G$ in its
columns. These eigenfunctions represent the directions of the largest
variance of the data and the corresponding eigenvalues represent the
magnitude of the variance in the directions. PCA allows us to choose
the principal components so that the covariance matrix $G$ of the
projected data is as large as possible. The largest eigenfunction
of the covariance matrix points to the direction of the largest variance
of the data and the magnitude of this function is equal to the corresponding
eigenvalue. The subsequent eigenfunctions are always orthogonal to
the largest eigenfunctions.

\textbf{Principal eigenfunctions and detecting largest variance.}
When the data are not functions but vectors $v^{\alpha}$s whose components
are $x_{i}^{(\alpha)}$ in the $\psi_{i}$ coordinate system, we have
\begin{equation}
\sum_{i'}G\left(i,i'\right)t_{i'}^{k}=\lambda_{k}t_{i}^{k}
\end{equation}
where $t_{i}^{k}$ is the $i^{th}$ component of the vector $\Theta_{k}$
in the coordinate system $\{\psi_{i}\}$. So we get $\psi:\mathcal{H}\to(x_{i})$
and also $\Theta:\mathcal{H}\to(t_{i})$. The two ONBs result in 
\[
x_{i}^{\alpha}=\sum_{k}c_{k}^{\alpha}t_{i}^{k}\text{ for all }i,\quad c_{k}^{\alpha}=\sum_{i}t_{i}^{k*}x_{i}^{\alpha},
\]
which is the Karhunen-Lo\`eve expansion of $f^{\alpha}\left(\xi\right)$
or vector $v^{\alpha}$. Hence $\{\Theta_{k}(\xi)\}$ is the K-L coordinate
system dependent on $\{w^{\alpha}\}$ and $\{f^{\alpha}(\xi)\}$.
Then we arrange the corresponding eigenfunctions or eigenvectors in
the order of eigenvalues $\lambda_{1}\geq\lambda_{2}\geq\ldots\geq\lambda_{k-1}\geq\lambda_{k}\geq\ldots$
in the columns of $U$.

Now, $Q_{i}=G_{i,i}=\langle\psi_{i},G\psi_{i}\rangle=\sum_{k}A_{ik}\lambda_{k}$
where $A_{ik}=t_{i}^{k}t_{i}^{k*}$ which is a double stochastic matrix.
Then we have the following eigendecomposition of the covariance matrix
(operator), $G$
\begin{equation}
G=U\begin{pmatrix}\lambda_{1} & \cdots & 0\\
0 & \ddots & 0\\
0 & \cdots & \lambda_{k}
\end{pmatrix}U^{-1}.\label{eq:3.9}
\end{equation}

\subsection{\label{subsec:KLEV}Principal Component Analysis and Maximal Variance}

In this subsection, we discuss the orthonormal bases of Karhunen-Lo\`eve
transform or PCA where it captures the maximal variance in the linear
data to effectively perform dimensionality reduction. In \cite{jorgensen2019dimension}
these results to our Principal Component Analysis (PCA) on data, and
dimension reduction algorithms for both linear and nonlinear data
sets were shown. We shall recall here some definitions and results
from \cite{MR1913212,MR2362796,jorgensen2019dimension}.

The following definitions, lemmas and theorem are results from \cite{jorgensen2019dimension,MR2362796}.
Let $\mathcal{H}$ be a Hilbert space which realizes trace class $G$
as a self-adjoint operator.
\begin{defn}
\label{D:traceclass} $T\in B\left(\mathcal{H}\right)$ is said to
be trace class if and only if the series $\sum\left\langle \psi_{i},\left|T\right|\psi_{i}\right\rangle $,
with $\left|T\right|=\sqrt{T^{*}T}$, is convergent for some ONB $\left(\psi_{i}\right)$.
In this case, set 
\begin{equation}
tr\left(T\right):=\sum\left\langle \psi_{i},T\psi_{i}\right\rangle .\label{E:trace}
\end{equation}
\end{defn}

\begin{defn}
\label{def:frame}A sequence $\left(h_{\alpha}\right)_{\alpha\in A}$
in $\mathcal{H}$ is called a \emph{frame} if there are constants
$0<c_{1}\leq c_{2}<\infty$ such that

\begin{equation}
c_{1}\left\Vert f\right\Vert ^{2}\leq\sum_{\alpha\in A}\left|\left\langle h_{\alpha},f\right\rangle \right|^{2}\leq c_{2}\left\Vert f\right\Vert ^{2}\text{ for all }f\in\mathcal{H}.\label{E:framebd}
\end{equation}
Also see \cite{MR1913212,MR2367342,MR2193805,MR3928472,MR3441732,MR3085820,jorgensen2019dimension}.
\end{defn}

\begin{lem}
\label{lem:L1} Let $\left(h_{\alpha}\right)_{\alpha\in A}$ be a
frame in $\mathcal{H}$. Set $L:\mathcal{H}\to\mathit{l}^{2}$, 
\begin{equation}
L:f\mapsto\left(\left\langle h_{\alpha},f\right\rangle \right)_{\alpha\in A}.\label{E:map_L}
\end{equation}
Then $L^{*}:\mathit{l}^{2}\to\mathcal{H}$ is given by 
\begin{equation}
L^{*}((c_{\alpha}))=\sum_{\alpha\in A}c_{\alpha}h_{\alpha}\label{E:L*}
\end{equation}
where $(c_{\alpha})\in\mathit{l}^{2}$; and 
\begin{equation}
L^{*}L=\sum_{\alpha\in A}\left|h_{\alpha}\left\rangle \right\langle h_{\alpha}\right|.\label{E:L*L}
\end{equation}
\end{lem}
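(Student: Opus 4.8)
The plan is to run the standard frame-theory computation, using only the two inequalities in (\ref{E:framebd}), in fact only the upper bound. First I would verify that $L$ in (\ref{E:map_L}) is a well-defined bounded operator $\mathcal{H}\to\mathit{l}^{2}$: for $f\in\mathcal{H}$ the upper frame bound gives $\left\Vert Lf\right\Vert_{\mathit{l}^{2}}^{2}=\sum_{\alpha\in A}\left|\left\langle h_{\alpha},f\right\rangle\right|^{2}\leq c_{2}\left\Vert f\right\Vert^{2}$, so $L$ is bounded with $\left\Vert L\right\Vert\leq\sqrt{c_{2}}$. In particular $L$ has an everywhere-defined bounded adjoint $L^{*}:\mathit{l}^{2}\to\mathcal{H}$, which is the object we must compute.

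Next I would identify $L^{*}$ first on the dense subspace $\mathit{l}^{2}_{0}(A)$ of finitely supported sequences. For such $c=(c_{\alpha})$ and any $f\in\mathcal{H}$, using that the inner product is conjugate-linear in the first slot, $\left\langle L^{*}c,f\right\rangle_{\mathcal{H}}=\left\langle c,Lf\right\rangle_{\mathit{l}^{2}}=\sum_{\alpha\in A}\overline{c_{\alpha}}\left\langle h_{\alpha},f\right\rangle=\big\langle\sum_{\alpha\in A}c_{\alpha}h_{\alpha},\,f\big\rangle_{\mathcal{H}}$, all sums being finite; since $f$ is arbitrary this forces $L^{*}c=\sum_{\alpha\in A}c_{\alpha}h_{\alpha}$ on $\mathit{l}^{2}_{0}(A)$, which is (\ref{E:L*}) in that case. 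To pass to a general $(c_{\alpha})\in\mathit{l}^{2}$, put $c^{(F)}:=c\cdot\mathbbm{1}_{F}$ for finite $F\subset A$; boundedness of $L^{*}$ then gives $\left\Vert\sum_{\alpha\in F}c_{\alpha}h_{\alpha}-\sum_{\alpha\in F'}c_{\alpha}h_{\alpha}\right\Vert=\left\Vert L^{*}(c^{(F)}-c^{(F')})\right\Vert\leq\sqrt{c_{2}}\,\left\Vert c^{(F)}-c^{(F')}\right\Vert_{\mathit{l}^{2}}$, so the net of partial sums of $\sum_{\alpha}c_{\alpha}h_{\alpha}$ is Cauchy in $\mathcal{H}$ (indeed the series converges unconditionally). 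Continuity of $L^{*}$ then yields $L^{*}c=\lim_{F}L^{*}c^{(F)}=\sum_{\alpha\in A}c_{\alpha}h_{\alpha}$, establishing (\ref{E:L*}) in general. Composing, for $f\in\mathcal{H}$ we obtain $L^{*}Lf=L^{*}\big((\left\langle h_{\alpha},f\right\rangle)_{\alpha\in A}\big)=\sum_{\alpha\in A}\left\langle h_{\alpha},f\right\rangle h_{\alpha}=\big(\sum_{\alpha\in A}|h_{\alpha}\rangle\langle h_{\alpha}|\big)f$, the right-hand side being a strongly convergent sum of rank-one operators; this is (\ref{E:L*L}).

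I do not expect a genuine obstacle here; the one point that needs care is the justification that the synthesis series $\sum_{\alpha}c_{\alpha}h_{\alpha}$ converges in $\mathcal{H}$ and that it may be pulled in and out of the inner product, and this is exactly what boundedness of $L$ — hence of $L^{*}$ — via the upper frame bound provides. Everything else is bookkeeping with the inner-product convention and with limits of finite subsums. (The lower frame bound, unused above, is what would additionally guarantee that $L$ is bounded below, equivalently that $L^{*}L$ is boundedly invertible, but that is not part of the present statement.)
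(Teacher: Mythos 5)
Your proposal is correct: bounding $L$ by the upper frame constant, computing $L^{*}$ against finitely supported sequences with the conjugate-linear first slot, extending by continuity (giving unconditional convergence of the synthesis series), and composing to obtain (\ref{E:L*L}) is exactly the standard frame-theory computation. The paper itself gives no proof of Lemma \ref{lem:L1} (it is quoted from the cited references, just as the analogous Parseval-frame lemma in Section 2 is proved by appeal to the standard literature), so your argument simply supplies, correctly, the details the paper leaves to those references.
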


\begin{defn}
\label{def:frameop}Suppose we are given $(f_{\alpha})_{\alpha\in A}$,
a frame, non-negative numbers $\{w_{\alpha}\}_{\alpha\in A}$, where
$A$ is an index set, with $\|f_{\alpha}\|=1$, for all $\alpha\in A$.
\begin{equation}
G:=\sum_{\alpha\in A}w_{\alpha}\left|f_{\alpha}\left\rangle \right\langle f_{\alpha}\right|\label{eq:frame_operator}
\end{equation}
is called a \textbf{frame} operator associated to $(f_{\alpha})$.
\end{defn}

\begin{rem}
If we take vectors $(f_{\alpha})$ from a frame $(h_{\alpha})$ and
normalize them such that $h_{\alpha}=\left\Vert h_{\alpha}\right\Vert f_{\alpha}$,
and $w_{\alpha}:=\left\Vert h_{\alpha}\right\Vert ^{2}$, then $L^{*}L$
has the form (\ref{eq:frame_operator}) and it becomes to covariance
matrix G above. Thus $G=L^{*}L:\mathcal{H\to\mathcal{H}}$.
\end{rem}

\begin{lem}
\label{lem:trG}Let $G$ be as in (\ref{eq:frame_operator}). Then
$G$ is trace class if and only if $\sum_{\alpha}w_{\alpha}<\infty$;
and then 
\begin{equation}
tr\left(G\right)=\sum_{\alpha\in A}w_{\alpha}.\label{E:trG}
\end{equation}
\end{lem}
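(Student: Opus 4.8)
The plan is to compute the trace of $G$ directly from Definition \ref{D:traceclass}, using the frame vectors themselves — suitably normalized — together with the representation $G = \sum_\alpha w_\alpha |f_\alpha\rangle\langle f_\alpha|$ and its identification with $L^*L$ from \lemref{L1}. Since $G \geq 0$, we have $|G| = G$, so the defining series for trace class becomes $\sum_i \langle \psi_i, G\psi_i\rangle$ for an ONB $(\psi_i)$, and this is a series of nonnegative terms; hence convergence for one ONB is equivalent to convergence for all, and the value is ONB-independent. This lets us choose the most convenient ONB.

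First I would expand, for any ONB $(\psi_i)$,
\begin{equation}
\sum_i \langle \psi_i, G\psi_i\rangle = \sum_i \sum_{\alpha\in A} w_\alpha \left|\langle f_\alpha, \psi_i\rangle\right|^2 .
\end{equation}
Because all terms are nonnegative, Tonelli's theorem permits interchanging the two sums:
\begin{equation}
\sum_i \langle \psi_i, G\psi_i\rangle = \sum_{\alpha\in A} w_\alpha \sum_i \left|\langle f_\alpha, \psi_i\rangle\right|^2 = \sum_{\alpha\in A} w_\alpha \|f_\alpha\|^2 = \sum_{\alpha\in A} w_\alpha ,
\end{equation}
using Parseval's identity for the ONB $(\psi_i)$ and the normalization $\|f_\alpha\| = 1$. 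This single computation proves both directions at once: the left side is finite (so $G$ is trace class) precisely when $\sum_\alpha w_\alpha < \infty$, and when finite it equals $\sum_\alpha w_\alpha$, which is \eqref{E:trG}. Alternatively, one can phrase this via $G = L^*L$ from the Remark preceding the lemma, writing $\operatorname{tr}(L^*L) = \sum_\alpha \langle h_\alpha, (L^*L)^{-?}\dots\rangle$ — but the direct ONB expansion above is cleaner and avoids any circularity, so that is the route I would take.

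The only genuine subtlety — and the step I would be most careful about — is the interchange of summation and the ONB-independence of the trace value. Both are handled by the nonnegativity of every term together with Tonelli; there is no convergence obstacle and no need for absolute-convergence hypotheses since $w_\alpha \geq 0$. One should also note that when $\sum_\alpha w_\alpha = \infty$ the same computation shows $\sum_i \langle \psi_i, G\psi_i\rangle = \infty$ for every ONB, so $G$ genuinely fails to be trace class (it is not merely that some particular ONB witnesses divergence). With that remark in place the proof is complete in a few lines.
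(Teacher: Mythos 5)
Your proof is correct. Note that the paper itself does not prove this lemma: it is quoted from the earlier works \cite{jorgensen2019dimension,MR2362796}, so your argument fills in the omitted details rather than competing with a printed proof. The route you take is the standard one and is complete: since $G\geq 0$ we have $\left|G\right|=G$, the double series $\sum_{i}\sum_{\alpha}w_{\alpha}\left|\left\langle f_{\alpha},\psi_{i}\right\rangle \right|^{2}$ has nonnegative terms, Tonelli justifies the interchange, and Parseval together with $\left\Vert f_{\alpha}\right\Vert =1$ gives $\sum_{i}\left\langle \psi_{i},G\psi_{i}\right\rangle =\sum_{\alpha}w_{\alpha}$ simultaneously for both directions of the equivalence and for the trace formula \eqref{E:trG}. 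Your remarks on ONB-independence (via positivity) and on divergence holding for every ONB when $\sum_{\alpha}w_{\alpha}=\infty$ are exactly the right points to flag, and they match Definition \ref{D:traceclass}, which only demands convergence for some ONB. The one small point worth making explicit is that the termwise evaluation $\left\langle \psi_{i},G\psi_{i}\right\rangle =\sum_{\alpha}w_{\alpha}\left|\left\langle f_{\alpha},\psi_{i}\right\rangle \right|^{2}$ uses that $G$ in (\ref{eq:frame_operator}) is defined as a (weakly convergent) sum, which is guaranteed in the setting of the preceding Remark where $G=L^{*}L$ is bounded by the upper frame constant; with that observation your argument is airtight.
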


\begin{defn}
\label{D:G-op} Suppose we are given a frame operator 
\begin{equation}
G=\sum_{\alpha\in A}w_{\alpha}\left|f_{\alpha}\left\rangle \right\langle f_{\alpha}\right|\label{eq:g_op}
\end{equation}
and an ONB $(\psi_{i})$. Then for each $n$, the numbers 
\begin{equation}
E_{n}^{\psi}=\sum_{\alpha\in A}w_{\alpha}\|f_{\alpha}-\sum_{i=1}^{n}\left\langle \psi_{i},f_{\alpha}\right\rangle \psi_{i}\|^{2}\label{eq:errorterm}
\end{equation}
are called the \emph{error} or the \textit{residual} of the projection.
\end{defn}

\begin{lem}
\label{lem:error}When $(\psi_{i})$ is given, set $Q_{n}:=\sum_{i=1}^{n}\left|\psi_{i}\left\rangle \right\langle \psi_{i}\right|$
and $Q_{n}^{\bot}=I-Q_{n}$ where $I$ is the identity operator in
$\mathcal{H}$. Then (see (\ref{eq:errorterm})) 
\begin{equation}
E_{n}^{\psi}=tr(GQ_{n}^{\bot}).\label{E:error}
\end{equation}
\end{lem}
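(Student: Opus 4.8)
The identity $E_n^{\psi}=\operatorname{tr}(GQ_n^{\bot})$ is a direct computation once one expresses everything in terms of the frame operator $G=\sum_{\alpha\in A}w_\alpha\,|f_\alpha\rangle\langle f_\alpha|$ and the projection $Q_n=\sum_{i=1}^n|\psi_i\rangle\langle\psi_i|$. The plan is to rewrite each summand of \eqref{eq:errorterm} as $\|f_\alpha-Q_nf_\alpha\|^2=\|Q_n^{\bot}f_\alpha\|^2$, sum in $\alpha$ against the weights $w_\alpha$, and then recognize the result as a trace.

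\smallskip

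First I would observe that, since $(\psi_i)$ is an ONB, the partial sum $\sum_{i=1}^n\langle\psi_i,f_\alpha\rangle\psi_i$ appearing in \eqref{eq:errorterm} is exactly $Q_nf_\alpha$, so that
\begin{equation}
E_n^{\psi}=\sum_{\alpha\in A}w_\alpha\,\|f_\alpha-Q_nf_\alpha\|^2=\sum_{\alpha\in A}w_\alpha\,\|Q_n^{\bot}f_\alpha\|^2 .
\end{equation}
Next, because $Q_n^{\bot}$ is a self-adjoint projection, $\|Q_n^{\bot}f_\alpha\|^2=\langle f_\alpha,Q_n^{\bot}f_\alpha\rangle=\langle Q_n^{\bot}f_\alpha,Q_n^{\bot}f_\alpha\rangle$; either form will do. Substituting gives
\begin{equation}
E_n^{\psi}=\sum_{\alpha\in A}w_\alpha\,\langle f_\alpha,Q_n^{\bot}f_\alpha\rangle .
\end{equation}

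\smallskip

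Then I would insert a resolution of the identity in the ONB $(\psi_i)$: writing $\langle f_\alpha,Q_n^{\bot}f_\alpha\rangle=\sum_i\langle f_\alpha,\psi_i\rangle\langle\psi_i,Q_n^{\bot}f_\alpha\rangle$, and interchanging the (absolutely convergent, nonnegative-term) sums over $\alpha$ and $i$, one gets
\begin{equation}
E_n^{\psi}=\sum_i\Big\langle\psi_i,\Big(\sum_{\alpha\in A}w_\alpha\,|f_\alpha\rangle\langle f_\alpha|\Big)Q_n^{\bot}\psi_i\Big\rangle=\sum_i\langle\psi_i,GQ_n^{\bot}\psi_i\rangle=\operatorname{tr}(GQ_n^{\bot}),
\end{equation}
using \defref{traceclass} for the last equality (the operator $GQ_n^{\bot}$ is positive, being of the form $(Q_n^{\bot}G^{1/2})^{*}(Q_n^{\bot}G^{1/2})$, so the trace is basis-independent and well defined). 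This completes the identity.

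\smallskip

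The only delicate point is the justification of the Fubini-type interchange of summations over the frame index $\alpha$ and the ONB index $i$, together with the claim that $GQ_n^{\bot}$ is trace class (so that the symbol $\operatorname{tr}(GQ_n^{\bot})$ is meaningful and basis-independent). Both are handled by the same observation: all terms $w_\alpha|\langle\psi_i,f_\alpha\rangle|^2$ are nonnegative, so Tonelli applies unconditionally, and $\operatorname{tr}(GQ_n^{\bot})\le\operatorname{tr}(G)=\sum_\alpha w_\alpha<\infty$ by \lemref{trG} and the assumption that $G$ is trace class. I expect this bookkeeping — rather than any conceptual step — to be the main (and essentially only) obstacle.
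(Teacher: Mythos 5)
Your argument is correct, and it is the standard direct computation behind this lemma (the paper itself gives no proof, deferring to \cite{jorgensen2019dimension,MR2362796}): write $E_{n}^{\psi}=\sum_{\alpha}w_{\alpha}\left\Vert Q_{n}^{\bot}f_{\alpha}\right\Vert ^{2}$, expand in the ONB $(\psi_{i})$, and interchange the nonnegative sums to get $\sum_{i}\left\langle \psi_{i},GQ_{n}^{\bot}\psi_{i}\right\rangle =tr(GQ_{n}^{\bot})$. One small slip in your parenthetical: $\left(Q_{n}^{\bot}G^{1/2}\right)^{*}\left(Q_{n}^{\bot}G^{1/2}\right)=G^{1/2}Q_{n}^{\bot}G^{1/2}$, which is \emph{not} equal to $GQ_{n}^{\bot}$ unless $G$ and $Q_{n}^{\bot}$ commute, so $GQ_{n}^{\bot}$ need not be positive; this is harmless, since $GQ_{n}^{\bot}$ is trace class (trace class times bounded) and, by cyclicity, $tr(GQ_{n}^{\bot})=tr\big(G^{1/2}Q_{n}^{\bot}G^{1/2}\big)$, so the trace is well defined, basis independent, and your computation in the ONB $(\psi_{i})$ evaluates it.
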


The more general frame operators are as follows: Let 
\begin{equation}
G=\sum_{\alpha\in A}w_{\alpha}P_{\alpha}\label{eq:gen_fram_op}
\end{equation}
where $(P_{\alpha})$ is an indexed family of projection operators
in $\mathcal{H}$, i.e., $P_{\alpha}=P_{\alpha}^{*}=P_{\alpha}^{2}$,
for all $\alpha\in A$. $P_{\alpha}$ is trace class if and only if
it is finite-dimensional, i.e., if and only if the subspace $P_{\alpha}\mathcal{H}=\left\{ x\in\mathcal{H}\mid P_{\alpha}x=x\right\} $
is finite-dimensional.

PCA, is the scheme involves a choice of \textquotedblleft principal
components,\textquotedblright{} often realized as a finite-dimensional
subspace of a global (called latent) data set. There are two views
of principal components: The simplest case of consideration of covariance
operators, and in \cite{jorgensen2019dimension} kernel PCA which
refers to a class of reproducing kernels, as used in learning theory
is discussed. In the latter case, one identifies principal features
for the machine learning algorithm.

In can be observed that the simplest way to identify a PCA subspace
is to turn to a covariance operator, namely $G$, acting on the global
data. With the use of a suitable Karhunen-Lo\`eve transform or PCA,
and via a system of i.i.d. standard Gaussians, a covariance operator
which is of trace class may be obtained. An application of the spectral
theorem to this associated operator $G$ (see (\ref{eq:G1}) below),
we then the algorithm for computing eigenspaces corresponding to the
top of the spectrum of $G$, i.e., the subspace spanned by the eigenvectors
for the top $n$ eigenvalues can be obtained; see (\ref{eq:G2}).
These subspaces will then be principal components of order $n$ since
the contribution from the span of the remaining eigenspaces will be
negligible. The algorithm and example will be given in the next subsections.

A second approach to PCA is based on an analogous identification of
principal component subspaces, but with the optimization involving
maximum likelihood, or minimization of ``cost.\textquotedblright{}

Now, although PCA is used popularly in linear data dimension reductions
as PCA decorrelates data, it is noted that the decorrelation only
corresponds to statistical independence in the Gaussian case. So PCA
is not generally the optimal choice for linear data dimension reduction.
However, \textcolor{black}{PCA captures maximal variability in the
data. The reader may find more details in }\cite{jorgensen2019dimension,5158580,7005973}.

PCA enables finding projections which maximize the variance: The first
principal component is the direction in the feature space along which
gives projections with the largest variance. The second principal
component is the variance maximizing direction to all directions orthogonal
to the first principal component. The $i^{th}$ component is the direction
which maximizes variance orthogonal to the $i-1$ previous components.
Thus, PCA captures maximal variability then projects a set of data
in higher dimensional feature space to a lower dimensional feature
space orthogonally and this was proved in Theorem 2.12 in \cite{jorgensen2019dimension}
which is the following theorem. Below, we formulate the iterative
algorithm (see eq. (\ref{eq:G2})) of producing principal components
in the context of trace class operators. In the proof of the theorem,
we can observe how the orthogonal projection operators play the key
part in capturing the maximal variance for PCA application. The example
will be shown in the next subsections with a matrix and with PCA image
compression with principal components which are obtained by using
the projection operators.
\begin{thm}
\label{thm:smallest_error} The Karhunen-Lo\`eve ONB with respect
to the frame operator $G=L^{*}L$ gives the smallest error in the
approximation to a frame operator and the covariance operator $G$
gives maximum variance.
\end{thm}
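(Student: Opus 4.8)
The plan is to reduce the statement to the classical Ky Fan maximum principle for the positive trace-class operator $G$, and then unwind that single inequality into the two assertions of the theorem. First I would use \lemref{error}: the residual attached to an ONB $\left(\psi_{i}\right)$ is $E_{n}^{\psi}=tr\left(GQ_{n}^{\bot}\right)$, while \lemref{trG} gives $tr\left(G\right)=\sum_{\alpha}w_{\alpha}<\infty$. Since $Q_{n}^{\bot}=I-Q_{n}$ and $G$ is trace class, linearity of the trace gives $E_{n}^{\psi}=tr\left(G\right)-tr\left(GQ_{n}\right)$. The first term is independent of $\psi$, and $Q_{n}$ is just the orthogonal projection onto the $n$-dimensional subspace $span\{\psi_{1},\dots,\psi_{n}\}$; hence minimizing $E_{n}^{\psi}$ over all ONBs is the same problem as maximizing the captured variance $tr\left(GQ_{n}\right)$ over all rank-$n$ orthogonal projections.

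Next I would diagonalize: $G\geq0$ is trace class, hence compact and self-adjoint, so the spectral theorem gives $G=\sum_{k}\lambda_{k}|\Theta_{k}\rangle\langle\Theta_{k}|$ with $\lambda_{1}\geq\lambda_{2}\geq\cdots\geq0$ and $\{\Theta_{k}\}$ an ONB — this is precisely the Karhunen-Lo\`eve basis, cf. (\ref{eq:3.9}). For an arbitrary rank-$n$ projection $Q_{n}=\sum_{i=1}^{n}|\psi_{i}\rangle\langle\psi_{i}|$ put $b_{k}:=\sum_{i=1}^{n}\left|\left\langle \psi_{i},\Theta_{k}\right\rangle \right|^{2}$, so that $tr\left(GQ_{n}\right)=\sum_{i=1}^{n}\left\langle \psi_{i},G\psi_{i}\right\rangle =\sum_{k}\lambda_{k}b_{k}$, the interchange of sums being justified by trace-class-ness of $G$. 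Parseval's identity gives $0\leq b_{k}\leq\left\Vert \Theta_{k}\right\Vert ^{2}=1$ and $\sum_{k}b_{k}=\sum_{i=1}^{n}\left\Vert \psi_{i}\right\Vert ^{2}=n$.

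The heart of the matter is the estimate $\sum_{k}\lambda_{k}b_{k}\leq\sum_{k=1}^{n}\lambda_{k}$, which I would prove by
\begin{align*}
\sum_{k=1}^{n}\lambda_{k}-\sum_{k}\lambda_{k}b_{k} & =\sum_{k=1}^{n}\lambda_{k}\left(1-b_{k}\right)-\sum_{k>n}\lambda_{k}b_{k}\\
 & \geq\lambda_{n}\Big(\sum_{k=1}^{n}\left(1-b_{k}\right)-\sum_{k>n}b_{k}\Big)=\lambda_{n}\Big(n-\sum_{k}b_{k}\Big)=0,
\end{align*}
where the inequality uses $\lambda_{k}\geq\lambda_{n}$ and $1-b_{k}\geq0$ for $k\leq n$, and $\lambda_{k}\leq\lambda_{n}$ and $b_{k}\geq0$ for $k>n$. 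Equality holds when $Q_{n}$ is the projection onto $span\{\Theta_{1},\dots,\Theta_{n}\}$, for which $b_{k}=1$ when $k\leq n$ and $b_{k}=0$ otherwise. Combined with the first step, the minimal residual is $E_{n}=\sum_{k>n}\lambda_{k}$, attained precisely by the Karhunen-Lo\`eve ONB — this is the \emph{smallest error} half of the theorem.

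For the \emph{maximum variance} half, the $n=1$ instance of the inequality reads $\left\langle v,Gv\right\rangle \leq\lambda_{1}$ for every unit vector $v$, with equality at $v=\Theta_{1}$, i.e.\ the first principal direction maximizes variance; replacing $\mathcal{H}$ by $\{\Theta_{1},\dots,\Theta_{k-1}\}^{\bot}$ and $G$ by its compression there and repeating gives $\lambda_{k}=\max\{\left\langle v,Gv\right\rangle :\left\Vert v\right\Vert =1,\ v\perp\Theta_{1},\dots,\Theta_{k-1}\}$, attained at $\Theta_{k}$. This is exactly the iterative variance-maximizing property of the PCA/Karhunen-Lo\`eve directions, and it shows the top-$n$ eigenspace of $G$ realizes the largest captured variance $\sum_{k=1}^{n}\lambda_{k}$ among all $n$-dimensional subspaces. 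I expect the Ky Fan estimate of the previous paragraph to be the only real obstacle; the sole analytic subtlety elsewhere is that trace-class-ness of $G$ is what legitimizes the (absolutely convergent) rearrangements of double sums needed in infinite dimensions.
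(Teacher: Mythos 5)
Your argument is correct, and its skeleton is the same as the paper's: you invoke \lemref{error} to write $E_{n}^{\psi}=tr(GQ_{n}^{\bot})=tr(G)-tr(GQ_{n})$, so that minimizing the residual over ONBs becomes maximizing the captured trace $tr(GQ_{n})$, and you handle the ``maximum variance'' clause by the recursive variational characterization of the eigenvalues, which is exactly the construction (\ref{eq:G2}) in the paper. The one genuine difference is how the key inequality $\sum_{k=1}^{n}\lambda_{k}\geq\sum_{k=1}^{n}\left\langle \psi_{k},G\psi_{k}\right\rangle $ (the paper's (\ref{eq:ineq})) is obtained: the paper cites it as a consequence of the Arveson--Kadison theorem \cite{ArKa06} on comparing ordered eigenvalue sequences with diagonal entries, whereas you prove it from scratch via the Ky Fan maximum principle, introducing $b_{k}=\sum_{i=1}^{n}\left|\left\langle \psi_{i},\Theta_{k}\right\rangle \right|^{2}$ with $0\leq b_{k}\leq1$, $\sum_{k}b_{k}=n$, and the elementary estimate $\sum_{k=1}^{n}\lambda_{k}-\sum_{k}\lambda_{k}b_{k}\geq\lambda_{n}\bigl(n-\sum_{k}b_{k}\bigr)=0$. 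What your route buys is self-containedness and an explicit identification of the minimal error as $\sum_{k>n}\lambda_{k}$, together with the equality case; what the paper's citation buys is brevity and a pointer to the more general majorization/Schur--Horn framework of which your inequality is the special case actually needed. Your attention to the infinite-dimensional technicalities (trace-class hypothesis justifying the nonnegative double-sum interchange, Parseval for $b_{k}\leq1$) is exactly what is needed, so I see no gap.
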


In the proof of this theorem in \cite{jorgensen2019dimension}, we
use the covariance operator $G$ which is trace class and positive
semidefinite, applying the spectral theorem to $G$ results is a discrete
spectrum, with the natural order $\lambda_{1}\geq\lambda_{2}\geq...$
and a corresponding ONB $(\phi_{k})$ consisting of eigenvectors,
i.e., 
\begin{equation}
G\phi_{k}=\lambda_{k}\phi_{k},\;k\in\mathbb{N},\label{eq:G1}
\end{equation}
called the Karhunen-Lo\`eve data or principal components. The spectral
data is constructed recursively starting with
\begin{equation}
\begin{split}\lambda_{1} & =\sup_{\phi\in\mathcal{H},\:\left\Vert \phi\right\Vert =1}\left\langle \phi,G\phi\right\rangle =\left\langle \phi_{1},G\phi_{1}\right\rangle ,\;\text{and}\\
\lambda_{k+1} & =\sup_{\stackrel{\phi\in\mathcal{H},\:\left\Vert \phi\right\Vert =1}{\phi\perp\phi_{1},\phi_{2},\dots,\phi_{k}}}\left\langle \phi,G\phi\right\rangle =\left\langle \phi_{k+1},G\phi_{k+1}\right\rangle .
\end{split}
\label{eq:G2}
\end{equation}
This way, the maximal variance is achieved. Now by applying \cite{ArKa06}
we have
\begin{equation}
\sum_{k=1}^{n}\lambda_{k}\geq tr\left(Q_{n}^{\psi}G\right)=\sum_{k=1}^{n}\left\langle \psi_{k},G\psi_{k}\right\rangle \quad\text{for all }n,\label{eq:ineq}
\end{equation}
where $Q_{n}^{\psi}$ is the sequence of projections, deriving from
some ONB $(\psi_{i})$ and are arranged such that the following holds:
\[
\left\langle \psi_{1},G\psi_{1}\right\rangle \geq\left\langle \psi_{2},G\psi_{2}\right\rangle \geq...\quad\text{.}
\]
Hence we are comparing ordered sequences of eigenvalues with sequences
of diagonal matrix entries. So here, \textbf{the sequence of projections
$Q_{n}^{\psi}$ derived from deriving from some ONB $(\psi_{i})$
play the key role in selection of principal components.}

Lastly, we have 
\[
tr\left(G\right)=\sum_{k=1}^{\infty}\lambda_{k}=\sum_{k=1}^{\infty}\left\langle \psi_{k},G\psi_{k}\right\rangle <\infty.
\]

The assertion in \thmref{smallest_error} is the validity of 
\begin{equation}
E_{n}^{\phi}\leq E_{n}^{\psi}\label{eq:error_ineq}
\end{equation}
for all $(\psi_{i})\in ONB(\mathcal{H})$, and all $n=1,2,...$; and
moreover, that the infimum on the RHS in (\ref{eq:error_ineq}) is
attained for the KL-ONB $(\phi_{k})$. But in view of our lemma for
$E_{n}^{\psi}$ (\ref{lem:error}), we see that (\ref{eq:error_ineq})
is equivalent to the system (\ref{eq:ineq}) in the Arveson-Kadison
theorem.

\subsection{\label{subsec:AlgI}The Algorithm for a Digital Image Application}

Here, we show how the above PCA theory with orthogonal projections
\textbf{$Q_{n}^{\psi}$} are implemented as algorithm for application.
Our aim is to reduce the number of bits needed to represent an image
by removing redundancies as much as possible. Karhunen-Loève transform
or PCA is a transform of $m$ vectors with the length $n$ formed
into $m$-dimensional vector $X=[X_{1},\cdots,X_{m}]$ into a vector
$Y$ according to 
\begin{equation}
Y=A\left(X-m_{X}\right),\label{eq:3.1.1}
\end{equation}
where matrix $A$ is obtained by eigenvectors of the covariance matrix
$C$ as in (\ref{eq:mc}) below.

The algorithm for Karhunen-Loève transform or PCA can be described
as follows: 
\begin{itemize}
\item[1.] Take an image or data matrix $X$, and compute the mean of the column
vectors of $X$ 
\begin{equation}
m_{X}=E\left(X\right)=\frac{1}{n}\sum_{i=1}^{n}X_{i}.\label{eq:meanX}
\end{equation}
\item[2.] Subtract the mean: Subtract the mean, $m_{X}$ in (\ref{eq:meanX})
from each column vector of $X$. This produces a data set matrix $B$
whose mean is zero, and it is called centering the data. 
\item[3.] Compute the covariance matrix from the matrix in the previous step
\begin{align}
C=cov(X) & =E\left(\left(X-m_{X}\right)\left(X-m_{X}\right)^{T}\right)\label{eq:mc}\\
 & =\frac{1}{n}\sum_{i=1}^{n}X_{i}X_{i}^{T}-m_{X}m_{X}^{T}.
\end{align}
Here $X-m_{X}$ can be interpreted as subtracting $m_{X}$ from each
column of $X$. $C\left(i,i\right)$ lying in the main diagonal are
the variances of 
\begin{equation}
C(i,i)=E((X_{i}-{m_{X_{i}}})^{2}).
\end{equation}
Also, $C(i,j)=E\left(\left(X_{i}-m_{X_{i}}\right)\left(X_{j}-m_{X_{j}}\right)\right)$
is the covariance between $X_{i}$ and $X_{j}$. 
\item[4.] Compute the eigenvectors and eigenvalues, $\lambda_{i}$ of the covariance
matrix. 
\item[5.] Choose components and form a feature vector (matrix of vectors),
\begin{equation}
A=(eig_{1},...,eig_{n}).
\end{equation}
List the eigenvectors in decreasing order of the magnitude of their
eigenvalues. This matrix $A$ is called the row feature matrix. By
normalizing the column vectors of matrix A, this new matrix $P$ becomes
an orthogonal matrix. Eigenvalues found in step 4 are different in
values. The eigenvector with highest eigenvalue is the principle component
of the data set. Here, the eigenvectors of eigenvalues that are not
up to certain specific values can be dropped thus creating a data
matrix with less dimension value. 
\item[6.] Derive the new data set. 
\[
\text{Final Data}=\text{Row Feature Matrix}\times\text{Row Data Adjust}.
\]
\end{itemize}
The rows of the feature matrix $A$ are orthogonal so the inversion
of PCA can be done on equation (\ref{eq:3.1.1}) by

\begin{equation}
X=A^{T}Y+m_{X}.\label{eq:3.1.2}
\end{equation}

With the $l$ largest eigenvalues with more variance are used instead
of $n$eigenvalues, the matrix $A_{l}$ is formed using the $l$ corresponding
eigenvectors. This yields the newly constructed data or image $X'$
as follows:

\begin{equation}
X'=A_{l}^{T}Y+m_{X}.\label{eq:3.1.2-1}
\end{equation}

Row Feature Matrix is the matrix that has the eigenvectors in its
rows with the most significant eigenvector (i.e., with the greatest
eigenvalue) at the top row of the matrix. Row Data Adjust is the matrix
with mean-adjusted data transposed. That is, the matrix contains the
data items in each column with each row having a separate dimension
(see e.g., \cite{MuPr05,MR3097610,Mar14,jorgensen2019dimension,AT20,JR20,7005973,5158580,Jolliffe2016PrincipalCA}).

In PCA, image compression occurs by the method of dimension reduction.
Here, we need to determine how to choose the right axes. PCA gives
a linear subspace of dimension that is lower than the dimension of
the original image data in such a way that the image data points lie
mainly in the linear subspace with the lower dimension. PCA creates
a new feature-space (subspace) that captures as much variance in the
original image data as possible. The linear subspace is spanned by
the orthogonal vectors that form a basis. These orthogonal vectors
give principal axes, i.e., directions in the data with the largest
variations. As in section \ref{subsec:AlgI}, the PCA algorithm performs
the centering of the image data by subtracting off the mean, and then
determines the direction with the largest variation of the data and
chooses an axis in that direction, and then further explores the remaining
variation and locates another axis that is orthogonal to the first
and explores as much of the remaining variation as possible. This
iteration is performed until all possible axes are exhausted. Once
we have a principal axis, we subtract the variance along this principal
axis to obtain the remaining variance. Then the same procedure is
applied again to obtain the next principal axis from the residual
variance. In addition to being the direction of maximum variance,
the next principal axis must be orthogonal to the other principal
axes. When all the principal axes are obtained, the data set is projected
onto these axes. These new orthogonal coordinate axes are also called
principal components.

The outcome is all the variation along the axes of the coordinate
set, and this makes the covariance matrix diagonal which means each
new variable is uncorrelated with the rest of the variables except
itself. As for some of the axes that are obtained towards last have
very little variation. So they don't contribute much, thus, can be
discarded without affecting the variability in the image data, hence
reducing the dimension (see e.g., \cite{Mar14}).

In PCA image compression, feature selection method is used where we
go through the available features of an image and select useful features
such as variables or predictors, i.e., \textbf{correlation of pixel
values} to the output variables.

PCA removes redundancies and describe the image data with less properties
in a way that it performs a linear transformation moving the original
image data to a new space spanned by principal component. This done
by constructing a new set of properties based on combination of the
old properties. The properties that present low variance are considered
not useful. PCA looks for properties that has maximal variation across
the data to make the principal component space. The eigenvectors found
in PCA algorithm are the new set of axes of the principal component.
Dimension reduction occurs when the eigenvectors with more variance
are chosen but those with less variance are discarded. \cite{Mar14,MuPr05,jorgensen2019dimension,5158580,7005973,Jolliffe2016PrincipalCA}

\subsection{\label{subsec:img}Principal Component Analysis in a Digital Image}

We would like to use a color digital image PCA to illustrate dimension
change in this section, so we introduce a color digital image. A color
digital image is read into a matrix of pixels. We would like to use
Karhunen-Loève transform or PCA applied to a digital image data illustrate
dimension reduction. Here, an image is represented as a matrix of
functions where the entries are pixel values. The following is an
example of a matrix representation of a digital image: 
\begin{equation}
\mathbf{f(x,y)}=\begin{pmatrix}f(0,0) & f(0,1) & \cdots & f(0,N-1)\\
f(1,0) & f(1,1) & \cdots & f(1,N-1)\\
\vdots & \vdots & \vdots & \vdots\\
f(M-1,0) & f(M-1,1) & \cdots & f(M-1,N-1)
\end{pmatrix}.\label{eq:imagematrix-2}
\end{equation}
A color image has three components. Thus a color image matrix has
three of above image pixel matrices for red, green and blue components
and they all appear black and white when viewed ``individually.''
We begin with the following duality principle, (i) \emph{spatial}
vs (ii) \emph{spectral}, and we illustrate its role for the redundancy,
and for correlation of variables, in the resolution-refinement algorithm
for \emph{images}. Specifically: 
\begin{itemize}
\item[(i)] \emph{Spatial Redundancy: }correlation between neighboring pixel
values. 
\item[(ii)] \emph{Spectral Redundancy}: correlation between different color planes
or spectral bands. 
\end{itemize}
We are interested in removing these redundancies using correlations.

Starting with a matrix representation for a particular image, we then
compute the covariance matrix using the steps from (3) and (4) in
algorithm above. We then compute the Karhunen-Loève eigenvalues. Next,
the eigenvalues are arranged in decreasing order. The corresponding
eigenvectors are arranged to match the eigenvalues with multiplicity.
The eigenvalues mentioned here are the same eigenvalues $\lambda_{i}$
in step 4 above, thus yielding smallest error and smallest entropy
in the computation (see e.g., \cite{MR2459323,jorgensen2019dimension,5158580,7005973,Jolliffe2016PrincipalCA}).

The following figure shows the principal components of an image in
increasing eigenvalues where the original image is a color png file.

\begin{figure}[htb]
\centering{}\includegraphics[width=2in]{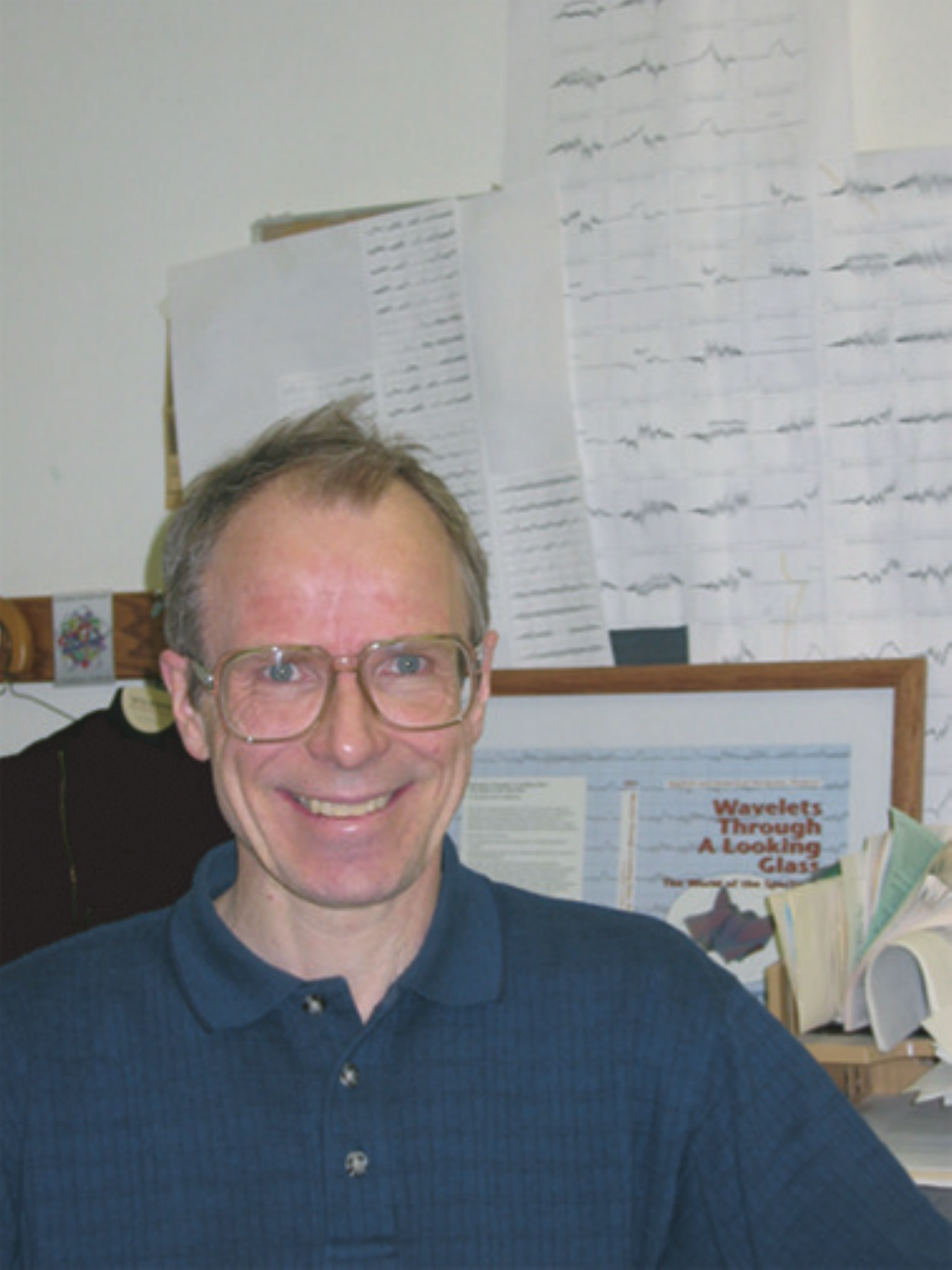} \caption{The Original Jorgensen Image}
\label{PCA_Jor}
\end{figure}

\begin{figure}[h]
\centering \includegraphics[width=0.32\linewidth]{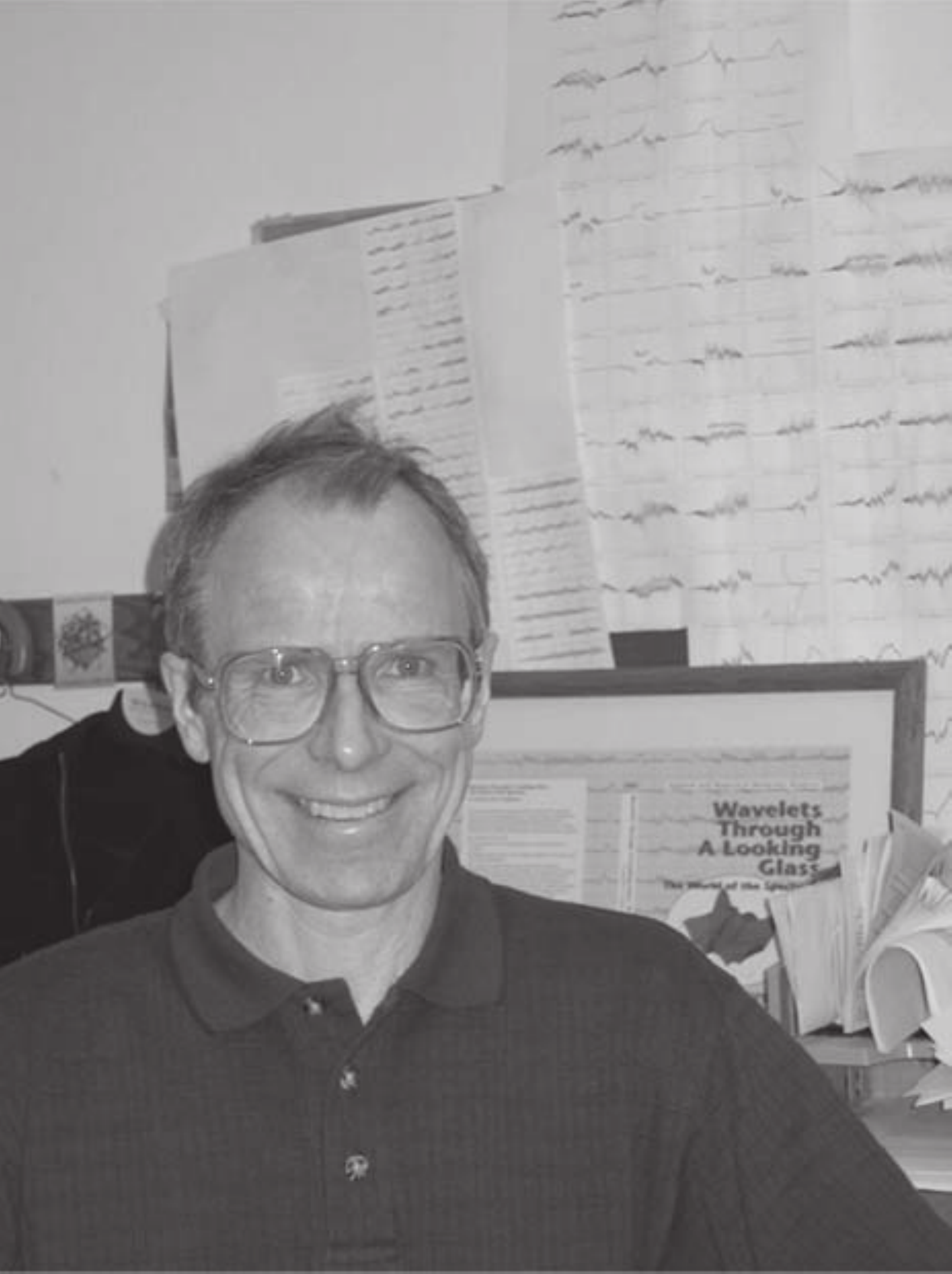} \includegraphics[width=0.32\linewidth]{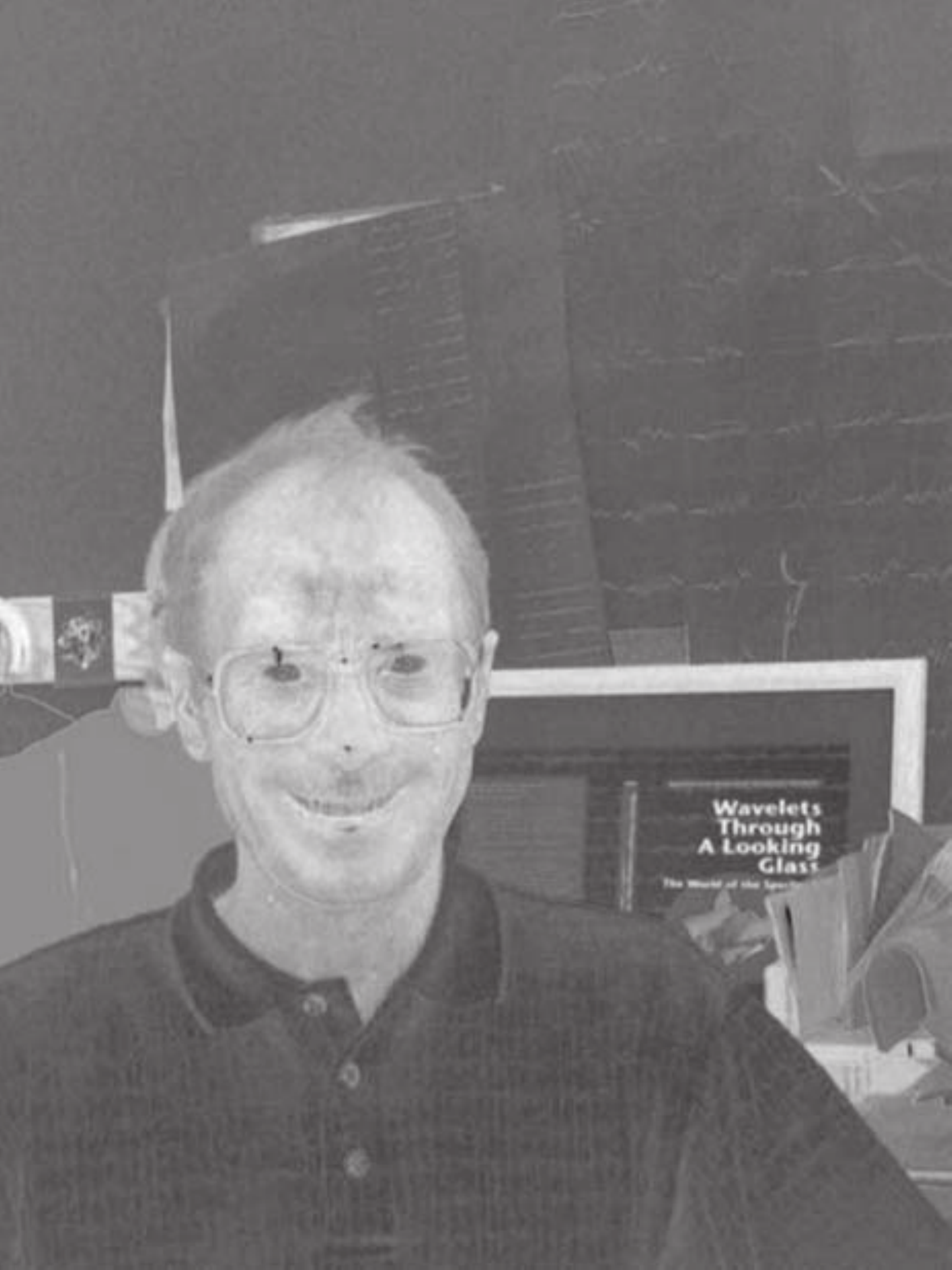}
\includegraphics[width=0.32\linewidth]{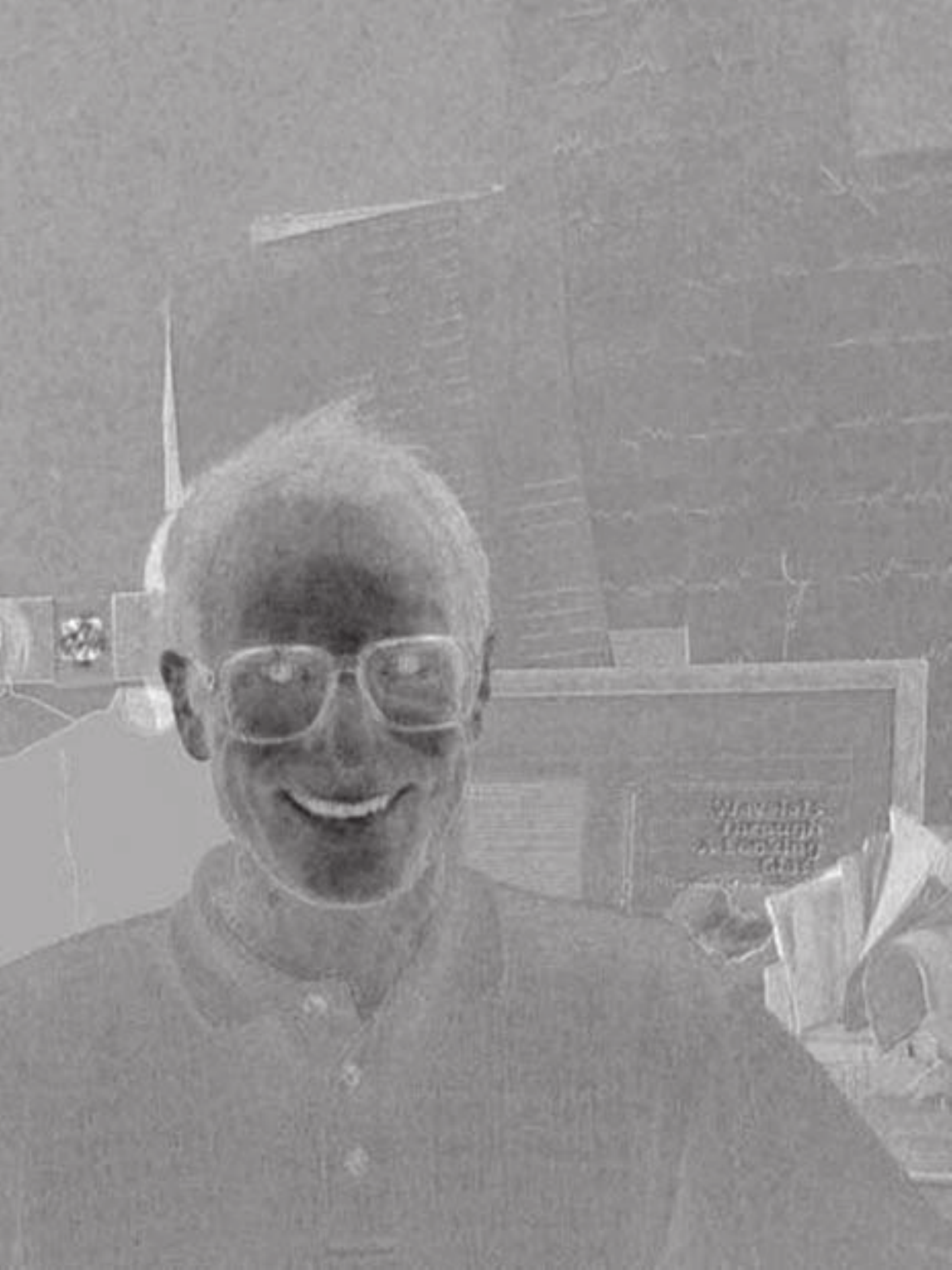}\ \caption{From left to right: First principal component of Jorgensen which corresponds
to the largest eigenvalue, second principal component of Jorgensen
which corresponds to the second largest eigenvalue, third principal
component of Jorgensen which corresponds to the third largest eigenvalue.}
\label{PCA_Jor1}
\end{figure}

The original file is in red, green and blue color image which had
three R, G, B color components. So if $I$ is the original image it
can be represented as 
\begin{equation}
I=w_{1}R+w_{2}G+w_{3}B=f_{R}+f_{G}+f_{B}
\end{equation}
where $w_{1}$, $w_{2}$ and $w_{3}$ are weights which are determined
for different light intensity for color, and $f_{R}$, $f_{G}$ and
$f_{B}$ are the three R, G, B components of the form equation (\ref{eq:imagematrix-2}).
Each matrix appears black and white when viewed individually. Now,
when PCA is performed on the image $I$, it gives alternative components.
Here the original image $I$ is \ref{PCA_Jor}. The original image
used for \ref{PCA_Jor}, is in red, green and blue color components
which are $f_{R}$, $f_{G}$, and $f_{B}$. Please see \cite{jorgensen2019dimension,MuPr05}.

Here, after PCA transformation, instead of RGB components a new three
components are used and this is shown in the Figures \ref{PCA_Jor1}.
The principal components of the image are in the order of increasing
eigenvalues.

Once we see the principal components of the image, we can notice the
significance of the eigenvalue $\lambda$. With PCA, the principal
component corresponding to the largest eigenvalue picks up all the
dominant features of the original image in comparison to the second
principal component corresponding to the second largest eigenvalue
and so on. So the more prominent features of the image is captured
in the principal components in the order of decreasing eigenvalues
with the correlation of pixels of images.

\subsection{A Matrix Example}

In this section, we provide a matrix example of PCA to illustrated
how the PCA algorithm in section \ref{subsec:AlgI} works.

Let, 
\begin{equation}
X=\begin{bmatrix}1 & 0 & 0 & 3\\
-1 & 1 & 1 & 0\\
-1 & -2 & 4 & -5\\
0 & 3 & -1 & 0
\end{bmatrix}\text{ be an image matrix.}
\end{equation}

Following the algorithm of section \ref{subsec:AlgI}, we compute
the mean of each column of the above matrix, $X$. Then we subtract
the mean of each column. For matrix $X$, column 1 has mean of -0.25,
column 2 has mean of 0.5, column 3 has mean of 1, and column 4 has
mean of -0.5.

Next we compute the covariance matrix of $X$ namely $C=cov(X)$.
\begin{equation}
C=cov(X)=\begin{bmatrix}0.9167 & 0.5000 & -1.3333 & 2.5000\\
0.5000 & 4.3333 & -4.0000 & 3.6667\\
-1.3333 & -4.0000 & 4.6667 & -6.0000\\
2.5000 & 3.6667 & -6.0000 & 11.0000
\end{bmatrix}
\end{equation}

The eigenvalues of the covariance matrix, $C=cov(X)$ are $\lambda_{1}=0$,
$\lambda_{2}=0.3551$, $\lambda_{3}=3.2692$, and $\lambda_{4}=17.2924$.
The corresponding eigenvectors of the covariance matrix, $C=cov(X)$
are as follows:

For $\lambda_{1}=0$, the corresponding eigenvector is 
\[
v_{1}=\begin{bmatrix}0.4295\\
0.5154\\
0.7302\\
0.1289
\end{bmatrix}
\]

For $\lambda_{2}=0.3551$, the corresponding eigenvector is 
\[
v_{2}=\begin{bmatrix}0.8584\\
-0.1339\\
-0.3484\\
-0.3519
\end{bmatrix}
\]

For $\lambda_{3}=3.2692$, the corresponding eigenvector is 
\[
v_{3}=\begin{bmatrix}0.2240\\
-0.7582\\
0.3102\\
0.5279
\end{bmatrix}
\]

For $\lambda_{4}=17.2924$, the corresponding eigenvector is 
\[
v_{4}=\begin{bmatrix}0.1685\\
0.3762\\
-0.4992\\
0.7622
\end{bmatrix}
\]

We then form a matrix $A$ which consists of eigenvectors in the columns
of the matrix. The eigenvectors point to the direction of the principal
components represented by eigenvectors with magnitude of eigenvalues.
These $\lambda$ eigenvalues are the variance of the image data and
the magnitude of the eigenvector direction or principal component
direction. The first column of matrix $A$ is the eigenvector $v_{4}$
corresponding to the largest eigenvalue $\lambda_{4}$ and the column
of matrix $A$ is the eigenvector $v_{3}$ corresponding to the largest
eigenvalue $\lambda_{3}$ and so on in decreasing order of eigenvalues
from the largest to smallest respectively. 
\begin{equation}
A=\begin{bmatrix}0.1685 & 0.2240 & 0.8584 & 0.4295\\
0.3762 & -0.7582 & -0.1339 & 0.5154\\
-0.4992 & 0.3102 & -0.3484 & 0.7302\\
0.7622 & 0.5279 & -0.3519 & 0.1289
\end{bmatrix}
\end{equation}

Then we can put the corresponding eigenvalues as diagonal entries
in the $D$ matrix as follows, 
\begin{eqnarray}
D=\left[\begin{array}{cccc}
17.2924 & 0 & 0 & 0\\
0 & 3.2692 & 0 & 0\\
0 & 0 & 0.3551 & 0\\
0 & 0 & 0 & 0
\end{array}\right]
\end{eqnarray}
which is the diagonal matrix of eigenvalues. \cite{MR2362796}

Performing Principal Component Analysis on our matrix $X$ will result
in putting the eigenvectors of $C=cov(X)$ in column vector form in
decreasing order of eigenvalue. After removing the least significant
eigenvector $v_{4}$ we obtain $A_{l}$ matrix 
\begin{equation}
A_{l}=\begin{bmatrix}0.1685 & 0.2240 & 0.8584\\
0.3762 & -0.7582 & -0.1339\\
-0.4992 & 0.3102 & -0.3484\\
0.7622 & 0.5279 & -0.3519
\end{bmatrix}
\end{equation}

Then using equation (\ref{eq:3.1.2-1}) we can obtain compressed image
$X'$.

\subsection{Digital Image Compression Using Principal Component Analysis}

In this section, we will do PCA image compression using black and
white Barbara image using different number of principal components
to reconstruct the image. For more information please see, \cite{AT20,JR20}.

\begin{figure}[htb]
\centering{}\includegraphics[width=2in]{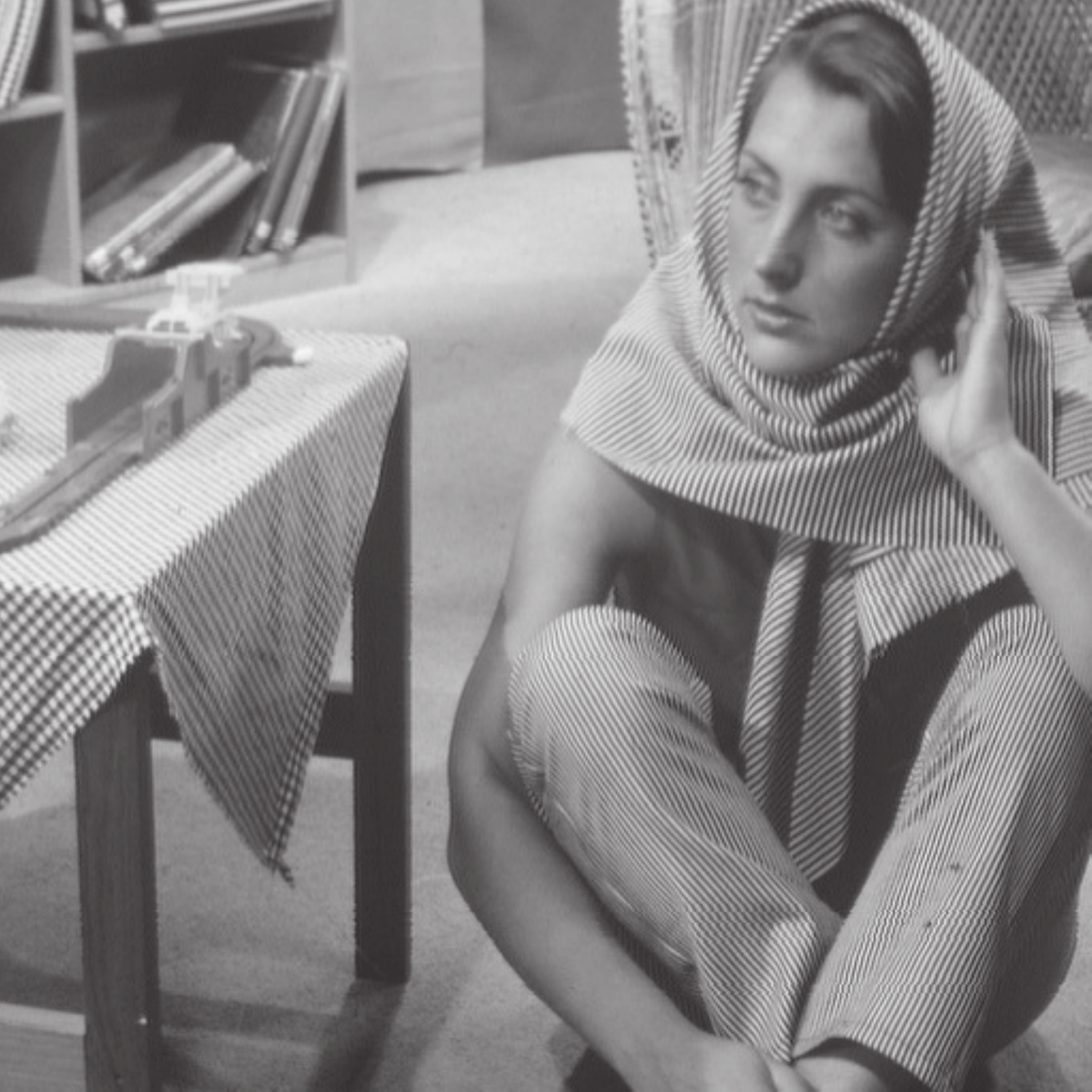} \caption{The original Barbara image from the Test image pool in Google.}
\label{Barbara}
\end{figure}

\begin{figure}[h]
\centering 
 \includegraphics[width=0.32\linewidth]{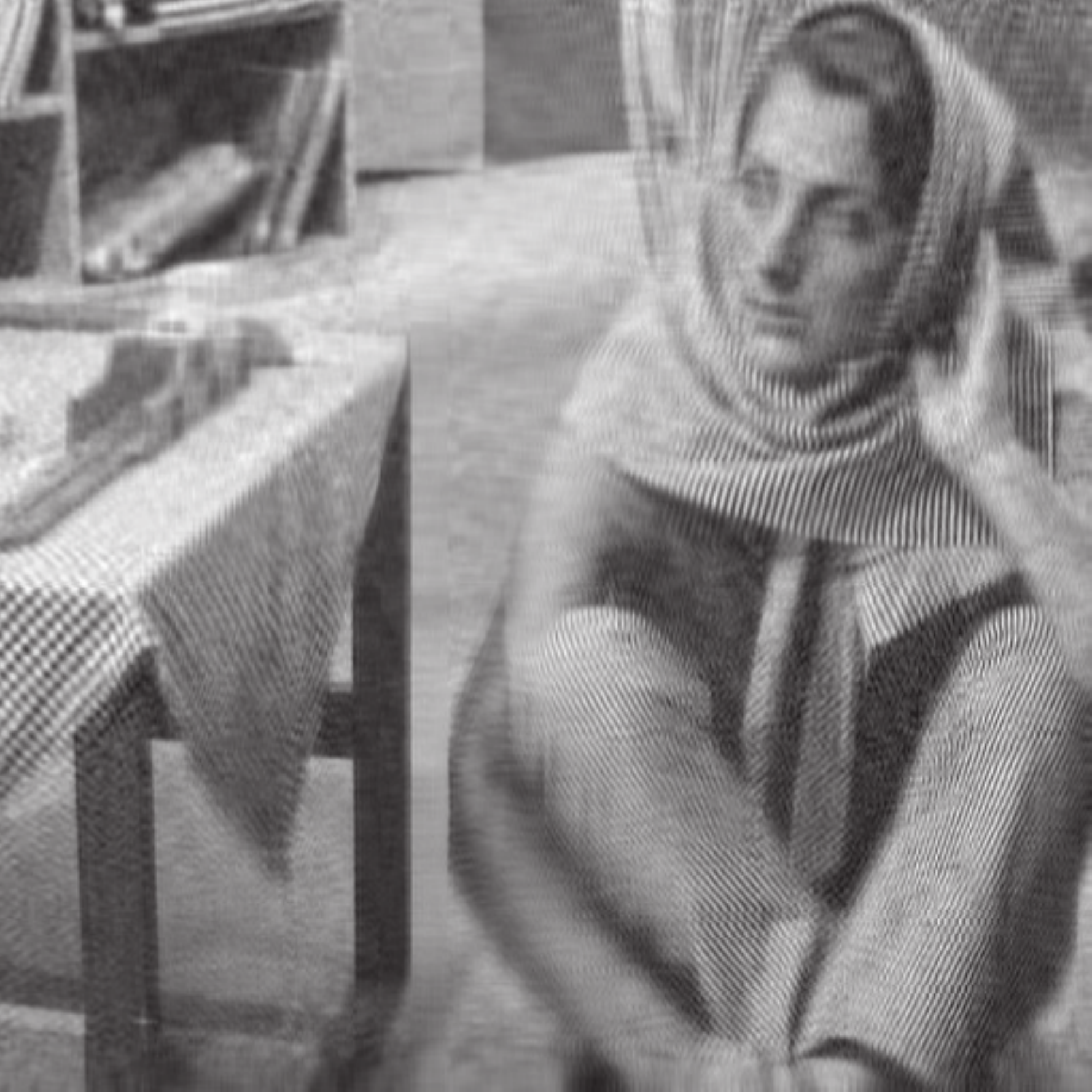} \includegraphics[width=0.32\linewidth]{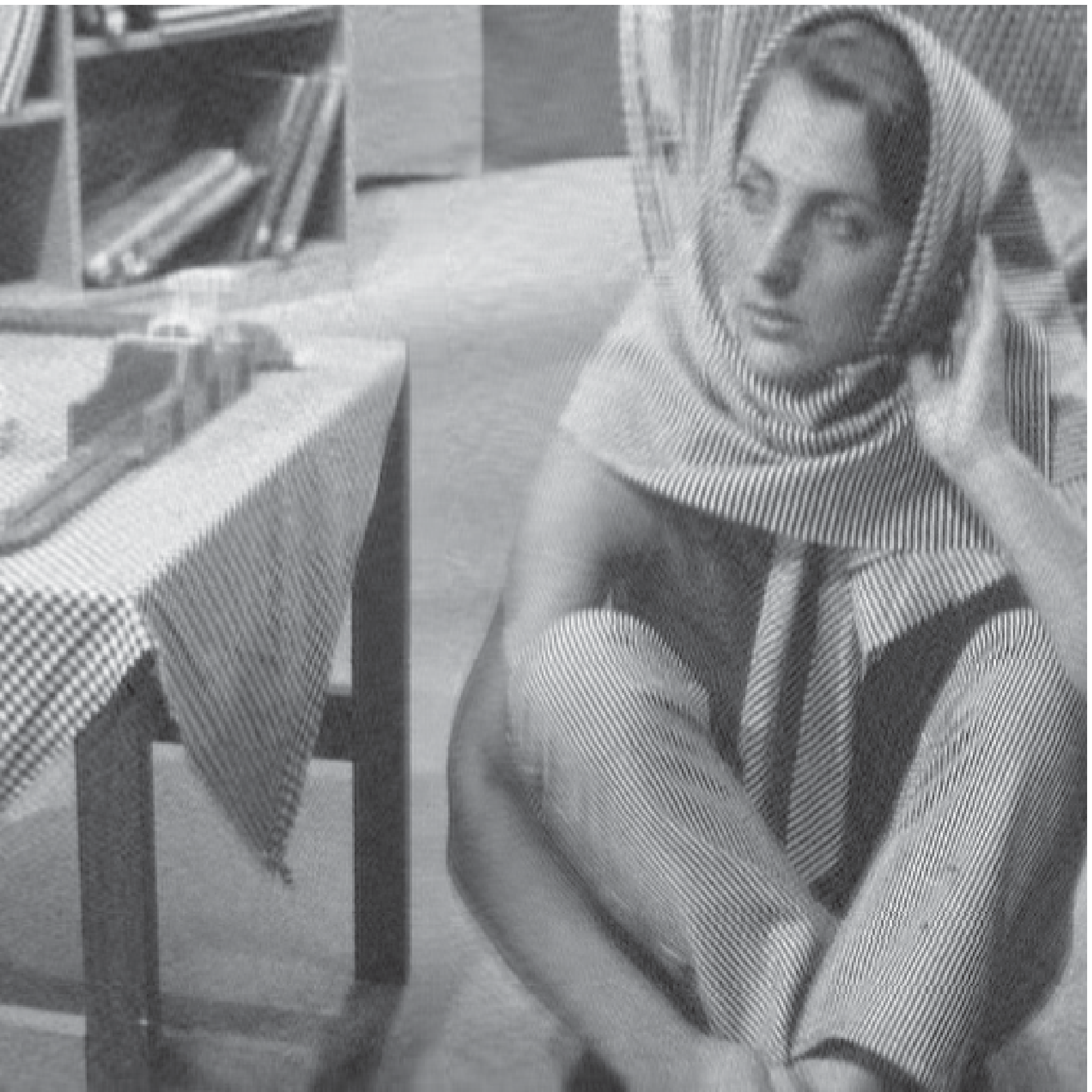}
\includegraphics[width=0.32\linewidth]{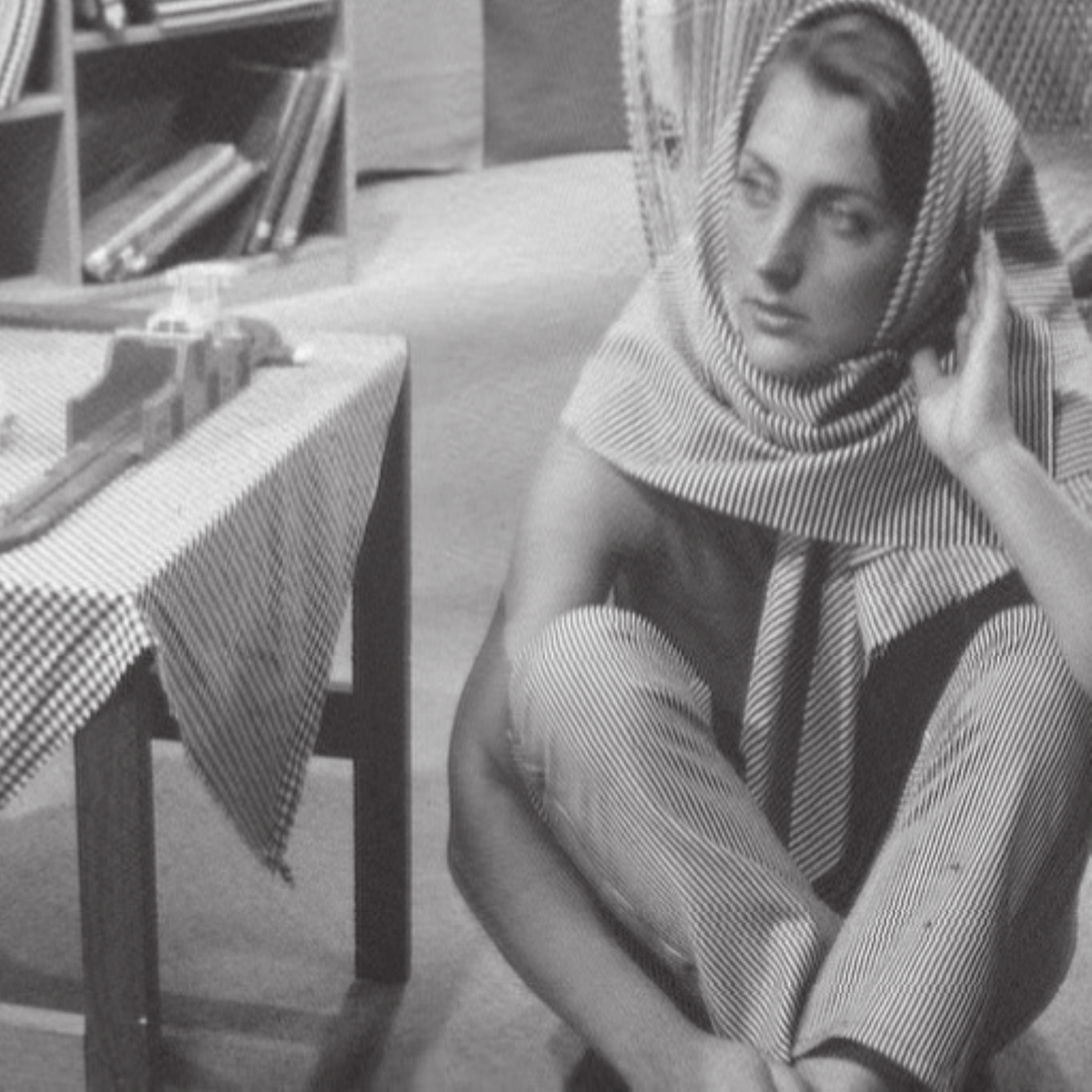}\\
 \includegraphics[width=0.32\linewidth]{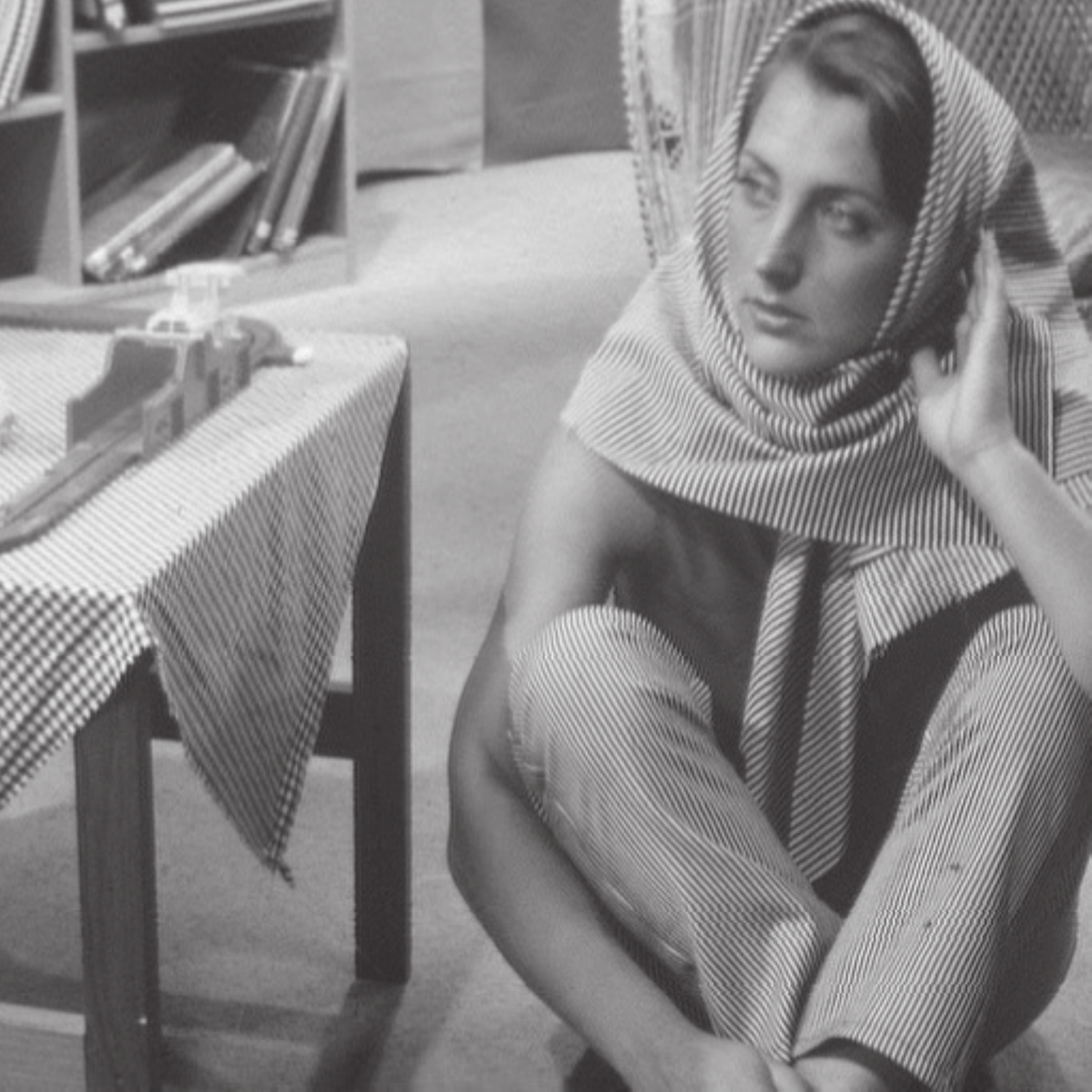} \includegraphics[width=0.32\linewidth]{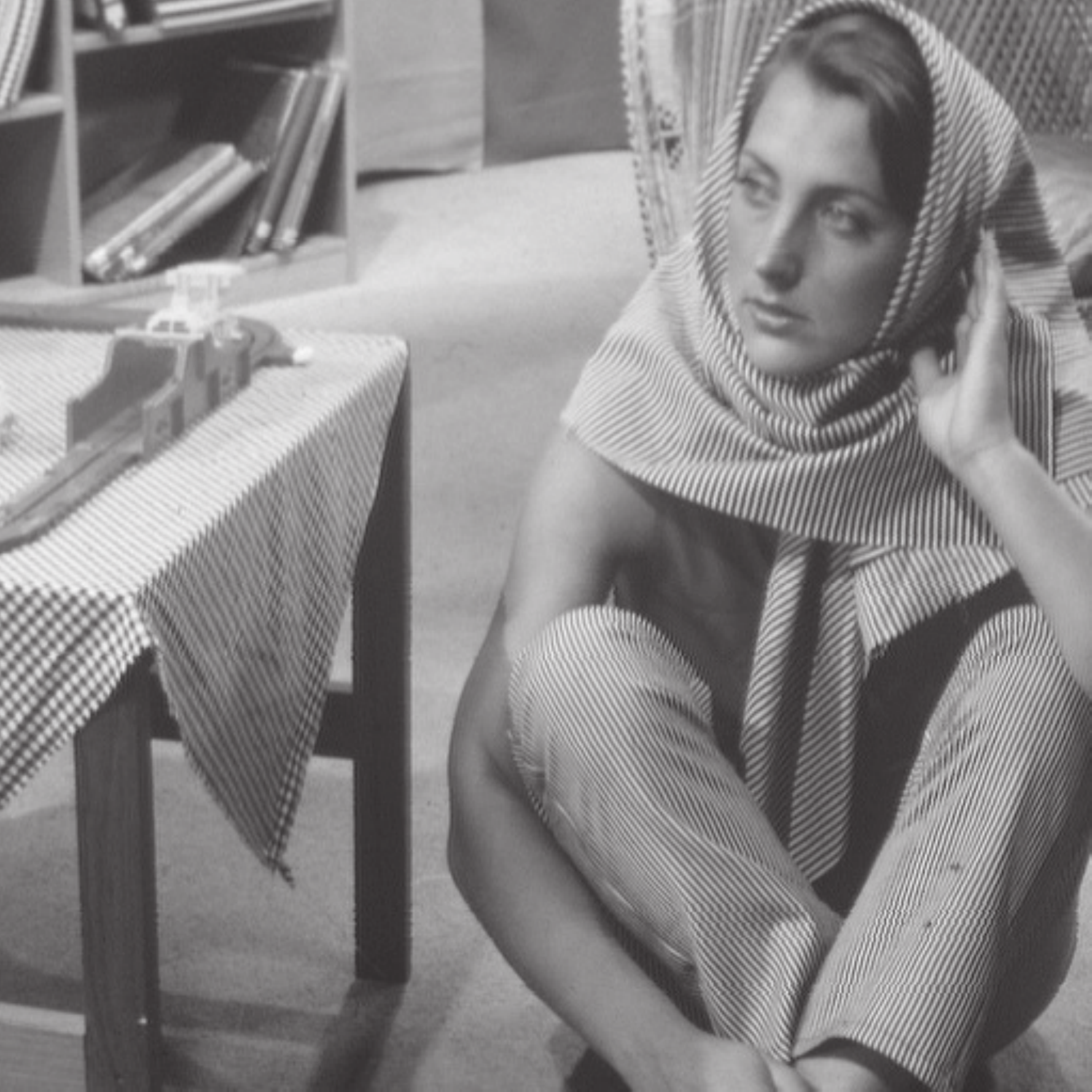}
\includegraphics[width=0.32\linewidth]{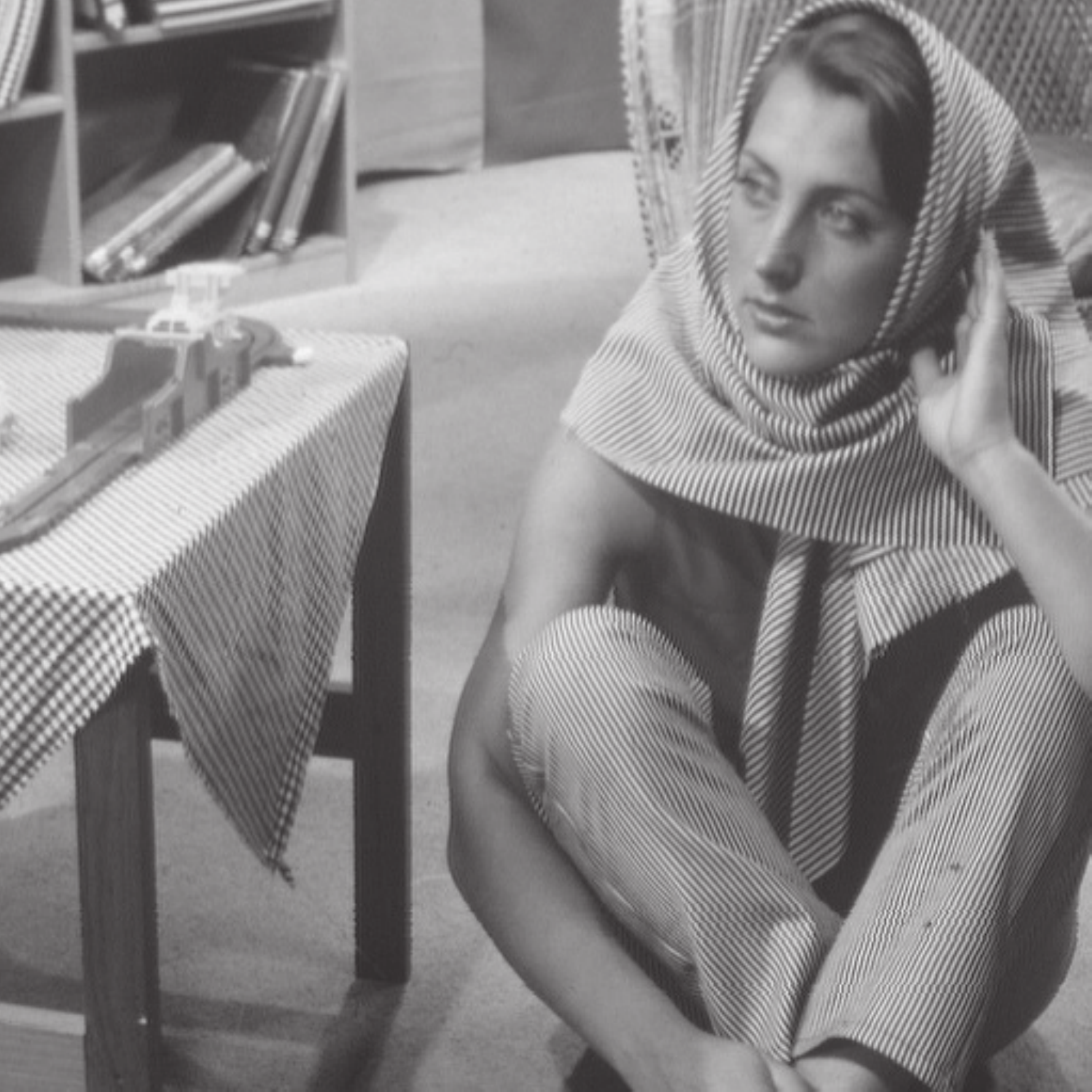}\\
 \includegraphics[width=0.32\linewidth]{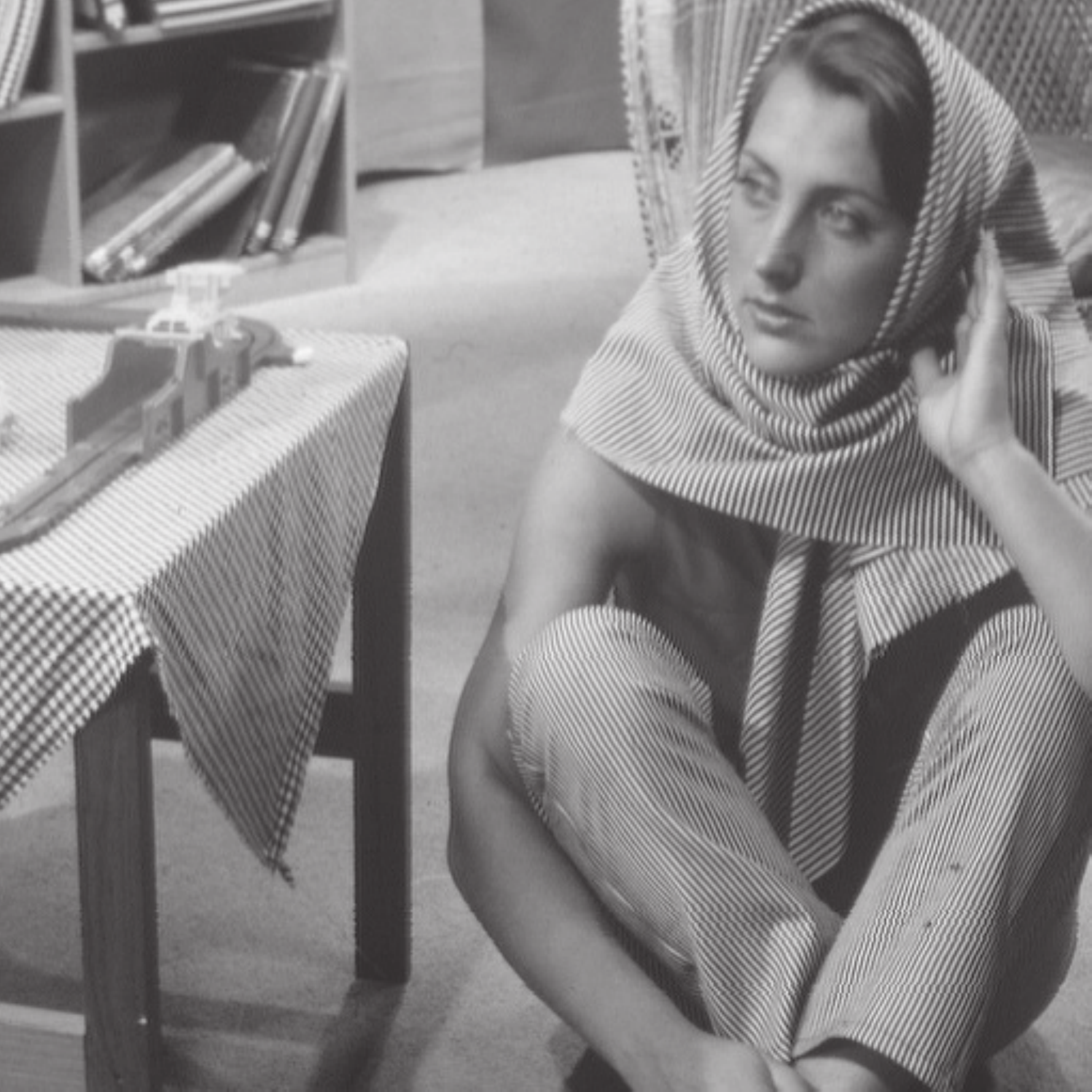} \includegraphics[width=0.32\linewidth]{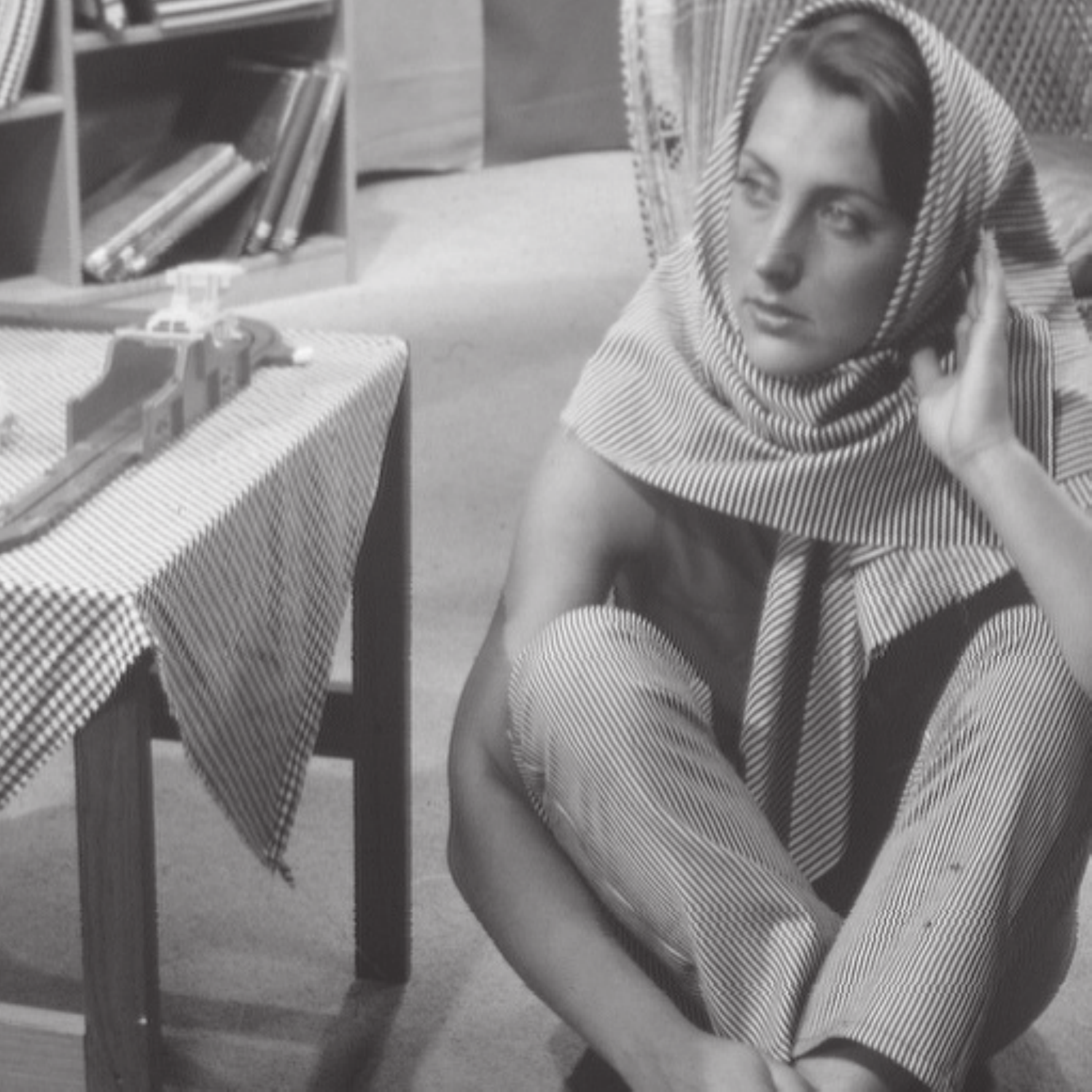}
\includegraphics[width=0.32\linewidth]{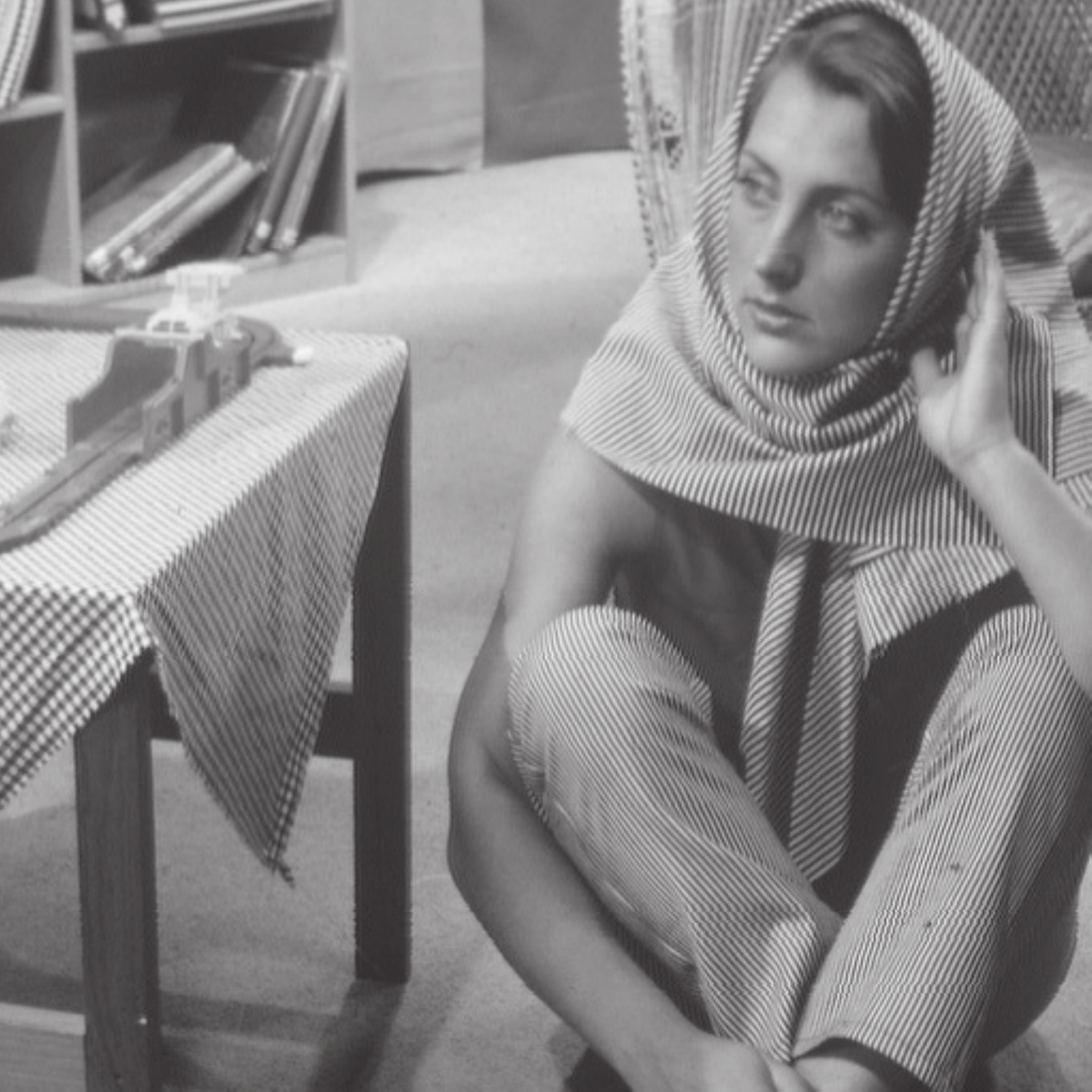}\\
 \includegraphics[width=0.32\linewidth]{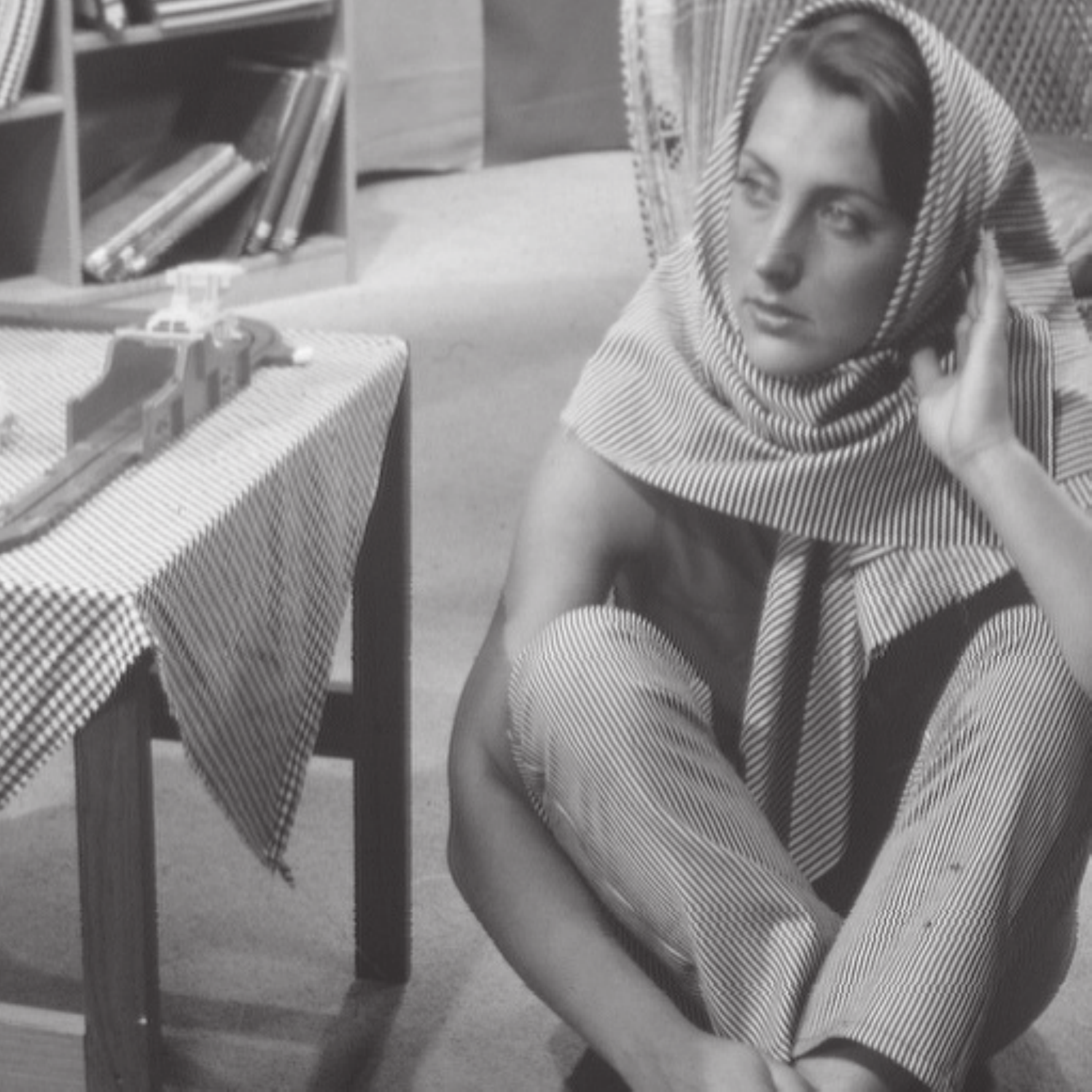} 
 \caption{Reconstructed images of Barbara using $10$, $20$, $30$, $40$,
$50$, $60$, $70$, $80$, $90$ and $100$ principal components
respectively from top to bottom, left to right.}
\end{figure}

\begin{center}
\begin{table}
\begin{centering}
\begin{tabular}{|cccc|}
\hline 
No. of P. C.s & CR & File Size & MSE\tabularnewline
{[}0.5ex{]} 10 & 10.0392 & 160.642 KB & 9.3646\tabularnewline
{[}0.5ex{]} 20 & 5.0196 & 172.250 KB & 5.5603\tabularnewline
{[}0.5ex{]} 30 & 3.3464 & 175.908 KB & 3.3438\tabularnewline
{[}0.5ex{]} 40 & 2.5098 & 176.492 KB & 1.9192\tabularnewline
{[}0.5ex{]} 50 & 2 & 176.436 KB & 1.0752\tabularnewline
{[}0.5ex{]} 60 & 1.6678 & 176.949 KB & 0.6133\tabularnewline
{[}0.5ex{]} 70 & 1.4302 & 178 KB & 0.3315\tabularnewline
{[}0.5ex{]} 80 & 1.2518 & 178 KB & 0.1477\tabularnewline
{[}0.5ex{]} 90 & 1.1130 & 178 KB & 0.0370\tabularnewline
{[}0.5ex{]} 100 & 1 & 178 KB & 1.0363e-10\tabularnewline
{[}0.5ex{]} &  &  & \tabularnewline
\hline 
\multicolumn{1}{c}{} &  &  & \multicolumn{1}{c}{}\tabularnewline
\end{tabular}
\par\end{centering}
\caption{\label{tab:table-name}Table for Barbara PCA compression using different
number of principal components used to reconstruct back the image.}
\end{table}
\par\end{center}

\section{\label{sec:kac}Frames, projections, and Kaczmarz algorithms}

In this section, we consider certain infinite products of projections.
Our framework is motivated by problems in approximation theory, in
harmonic analysis, in frame theory, and the context of the classical
Kaczmarz algorithm \cite{K-1937}. Traditionally, the infinite-dimensional
Kaczmarz algorithm is stated for sequences of vectors in a specified
Hilbert space $\mathscr{H}$, (typically, $\mathscr{H}$ is an $L^{2}$-space.)
We shall here formulate it instead for sequences of projections. As
a corollary, we get explicit and algorithmic criteria for convergence
of certain \emph{infinite products of projections} in $\mathscr{H}$.

\textbf{Motivation. }Our extension of the Kaczmarz algorithm to sequences
of projections is highly nontrivial: while in general convergence
questions for infinite products of projections (in Hilbert space)
is difficult (see e.g., \cite{MR0051437,MR647807,MR2129258,MR3796644}),
our projection-valued formulation of Kaczmarz' algorithm yields an
answer to this convergence question, as well as a number of applications
to stochastic analysis, and to frame-approximation questions in the
Hilbert space $L^{2}\left(\mu\right)$, where $\mu$ is in a class
of \emph{iterated function system} (IFS) measures (see \cite{MR625600,MR1656855,MR2319756,2016arXiv160308852H,MR3800275}). 

\emph{Literature guide}: In addition to Kaczmarz' pioneering paper
\cite{K-1937}, there are also the following more recent developments
of relevance to our present discussion \cite{MR1257097,MR1898684,MR2208766,MR2140451,MR2263965,MR2311862,MR2721177,MR2835851,MR3117886,MR3159297,MR3450541,MR3439812,MR3796634,MR3846956,MR3896982},
as well as \cite{2016arXiv160308852H,MR3796641,HERR2018}.\\

The classical Kaczmarz algorithm is an iterative method for solving
systems of linear equations, for example, $Ax=b$, where $A$ is an
$m\times n$ matrix.

Assume the system is consistent. Let $x_{0}$ be an arbitrary vector
in $\text{\ensuremath{\mathbb{R}^{n}}}$, and set 
\begin{equation}
x_{k}:=\mathop{argmin}_{\left\langle a_{j},x\right\rangle =b_{j}}\left\Vert x-x_{k-1}\right\Vert ^{2},\;k\in\mathbb{N};\label{eq:C1}
\end{equation}
where $j=k\mod m$, and $a_{j}$ denotes the $j^{th}$ row of $A$.
At each iteration, the minimizer is given by 
\begin{equation}
x_{k}=x_{k-1}+\frac{b_{j}-\left\langle a_{j},x_{k-1}\right\rangle }{\left\Vert a_{j}\right\Vert ^{2}}a_{j}.\label{eq:C2}
\end{equation}
That is, the algorithm recursively projects the current state onto
the hyperplane determined by the next row vector of $A$.

There is a stochastic version of (\ref{eq:C2}), where the row vectors
of $A$ are selected randomly \cite{MR2500924}. 

The Kaczmarz algorithm can be formulated in the Hilbert space setting
as follows:
\begin{defn}
Let $\left\{ e_{j}\right\} _{j\in\mathbb{N}_{0}}$ be a spanning set
of unit vectors in a Hilbert space $\mathscr{H}$, i.e., $span\left\{ e_{j}\right\} $
is dense in $\mathscr{H}$. For all $x\in\mathscr{H}$, let $x_{0}=e_{0}$,
and set 
\begin{equation}
x_{k}:=x_{k-1}+e_{k}\left\langle e_{k},x-x_{k-1}\right\rangle .\label{eq:C3}
\end{equation}
We say the sequence $\left\{ e_{j}\right\} _{j\in\mathbb{N}_{0}}$
is \emph{effective }if $\left\Vert x_{k}-x\right\Vert \rightarrow0$
as $k\rightarrow\infty$, for all $x\in\mathscr{H}$.
\end{defn}

\begin{rem}
A key motivation for our present analysis is an important result by
Stanis\l aw Kwapie\'{n}  and Jan Mycielski \cite{MR2263965}, giving
a criterion for stationary sequences (referring to a suitable $L^{2}\left(\mu\right)$)
to be effective.
\end{rem}

\textbf{Observation.} Equation (\ref{eq:C3}) yields, by forward induction:
\begin{eqnarray*}
x-x_{k} & = & \left(1-P_{k}\right)\left(x-x_{k-1}\right)\\
 & = & \left(1-P_{k}\right)\left(1-P_{k-1}\right)\left(x-x_{k-2}\right)\\
 & \vdots\\
 & = & \left(1-P_{k}\right)\left(1-P_{k-1}\right)\cdots\left(1-P_{0}\right)x,
\end{eqnarray*}
where $P_{j}$ is the orthogonal projection onto $e_{j}$. Throughout,
we shall use ``$1$'' also for the identity operator. This motivates
the following:
\begin{defn}
\label{def:efp}A system $\left\{ P_{j}\right\} _{j\in\mathbb{N}_{0}}$
of orthogonal projections in $\mathscr{H}$ is said to be \emph{effective}
if 
\begin{equation}
T_{n}\coloneqq\left(1-P_{n}\right)\left(1-P_{n-1}\right)\cdots\left(1-P_{0}\right)\xrightarrow{\;s\;}0,\label{eq:pc8}
\end{equation}
i.e., $T_{n}$ converges to zero as $n\rightarrow\infty$ in the strong
operator topology. 
\end{defn}

\begin{prop}
\label{prop:pc1}Let $\left\{ P_{j}\right\} _{j\in\mathbb{N}_{0}}$
be a sequence of orthogonal projections in a Hilbert space $\mathscr{H}$.
Suppose there exists $0<c<1$ such that 
\begin{equation}
\left\Vert P_{j}\left(1-P_{j-1}\right)y\right\Vert ^{2}\geq c\left\Vert \left(1-P_{j-1}\right)y\right\Vert ^{2}\label{eq:pc1}
\end{equation}
for all $y\in\mathscr{H}$, and all $j\in\mathbb{N}$. Then the system
$\left\{ P_{j}\right\} _{j\in\mathbb{N}_{0}}$ is effective.
\end{prop}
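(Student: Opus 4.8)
The plan is to prove something quantitatively stronger than \defref{efp}: that for every $x\in\mathscr{H}$ the norms $\left\Vert T_n x\right\Vert$ decay geometrically. First I would fix $x\in\mathscr{H}$ and introduce the partial iterates $u_0:=\left(1-P_0\right)x$ and, recursively, $u_j:=\left(1-P_j\right)u_{j-1}$ for $j\geq1$, so that $u_n=T_n x$ by the definition \eqref{eq:pc8} of $T_n$. The one structural remark I would isolate is that each $u_{j-1}$ lies in the range of the projection $1-P_{j-1}$: for $j\geq2$ because $u_{j-1}=\left(1-P_{j-1}\right)u_{j-2}$, and for $j=1$ because $u_0=\left(1-P_0\right)x$; in either case $\left(1-P_{j-1}\right)u_{j-1}=u_{j-1}$ by idempotency of the projection.

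With this in hand I would apply the hypothesis \eqref{eq:pc1} with the choice $y=u_{j-1}$. Since $\left(1-P_{j-1}\right)u_{j-1}=u_{j-1}$, inequality \eqref{eq:pc1} becomes
\[
\left\Vert P_j u_{j-1}\right\Vert^2=\left\Vert P_j\left(1-P_{j-1}\right)u_{j-1}\right\Vert^2\geq c\left\Vert \left(1-P_{j-1}\right)u_{j-1}\right\Vert^2=c\left\Vert u_{j-1}\right\Vert^2
\]
for every $j\in\mathbb{N}$. Combining this with the Pythagorean identity $\left\Vert \left(1-P_j\right)v\right\Vert^2=\left\Vert v\right\Vert^2-\left\Vert P_j v\right\Vert^2$ (valid because $P_j$ is an orthogonal projection), applied to $v=u_{j-1}$, gives the one-step contraction
\[
\left\Vert u_j\right\Vert^2=\left\Vert u_{j-1}\right\Vert^2-\left\Vert P_j u_{j-1}\right\Vert^2\leq\left(1-c\right)\left\Vert u_{j-1}\right\Vert^2.
\]

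Iterating for $j=1,\dots,n$ then yields $\left\Vert T_n x\right\Vert^2=\left\Vert u_n\right\Vert^2\leq\left(1-c\right)^n\left\Vert u_0\right\Vert^2\leq\left(1-c\right)^n\left\Vert x\right\Vert^2$. Since $0<c<1$ we have $\left(1-c\right)^n\to0$, hence $\left\Vert T_n x\right\Vert\to0$; as $x\in\mathscr{H}$ was arbitrary, $T_n\xrightarrow{\;s\;}0$, i.e.\ the system is effective in the sense of \defref{efp}. (The same estimate in fact shows $\left\Vert T_n\right\Vert\leq\left(1-c\right)^{n/2}$, so convergence even holds in operator norm.) I do not anticipate a genuine obstacle here; the only point needing care is the bookkeeping in the first step, namely that \eqref{eq:pc1} must be applied to a vector already invariant under $1-P_{j-1}$, so that the factor $\left(1-P_{j-1}\right)y$ on the right of \eqref{eq:pc1} collapses to $u_{j-1}$ itself and the per-step bound telescopes cleanly into geometric decay.
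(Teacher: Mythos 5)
Your argument is correct and is essentially the paper's own proof: both use the Pythagorean identity $\|(1-P_j)v\|^2=\|v\|^2-\|P_jv\|^2$ for the running iterate $u_{j-1}=T_{j-1}x$ together with hypothesis \eqref{eq:pc1} to get the one-step contraction $\|u_j\|^2\leq(1-c)\|u_{j-1}\|^2$, and then telescope to geometric decay $(1-c)^n\|(1-P_0)x\|^2\to0$. Your added observation that this even gives norm convergence $\|T_n\|\leq(1-c)^{n/2}$ is a correct (if unstated in the paper) bonus, but the route is the same.
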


\begin{proof}
Assume (\ref{eq:pc1}) holds. Then, for all $x\in\mathscr{H}$, 
\begin{eqnarray*}
 &  & \left\Vert \left(1-P_{n}\right)\left(1-P_{n-1}\right)\cdots\left(1-P_{0}\right)x\right\Vert ^{2}\\
 & = & \left\Vert \left(1-P_{n-1}\right)\cdots\left(1-P_{0}\right)x\right\Vert ^{2}-\left\Vert P_{n}\left(1-P_{n-1}\right)\cdots\left(1-P_{0}\right)x\right\Vert ^{2}\\
 & \leq & \left\Vert \left(1-P_{n-1}\right)\cdots\left(1-P_{0}\right)x\right\Vert ^{2}-c\left\Vert \left(1-P_{n-1}\right)\cdots\left(1-P_{0}\right)x\right\Vert ^{2}\\
 & = & \left(1-c\right)\left\Vert \left(1-P_{n-1}\right)\cdots\left(1-P_{0}\right)x\right\Vert ^{2}\\
 & \leq & \left(1-c\right)^{2}\left\Vert \left(1-P_{n-2}\right)\cdots\left(1-P_{0}\right)x\right\Vert ^{2}\\
 & \vdots\\
 & \leq & \left(1-c\right)^{n}\left\Vert \left(1-P_{0}\right)x\right\Vert ^{2}\rightarrow0,\;\text{as \ensuremath{n\rightarrow\infty}}.
\end{eqnarray*}
\end{proof}
\begin{rem}
Condition (\ref{eq:pc1}) may be replaced by 
\[
\left\Vert P_{j}\left(1-P_{j-1}\right)y\right\Vert ^{2}\geq c_{j}\left\Vert \left(1-P_{j-1}\right)y\right\Vert ^{2},\quad\forall y\in\mathscr{H},
\]
with $0<c_{j}<1$. Then the proof of \propref{pc1} is modified as:
\begin{eqnarray*}
 &  & \left\Vert \left(1-P_{n}\right)\left(1-P_{n-1}\right)\cdots\left(1-P_{0}\right)x\right\Vert ^{2}\\
 & \leq & \left(1-c_{n}\right)\left\Vert \left(1-P_{n-1}\right)\cdots\left(1-P_{0}\right)x\right\Vert ^{2}\\
 & \vdots\\
 & \leq & \left(1-c_{n}\right)\cdots\left(1-c_{1}\right)\left\Vert \left(1-P_{0}\right)x\right\Vert ^{2}\rightarrow0,\;\text{as \ensuremath{n\rightarrow\infty}}.
\end{eqnarray*}
\end{rem}

\begin{thm}
\label{thm:pc3}Let $\left\{ P_{j}\right\} _{j\in\mathbb{N}_{0}}$
be a sequence of orthogonal projections in a Hilbert space $\mathscr{H}$.
Set 
\begin{align}
T_{n} & =\left(1-P_{n}\right)\left(1-P_{n-1}\right)\cdots\left(1-P_{0}\right),\label{eq:pc3}\\
Q_{n} & =P_{n}\left(1-P_{n-1}\right)\cdots\left(1-P_{0}\right),\label{eq:pc4}
\end{align}
where $Q_{0}=P_{0}$. 

For all $n\in\mathbb{N}$, we have 
\[
\left\Vert x\right\Vert ^{2}=\left\Vert T_{n}x\right\Vert ^{2}+\sum_{k=0}^{n}\left\Vert Q_{k}x\right\Vert ^{2},\quad x\in\mathscr{H}.
\]
Hence $T_{n}\xrightarrow{\;s\:}0$ if and only if 
\begin{equation}
I=\sum_{j\in\mathbb{N}_{0}}Q_{j}^{*}Q_{j}.\label{eq:pc5}
\end{equation}
More precisely, (\ref{eq:pc5}) means that, 
\begin{equation}
\left\langle x,y\right\rangle =\sum_{j\in\mathbb{N}_{0}}\left\langle Q_{j}x,Q_{j}y\right\rangle ,\quad x,y\in\mathscr{H}.\label{eq:pc6}
\end{equation}
In particular, 
\begin{equation}
\left\Vert x\right\Vert ^{2}=\sum_{j\in\mathbb{N}_{0}}\left\Vert Q_{j}x\right\Vert ^{2},\quad x\in\mathscr{H}.\label{eq:pc7}
\end{equation}
\end{thm}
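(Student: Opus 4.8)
The plan is to establish the single Pythagorean identity
\[
\left\Vert x\right\Vert ^{2}=\left\Vert T_{n}x\right\Vert ^{2}+\sum_{k=0}^{n}\left\Vert Q_{k}x\right\Vert ^{2},\qquad x\in\mathscr{H},
\]
by induction on $n$, and then deduce all the remaining assertions as formal consequences. The base case $n=0$ is just $\left\Vert x\right\Vert^{2}=\left\Vert(1-P_{0})x\right\Vert^{2}+\left\Vert P_{0}x\right\Vert^{2}$, which is the orthogonality relation for the single projection $P_{0}$. For the inductive step, write $S_{n-1}=(1-P_{n-1})\cdots(1-P_{0})$, so that $T_{n-1}=S_{n-1}$, $T_{n}=(1-P_{n})S_{n-1}$ and $Q_{n}=P_{n}S_{n-1}$. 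Applying the one-projection identity $\left\Vert y\right\Vert^{2}=\left\Vert(1-P_{n})y\right\Vert^{2}+\left\Vert P_{n}y\right\Vert^{2}$ with $y=S_{n-1}x=T_{n-1}x$ gives
\[
\left\Vert T_{n-1}x\right\Vert ^{2}=\left\Vert T_{n}x\right\Vert ^{2}+\left\Vert Q_{n}x\right\Vert ^{2},
\]
and substituting this into the induction hypothesis $\left\Vert x\right\Vert^{2}=\left\Vert T_{n-1}x\right\Vert^{2}+\sum_{k=0}^{n-1}\left\Vert Q_{k}x\right\Vert^{2}$ closes the induction. I expect this step to be entirely routine; there is essentially no obstacle, since the whole content is the repeated use of $P=P^{*}=P^{2}$ for each $P_{j}$ together with the telescoping structure of the products.

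From the identity, the sequence $n\mapsto\left\Vert T_{n}x\right\Vert^{2}$ is nonincreasing and bounded below by $0$, while $\sum_{k=0}^{n}\left\Vert Q_{k}x\right\Vert^{2}$ is nondecreasing and bounded above by $\left\Vert x\right\Vert^{2}$; hence both converge, and their limits satisfy $\lim_{n}\left\Vert T_{n}x\right\Vert^{2}+\sum_{k=0}^{\infty}\left\Vert Q_{k}x\right\Vert^{2}=\left\Vert x\right\Vert^{2}$. Therefore $T_{n}x\to 0$ for every $x$ (i.e. $T_{n}\xrightarrow{s}0$) if and only if $\left\Vert x\right\Vert^{2}=\sum_{j\in\mathbb{N}_{0}}\left\Vert Q_{j}x\right\Vert^{2}$ for every $x$, which is exactly $(\ref{eq:pc7})$. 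The equivalence of $(\ref{eq:pc7})$ with $(\ref{eq:pc5})$ and $(\ref{eq:pc6})$ is then a standard polarization argument: $(\ref{eq:pc7})$ says $\sum_{j}Q_{j}^{*}Q_{j}=I$ in the sense of quadratic forms, and polarizing the real (or complex) quadratic form recovers the sesquilinear statement $(\ref{eq:pc6})$, which is the meaning attached to $(\ref{eq:pc5})$. I would remark that the partial sums $\sum_{k=0}^{n}Q_{k}^{*}Q_{k}=I-T_{n}^{*}T_{n}$ are an increasing bounded sequence of positive operators, so $(\ref{eq:pc5})$ may also be read as strong convergence of these partial sums to $I$, again equivalent to $T_{n}^{*}T_{n}\xrightarrow{s}0$, hence to $T_{n}\xrightarrow{s}0$.

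The only point that deserves a word of care is the passage between ``$T_{n}x\to 0$ for all $x$'' and ``$T_{n}^{*}T_{n}\to 0$ strongly'': here one uses $\left\Vert T_{n}^{*}T_{n}x\right\Vert\le\left\Vert T_{n}\right\Vert\,\left\Vert T_{n}x\right\Vert\le\left\Vert T_{n}x\right\Vert$ since each $T_{n}$ is a product of contractions and hence a contraction. This is the closest thing to an obstacle, and it is minor. I would present the induction first, then the two monotone-convergence observations, then the polarization, closing with the operator-form restatement.
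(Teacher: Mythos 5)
Your proof is correct and follows essentially the same route as the paper: your induction on $n$ is exactly the paper's iterated use of the one-projection Pythagorean identity $\left\Vert T_{n}x\right\Vert ^{2}=\left\Vert T_{n-1}x\right\Vert ^{2}-\left\Vert Q_{n}x\right\Vert ^{2}$, followed by the same monotonicity argument to pass to the limit. The polarization step and the operator reformulation $\sum_{k=0}^{n}Q_{k}^{*}Q_{k}=I-T_{n}^{*}T_{n}$ are details the paper leaves implicit, and you supply them correctly.
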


\begin{proof}
Note that 
\begin{eqnarray*}
\left\Vert T_{n}x\right\Vert ^{2} & = & \left\Vert \left(1-P_{n}\right)\left(1-P_{n-1}\right)\cdots\left(1-P_{0}\right)x\right\Vert ^{2}\\
 & = & \left\Vert \left(1-P_{n-1}\right)\cdots\left(1-P_{0}\right)x\right\Vert ^{2}-\left\Vert P_{n}\left(1-P_{n-1}\right)\cdots\left(1-P_{0}\right)x\right\Vert ^{2}\\
 & = & \left\Vert T_{n-1}x\right\Vert ^{2}-\left\Vert Q_{n}x\right\Vert ^{2}\\
 & = & \left\Vert T_{n-2}x\right\Vert ^{2}-\left\Vert Q_{n-1}x\right\Vert ^{2}-\left\Vert Q_{n}x\right\Vert ^{2}\\
 & \vdots\\
 & = & \left\Vert \left(1-P_{0}\right)x\right\Vert ^{2}-\left\Vert Q_{1}x\right\Vert ^{2}-\cdots-\left\Vert Q_{n-1}x\right\Vert ^{2}-\left\Vert Q_{n}x\right\Vert ^{2}\\
 & = & \left\Vert x\right\Vert ^{2}-\left\Vert Q_{0}x\right\Vert ^{2}-\left\Vert Q_{1}x\right\Vert ^{2}-\cdots-\left\Vert Q_{n-1}x\right\Vert ^{2}-\left\Vert Q_{n}x\right\Vert ^{2}.
\end{eqnarray*}
Therefore 
\[
T_{n}\xrightarrow{\;s\:}0\Longleftrightarrow\left\Vert x\right\Vert ^{2}=\sum_{j\in\mathbb{N}_{0}}\left\Vert Q_{j}x\right\Vert ^{2}.
\]
\end{proof}
\begin{rem}
The system of operators $\left\{ Q_{j}\right\} _{j\in\mathbb{N}_{0}}$
in \thmref{pc3} has frame-like properties, see (\ref{eq:pc5})--(\ref{eq:pc7}).
Specifically, the mapping 
\[
\mathscr{H}\ni x\xmapsto{\;V\;}\left(Q_{j}x\right)\in l^{2}\left(\mathbb{N}_{0}\right)\otimes\mathscr{H}
\]
plays the role of an analysis operator, and the synthesis operator
$V^{*}$ is given by 
\[
l^{2}\left(\mathbb{N}_{0}\right)\otimes\mathscr{H}\ni\xi\xmapsto{\;V^{*}\;}\sum_{j\in\mathbb{N}_{0}}Q_{j}^{*}\xi_{j}.
\]
Note that $1=V^{*}V$, and (\ref{eq:pc7}) is the \emph{generalized
Parseval identity}. 
\end{rem}

\subsection{The case of rank-1 projections in $\mathscr{H}$}

Let $\left\{ P_{j}\right\} _{j\in\mathbb{N}_{0}}$ be a system of
rank-1  projections, i.e., $P_{j}=\left|e_{j}\left\rangle \right\langle e_{j}\right|$,
where $\left\{ e_{j}\right\} _{j\in\mathbb{N}_{0}}$ is a set of unit
vectors in $\mathscr{H}$. When the system $\left\{ e_{j}\right\} $
is independent, then the corresponding family of projections $P_{j}=\left|e_{j}\left\rangle \right\langle e_{j}\right|$
is non-commutative.
\begin{cor}
Suppose all the $P_{j}$'s are of rank-1, i.e., $P_{j}=\left|e_{j}\left\rangle \right\langle e_{j}\right|$
where $\left\{ e_{j}\right\} _{j\in\mathbb{N}_{0}}$ is a set of unit
vectors in $\mathscr{H}$. Assume $\left\{ P_{j}\right\} _{j\in\mathbb{N}_{0}}$
is effective (\defref{efp}). Set 
\begin{align*}
g_{0} & =e_{0},\\
g_{n} & =\left(1-P_{0}\right)\left(1-P_{1}\right)\cdots\left(1-P_{n-1}\right)e_{n},\quad n\in\mathbb{N}.
\end{align*}
Then $\left\{ g_{n}\right\} _{n\in\mathbb{N}_{0}}$ is a Parseval
frame in $\mathscr{H}$. 

Specifically, we have 
\begin{align}
g_{0} & =e_{0}\nonumber \\
g_{n} & =e_{n}-\sum_{j=0}^{n-1}\left\langle e_{j},e_{n}\right\rangle g_{j},\quad n\in\mathbb{N}.\label{eq:gn}
\end{align}
\end{cor}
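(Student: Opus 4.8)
The plan is to identify the vectors $g_{n}$ with the building blocks $Q_{n}$ of \thmref{pc3} and then read off the Parseval identity directly. Set $R_{n}:=(1-P_{0})(1-P_{1})\cdots(1-P_{n-1})$ with the convention $R_{0}=I$, so that $g_{n}=R_{n}e_{n}$. Since each $1-P_{i}$ is a self-adjoint projection, $R_{n}^{*}=(1-P_{n-1})\cdots(1-P_{0})$, and hence the operator $Q_{n}=P_{n}(1-P_{n-1})\cdots(1-P_{0})$ of \thmref{pc3} can be written $Q_{n}=|e_{n}\rangle\langle e_{n}|\,R_{n}^{*}$. A short computation then gives, for $x\in\mathscr{H}$, $Q_{n}x=\langle e_{n},R_{n}^{*}x\rangle\,e_{n}=\langle R_{n}e_{n},x\rangle\,e_{n}=\langle g_{n},x\rangle\,e_{n}$, while $Q_{n}^{*}e_{n}=R_{n}e_{n}=g_{n}$ (using $\|e_{n}\|=1$); combining these, $Q_{n}^{*}Q_{n}=|g_{n}\rangle\langle g_{n}|$. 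Now, since $\{P_{j}\}_{j\in\mathbb{N}_{0}}$ is effective (\defref{efp}), \thmref{pc3} gives $I=\sum_{j\in\mathbb{N}_{0}}Q_{j}^{*}Q_{j}=\sum_{j\in\mathbb{N}_{0}}|g_{j}\rangle\langle g_{j}|$ in the strong operator topology, and unwinding this reads $\|x\|^{2}=\sum_{j\in\mathbb{N}_{0}}|\langle g_{j},x\rangle|^{2}$ for all $x\in\mathscr{H}$, which is exactly the assertion that $\{g_{j}\}_{j\in\mathbb{N}_{0}}$ is a Parseval frame (one may discard any $g_{j}$ that vanish, should the convention require nonzero frame vectors).

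It then remains to derive the explicit recursion $g_{n}=e_{n}-\sum_{j=0}^{n-1}\langle e_{j},e_{n}\rangle g_{j}$; this is a purely algebraic identity, needing neither effectiveness nor \thmref{pc3}, and I would obtain it by telescoping the factorization $R_{k}=R_{k-1}(1-P_{k-1})$. From $P_{k-1}e_{n}=\langle e_{k-1},e_{n}\rangle e_{k-1}$ we get $R_{k}e_{n}-R_{k-1}e_{n}=-R_{k-1}P_{k-1}e_{n}=-\langle e_{k-1},e_{n}\rangle R_{k-1}e_{k-1}=-\langle e_{k-1},e_{n}\rangle g_{k-1}$, and summing over $k=1,\dots,n$ collapses the left-hand side to $R_{n}e_{n}-R_{0}e_{n}=g_{n}-e_{n}$, which rearranges to the claimed formula (the $n=0$ case being the convention $g_{0}=e_{0}$).

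The one point demanding care --- the main, if modest, obstacle --- is the order of the factors: the product defining $g_{n}$ runs $(1-P_{0})\cdots(1-P_{n-1})$, whereas that in \thmref{pc3} runs in the opposite order $(1-P_{n-1})\cdots(1-P_{0})$. These match up exactly because every $1-P_{i}$ is self-adjoint, so that passing to the adjoint $Q_{n}^{*}$ reverses the order and reproduces $R_{n}$, which is precisely what turns the resolution of the identity $I=\sum_{j}Q_{j}^{*}Q_{j}$ into the rank-one decomposition $I=\sum_{j}|g_{j}\rangle\langle g_{j}|$. Once that bookkeeping is settled, both assertions follow at once.
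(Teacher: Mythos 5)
Your argument is correct and essentially the same as the paper's: both identify $Q_{n}=P_{n}(1-P_{n-1})\cdots(1-P_{0})$ as the rank-one operator $\left|e_{n}\right\rangle \left\langle g_{n}\right|$ with $g_{n}=Q_{n}^{*}e_{n}$, and then invoke \thmref{pc3} together with effectiveness to turn $\left\Vert x\right\Vert ^{2}=\sum_{j}\left\Vert Q_{j}x\right\Vert ^{2}$ (equivalently $I=\sum_{j}Q_{j}^{*}Q_{j}$) into the Parseval identity $\left\Vert x\right\Vert ^{2}=\sum_{j}\left|\left\langle g_{j},x\right\rangle \right|^{2}$. Your telescoping of $R_{k}=R_{k-1}(1-P_{k-1})$ is just a re-packaging of the paper's step-by-step peeling that yields the recursion (\ref{eq:gn}), so no substantive difference remains.
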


\begin{proof}
Recall that 
\[
Q_{n}=P_{n}\left(1-P_{n-1}\right)\cdots\left(1-P_{0}\right),
\]
see (\ref{eq:pc4}). 

Since $P_{n}$ has rank-1, it follows that $Q_{n}$ has the form 
\[
Q_{n}=\left|e_{n}\left\rangle \right\langle g_{n}\right|
\]
for some $g_{n}\in\mathscr{H}$, which in turn is given by 
\begin{eqnarray*}
g_{n} & = & Q_{n}^{*}e_{n}=\left(1-P_{0}\right)\left(1-P_{1}\right)\cdots\left(1-P_{n-1}\right)P_{n}e_{n}\\
 & = & \left(1-P_{0}\right)\left(1-P_{1}\right)\cdots\left(1-P_{n-1}\right)e_{n}\\
 & = & \left(1-P_{0}\right)\left(1-P_{1}\right)\cdots\left(1-P_{n-2}\right)\left(e_{n}-\left\langle e_{n-1},e_{n}\right\rangle e_{n-1}\right)\\
 & = & \left(1-P_{0}\right)\left(1-P_{1}\right)\cdots\left(1-P_{n-2}\right)e_{n}-\left\langle e_{n-1},e_{n}\right\rangle g_{n-1}\\
 & = & \left(1-P_{0}\right)\left(1-P_{1}\right)\cdots\left(1-P_{n-3}\right)e_{n}-\left\langle e_{n-2},e_{n}\right\rangle g_{n-2}-\left\langle e_{n-1},e_{n}\right\rangle g_{n-1}\\
 & \vdots\\
 & = & e_{n}-\sum_{j=0}^{n-1}\left\langle e_{j},e_{n}\right\rangle g_{j}.
\end{eqnarray*}

Now, 
\[
\left\Vert x\right\Vert ^{2}=\sum_{j\in\mathbb{N}_{0}}\left\Vert Q_{j}x\right\Vert ^{2}=\sum_{j\in\mathbb{N}_{0}}\left|\left\langle g_{j},x\right\rangle \right|^{2},\quad x\in\mathscr{H}.
\]
\end{proof}
\begin{cor}
The system $\left\{ \left|e_{j}\left\rangle \right\langle e_{j}\right|\right\} _{j\in\mathbb{N}_{0}}$
is effective iff $\left\{ g_{j}\right\} _{j\in\mathbb{N}_{0}}$ (see
(\ref{eq:gn})) is a Parseval frame in $\mathscr{H}$.
\end{cor}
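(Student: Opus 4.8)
The plan is to reduce the claimed equivalence to the generalized Parseval identity of \thmref{pc3}. Recall that \thmref{pc3} asserts, for any sequence of orthogonal projections $\left\{ P_{j}\right\} _{j\in\mathbb{N}_{0}}$ and every $x\in\mathscr{H}$, the exact decomposition
\[
\left\Vert x\right\Vert ^{2}=\left\Vert T_{n}x\right\Vert ^{2}+\sum_{k=0}^{n}\left\Vert Q_{k}x\right\Vert ^{2},
\]
so that the system is effective (i.e., $T_{n}\xrightarrow{\;s\;}0$) if and only if $\left\Vert x\right\Vert ^{2}=\sum_{j\in\mathbb{N}_{0}}\left\Vert Q_{j}x\right\Vert ^{2}$ for all $x$. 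This holds with no rank hypothesis; the only thing special about the present situation is the shape of the right-hand side when the $P_{j}$ are rank-one.

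Second, I would specialize to $P_{j}=\left|e_{j}\right\rangle \left\langle e_{j}\right|$ with $\left\Vert e_{j}\right\Vert =1$. As computed in the proof of the preceding corollary, $Q_{n}=P_{n}\left(1-P_{n-1}\right)\cdots\left(1-P_{0}\right)$ then factors as $Q_{n}=\left|e_{n}\right\rangle \left\langle g_{n}\right|$ with $g_{n}=Q_{n}^{*}e_{n}=\left(1-P_{0}\right)\cdots\left(1-P_{n-1}\right)e_{n}$, and this $g_{n}$ obeys the recursion (\ref{eq:gn}). Hence $Q_{n}x=\left\langle g_{n},x\right\rangle e_{n}$, and since $\left\Vert e_{n}\right\Vert =1$ we get $\left\Vert Q_{n}x\right\Vert ^{2}=\left|\left\langle g_{n},x\right\rangle \right|^{2}$ for every $x\in\mathscr{H}$.

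Combining the two steps, effectiveness of $\left\{ \left|e_{j}\right\rangle \left\langle e_{j}\right|\right\} _{j\in\mathbb{N}_{0}}$ is equivalent to
\[
\left\Vert x\right\Vert ^{2}=\sum_{j\in\mathbb{N}_{0}}\left|\left\langle g_{j},x\right\rangle \right|^{2},\qquad x\in\mathscr{H},
\]
which is exactly the statement that $\left\{ g_{j}\right\} _{j\in\mathbb{N}_{0}}$ is a Parseval frame for $\mathscr{H}$. The forward implication (effective $\Rightarrow$ Parseval frame) is the content of the preceding corollary; for the converse one reads the same chain backwards: the Parseval identity for $\left\{ g_{j}\right\} $ forces $\sum_{j}\left\Vert Q_{j}x\right\Vert ^{2}=\left\Vert x\right\Vert ^{2}$, hence $\left\Vert T_{n}x\right\Vert \to0$ for all $x$, i.e., $T_{n}\xrightarrow{\;s\;}0$.

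I do not anticipate a real obstacle; the argument is bookkeeping layered on \thmref{pc3}. The one point deserving a word of care is the degenerate case where some $g_{n}=0$ --- which occurs precisely when $e_{n}\in\mathrm{span}\left\{ e_{0},\dots,e_{n-1}\right\} $, so that the corresponding $Q_{n}$ vanishes --- since then that term contributes nothing to either side of the Parseval identity; one either permits zero vectors among the $g_{j}$ or simply deletes those indices. Beyond this, it only remains to spell out the identifications $Q_{n}=\left|e_{n}\right\rangle \left\langle g_{n}\right|$ and $\left\Vert Q_{n}x\right\Vert =\left|\left\langle g_{n},x\right\rangle \right|$, both already indicated above.
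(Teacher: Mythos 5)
Your argument is correct and is essentially the paper's: the equivalence in Theorem \ref{thm:pc3} (effectiveness iff $\left\Vert x\right\Vert ^{2}=\sum_{j}\left\Vert Q_{j}x\right\Vert ^{2}$) combined with the rank-one identification $Q_{n}=\left|e_{n}\left\rangle \right\langle g_{n}\right|$, $\left\Vert Q_{n}x\right\Vert =\left|\left\langle g_{n},x\right\rangle \right|$, established in the proof of the preceding corollary, which is exactly how the paper obtains the iff statement. Your remark on possible zero vectors $g_{n}$ is a reasonable point of care but does not change the argument.
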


\begin{example}
Consider a positive definite function $G\times G\xrightarrow{\;K\;}\mathbb{C}$
on a lca group $G$, where $K\left(x,y\right)=K\left(x-y\right)$
and $K\left(0\right)=1$. Let $\mathscr{H}_{K}$ be the associated
RKHS. 

Fix a discrete subset $\left\{ x_{j}\right\} _{j\in\mathbb{N}_{0}}\subset G$,
and define 
\begin{equation}
P_{j}=1-\left|K_{x_{j}}\left\rangle \right\langle K_{x_{j}}\right|,\quad j\in\mathbb{N}_{0}.\label{eq:pc9}
\end{equation}
\end{example}

\begin{cor}
Assume there exists $0<c<1$ such that 
\begin{equation}
K\left(x_{j}-x_{j-1}\right)^{2}\leq1-c,\quad j\in\mathbb{N}.\label{eq:pc10}
\end{equation}
Then the system $\left\{ P_{j}\right\} _{j\in\mathbb{N}_{0}}$ in
(\ref{eq:pc9}) is effective. 

For the operator valued frame $\left\{ Q_{j}\right\} _{j\in\mathbb{N}_{0}}$
in \thmref{pc3}, it holds that 
\[
rank\left(Q_{j}\right)\leq2,\quad j\in\mathbb{N}_{0}.
\]
\end{cor}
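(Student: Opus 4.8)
The statement packages two assertions — effectiveness of $\left\{ P_{j}\right\} _{j\in\mathbb{N}_{0}}$, and a rank bound on the operators $Q_{j}$ of \thmref{pc3} — and my plan is to deduce both from the rank-one structure of $1-P_{j}$. The first thing I would record is that $K$ being a normalized p.d.\ function forces $\left\Vert K_{x_{j}}\right\Vert _{\mathscr{H}_{K}}^{2}=K\left(x_{j},x_{j}\right)=K\left(0\right)=1$, so each $K_{x_{j}}$ is a unit vector and $E_{j}\coloneqq1-P_{j}=\left|K_{x_{j}}\left\rangle \right\langle K_{x_{j}}\right|$ is the rank-one orthogonal projection onto $\mathbb{C}K_{x_{j}}$, while $P_{j}=1-E_{j}$ projects onto the orthogonal complement.

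For effectiveness, the plan is to check hypothesis (\ref{eq:pc1}) of \propref{pc1} with the very constant $c$ appearing in (\ref{eq:pc10}); \propref{pc1} then yields the conclusion. The key point is that for any $y\in\mathscr{H}_{K}$ the vector $v\coloneqq\left(1-P_{j-1}\right)y=E_{j-1}y=\left\langle K_{x_{j-1}},y\right\rangle K_{x_{j-1}}$ is a scalar multiple $\lambda K_{x_{j-1}}$ of $K_{x_{j-1}}$, so (\ref{eq:pc1}) only has to be tested on the single line $\mathbb{C}K_{x_{j-1}}$, where it reduces to
\[
\left\Vert P_{j}v\right\Vert ^{2}=\left\Vert v\right\Vert ^{2}-\left|\left\langle K_{x_{j}},v\right\rangle \right|^{2}=\left|\lambda\right|^{2}\left(1-\left|K\left(x_{j}-x_{j-1}\right)\right|^{2}\right)
\]
via $P_{j}=1-E_{j}$, the reproducing identity $\left\langle K_{x_{j}},K_{x_{j-1}}\right\rangle =K_{x_{j-1}}\left(x_{j}\right)=K\left(x_{j}-x_{j-1}\right)$, and $\left\Vert K_{x_{j-1}}\right\Vert =1$. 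Since $\left\Vert v\right\Vert ^{2}=\left|\lambda\right|^{2}$, bound (\ref{eq:pc10}) gives $\left\Vert P_{j}\left(1-P_{j-1}\right)y\right\Vert ^{2}\geq c\left\Vert \left(1-P_{j-1}\right)y\right\Vert ^{2}$, which is exactly (\ref{eq:pc1}). (If $K$ is complex-valued, $K\left(x_{j}-x_{j-1}\right)^{2}$ in (\ref{eq:pc10}) is read as $\left|K\left(x_{j}-x_{j-1}\right)\right|^{2}$, which is in any case $\leq K\left(0\right)^{2}=1$.)

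For the rank bound, I would use the factorization from (\ref{eq:pc4}): for $j\geq1$, $Q_{j}=P_{j}\left(1-P_{j-1}\right)\cdots\left(1-P_{0}\right)=P_{j}\,E_{j-1}E_{j-2}\cdots E_{0}$. A composition of operators at least one of which is rank one has rank $\leq1$ — here explicitly $E_{j-1}\cdots E_{0}=\left(\prod_{k=1}^{j-1}\left\langle K_{x_{k}},K_{x_{k-1}}\right\rangle \right)\left|K_{x_{j-1}}\left\rangle \right\langle K_{x_{0}}\right|$ — so $\mathrm{rank}\left(Q_{j}\right)\leq1\leq2$; alternatively $Q_{j}=E_{j-1}\cdots E_{0}-E_{j}E_{j-1}\cdots E_{0}$ displays $Q_{j}$ as a difference of two operators of rank $\leq1$. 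For $j=0$ one reads the estimate with the convention $Q_{0}=P_{0}$ from \thmref{pc3}, so the substantive content is the case $j\geq1$. I do not expect a genuine obstacle: the whole corollary rests on the one observation that $1-P_{j-1}$ is rank one, which simultaneously collapses the verification of (\ref{eq:pc1}) to the displayed identity and makes the rank count on $Q_{j}$ transparent; the only mildly delicate points are the interpretation of (\ref{eq:pc10}) when $K$ is complex and the reading of the rank claim at the index $j=0$.
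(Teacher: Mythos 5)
Your proposal is correct and follows essentially the same route as the paper: the paper likewise verifies condition (\ref{eq:pc1}) of Proposition \ref{prop:pc1} by using that $1-P_{j-1}$ is the rank-one projection onto the unit vector $K_{x_{j-1}}$, which reduces (\ref{eq:pc1}) to $1-K\left(x_{j}-x_{j-1}\right)^{2}\geq c$, and it obtains the rank bound by expanding $Q_{j}f$ explicitly to see that it lies in the span of $K_{x_{j-1}}$ and $K_{x_{j}}$. Your side remarks---that for $j\geq1$ the rank is in fact at most one, and that at $j=0$ the bound must be read with the convention $Q_{0}=P_{0}$---are accurate refinements of the same computation, not a different argument.
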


\begin{proof}
In the current setting, condition (\ref{eq:pc1}) in \propref{pc1}
translates to 
\begin{gather*}
\left\Vert \left(1-\left|K_{x_{j}}\left\rangle \right\langle K_{x_{j}}\right|\right)K_{x_{j-1}}\right\Vert _{\mathscr{H}_{K}}^{2}\geq c\left\Vert K_{x_{j-1}}\right\Vert _{\mathscr{H}_{K}}^{2}=c\\
\Updownarrow\\
\left\Vert K_{x_{j-1}}-K\left(x_{j}-x_{j-1}\right)K_{x_{j}}\right\Vert _{\mathscr{H}_{K}}^{2}\geq c\\
\Updownarrow\\
K\left(x_{j}-x_{j-1}\right)^{2}\leq1-c.
\end{gather*}
Therefore $\left\{ P_{j}\right\} _{j\in\mathbb{N}_{0}}$ is effective,
provided that (\ref{eq:pc10}) is satisfied. 

Let $Q_{j}$ be as in \thmref{pc3}. That is, for all $f\in\mathscr{H}_{K}$
(see \corref{psin}), 
\begin{eqnarray*}
Q_{j}f & = & P_{j}\left(1-P_{j-1}\right)\cdots\left(1-P_{0}\right)f\\
 & = & \left(1-\left|K_{x_{j}}\left\rangle \right\langle K_{x_{j}}\right|\right)\left(\left|K_{x_{j-1}}\left\rangle \right\langle K_{x_{j-1}}\right|\right)\cdots\left(\left|K_{x_{0}}\left\rangle \right\langle K_{x_{0}}\right|\right)f\\
 & = & K_{x_{j-1}}K\left(x_{j-1}-x_{j-2}\right)\cdots K\left(x_{1}-x_{0}\right)f\left(x_{0}\right)\\
 & \quad & -K_{x_{j}}K\left(x_{k_{j}}-x_{k_{j-1}}\right)K\left(x_{j-1}-x_{j-2}\right)\cdots K\left(x_{1}-x_{0}\right)f\left(x_{0}\right).
\end{eqnarray*}

In particular, 
\[
Q_{j}f\in span\left\{ K_{x_{j-1}},K_{x_{j}}\right\} 
\]
and so $rank\left(Q_{j}\right)\leq2$. 
\end{proof}

\section{\label{sec:pw}Paley Wiener spaces}
\begin{defn}[Paley Wiener spaces (see, e.g., \cite{MR3711878})]
 Let $\Omega$ be a bounded subset of $\mathbb{R}^{d}$, and let
$L^{2}=L^{2}\left(\lambda_{d}\right)=L^{2}\left(\mathbb{R}^{d},\lambda_{d}\right)$
with $\lambda_{d}=$ the usual Lebesgue measure on $\mathbb{R}^{d}$.
Set 
\[
\text{PW}\left(\Omega\right)\coloneqq\left\{ f\in L^{2}:supp(\widehat{f})\subseteq\Omega\right\} 
\]
where 
\begin{equation}
\widehat{f}\left(\xi\right)=\int_{\mathbb{R}^{d}}e^{-i\xi\cdot x}f\left(x\right)dx\label{eq:pw1}
\end{equation}
is the Fourier transform, and $x=\left(x_{1},\cdots,x_{d}\right)\in\mathbb{R}^{d}$,
$dx=\lambda_{d}\left(x\right)$. 
\end{defn}

\begin{lem}
$\text{PW}\left(\Omega\right)$ is a RKHS. 
\end{lem}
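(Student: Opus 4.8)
The plan is to verify the two properties that characterize a reproducing kernel Hilbert space of functions on $\mathbb{R}^d$: that $\text{PW}\left(\Omega\right)$, equipped with the inner product inherited from $L^2$, is itself a Hilbert space; and that for every $x\in\mathbb{R}^d$ the evaluation functional $f\mapsto f(x)$ is bounded on $\text{PW}\left(\Omega\right)$, in fact with a bound independent of $x$.

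First I would show that $\text{PW}\left(\Omega\right)$ is a closed subspace of $L^2$. With the normalization in (\ref{eq:pw1}), the Fourier transform is, up to the constant $(2\pi)^{d/2}$, a unitary operator on $L^2$; Plancherel reads $\|\widehat f\|_{L^2}^2=(2\pi)^d\|f\|_{L^2}^2$. Under this unitary, the condition $\text{supp}(\widehat f)\subseteq\Omega$ says that $\widehat f$ vanishes a.e.\ off $\Omega$, so $\text{PW}\left(\Omega\right)$ is carried onto the subspace $\{g\in L^2:g=0\text{ a.e. on }\mathbb{R}^d\setminus\Omega\}\cong L^2(\Omega)$, which is visibly closed. (Since $\text{supp}(\widehat f)$ is closed there is no loss in taking $\Omega$ itself closed, hence compact; in any case only $\lambda_d(\Omega)<\infty$ — which holds because $\Omega$ is bounded — matters below.) Consequently $\text{PW}\left(\Omega\right)$ is a Hilbert space with $\langle f,g\rangle=\int_{\mathbb{R}^d}f\overline g\,d\lambda_d$.

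Next I would establish boundedness of point evaluations via Fourier inversion. For $f\in\text{PW}\left(\Omega\right)$ one has, for a.e.\ $x$,
\[
f(x)=\frac{1}{(2\pi)^d}\int_{\mathbb{R}^d}e^{i\xi\cdot x}\widehat f(\xi)\,d\xi=\frac{1}{(2\pi)^d}\int_{\Omega}e^{i\xi\cdot x}\widehat f(\xi)\,d\xi .
\]
The right-hand side is continuous in $x$ (dominated convergence, since $|e^{i\xi\cdot x}\widehat f(\xi)\chi_\Omega(\xi)|=|\widehat f(\xi)|\chi_\Omega(\xi)\in L^1$ by Cauchy--Schwarz and $\lambda_d(\Omega)<\infty$), so it is the canonical continuous representative of $f$, and for this representative Cauchy--Schwarz together with Plancherel gives
\[
|f(x)|\le\frac{1}{(2\pi)^d}\,\|e^{i\xi\cdot x}\chi_\Omega\|_{L^2}\,\|\widehat f\|_{L^2}=\frac{\lambda_d(\Omega)^{1/2}}{(2\pi)^d}\,\|\widehat f\|_{L^2}=\frac{\lambda_d(\Omega)^{1/2}}{(2\pi)^{d/2}}\,\|f\|_{L^2},
\]
for all $x\in\mathbb{R}^d$. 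Thus $f\mapsto f(x)$ is a bounded linear functional on $\text{PW}\left(\Omega\right)$ for each $x$, so by Riesz's theorem $\text{PW}\left(\Omega\right)$ is a RKHS; one can moreover read off the kernel explicitly, $K(x,y)=\frac{1}{(2\pi)^d}\int_{\Omega}e^{i\xi\cdot(x-y)}\,d\xi$, since $\widehat{K(\cdot,y)}(\xi)=e^{-i\xi\cdot y}\chi_\Omega(\xi)$ lies in $L^2(\Omega)$ and a Plancherel computation yields $\langle K(\cdot,y),f\rangle_{L^2}=f(y)$.

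There is no serious obstacle here: the argument is Plancherel plus Cauchy--Schwarz plus $\lambda_d(\Omega)<\infty$. The only point requiring a word of care is the standard one for spaces of ``functions'' defined as $L^2$-classes — one must pass from the equivalence class $f$ to its continuous, pointwise-defined representative before the symbol ``$f(x)$'', and hence the reproducing property, is literally meaningful — and, relatedly, whether one reads ``$\text{supp}(\widehat f)\subseteq\Omega$'' literally or as ``$\widehat f=0$ a.e.\ off $\Omega$'' (the two agree after replacing $\Omega$ by its closure).
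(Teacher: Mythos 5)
Your proposal is correct and follows essentially the same route as the paper: Fourier inversion on the bounded set $\Omega$, Cauchy--Schwarz with $\lambda_d(\Omega)<\infty$ to bound point evaluations, and identification of the kernel as the Fourier transform of $\chi_\Omega$ evaluated at $x-y$. The extra care you take (closedness of $\text{PW}(\Omega)$ in $L^2$, the continuous representative, and the precise $(2\pi)^{d}$ constants) only tightens details the paper treats briskly.
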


\begin{proof}
Let $f\in\text{PW}\left(\Omega\right)$. Since $\Omega$ is bounded
and $\lambda_{d}\left(\Omega\right)<\infty$, then 
\begin{equation}
f\left(x\right)=\frac{1}{\left(2\pi\right)^{d}}\int_{\Omega}e^{i\xi\cdot x}\widehat{f}\left(\xi\right)d\xi\label{eq:pw2}
\end{equation}
where $\xi=\left(\xi_{1},\cdots,\xi_{d}\right)\in\mathbb{R}^{d}$,
and $d\xi=\lambda_{d}\left(\xi\right)$.

For $f\in\text{PW}\left(\Omega\right)$, we have 
\begin{align}
\left\Vert f\right\Vert _{\text{PW}\left(\Omega\right)}^{2} & \coloneqq\left\Vert f\right\Vert _{L^{2}\left(\mathbb{R}^{d}\right)}^{2}=\int_{\mathbb{R}^{d}}\left|f\left(x\right)\right|^{2}dx\nonumber \\
 & =\int_{\mathbb{R}^{d}}\big|\widehat{f}\left(\xi\right)\big|^{2}d\xi=\int_{\Omega}\big|\widehat{f}\big|^{2}d\lambda<\infty.\label{eq:pw3}
\end{align}
From (\ref{eq:pw2}), 
\[
\left|f\left(x\right)\right|\leq\frac{1}{\left(2\pi\right)^{d}}\lambda_{d}\left(\Omega\right)\left(\int_{\Omega}\big|\widehat{f}\big|^{2}d\lambda\right)^{1/2}=C_{d}\left\Vert f\right\Vert _{\text{PW}\left(\Omega\right)}
\]
where $c_{d}\coloneqq\frac{\lambda_{d}\left(\Omega\right)}{\left(2\pi\right)^{d}}$
. That is, point evaluation at every $x\in\mathbb{R}^{d}$ is a bounded
linear functional on $\text{PW}\left(\Omega\right)$. 

Moreover, the reproducing kernel is given by 
\[
K_{\text{PW}\left(\Omega\right)}\left(x,y\right)\coloneqq\widehat{\chi}_{\Omega}\left(x-y\right),\quad x,y\in\mathbb{R}^{d},
\]
where $\widehat{\chi}_{\Omega}$ denotes the Fourier transform of
the indicator function 
\[
\chi_{\Omega}\left(\xi\right)=\begin{cases}
1 & \xi\in\Omega\\
0 & \xi\in\mathbb{R}^{d}\backslash\Omega.
\end{cases}
\]
\end{proof}
Proposed duality:
\begin{equation}
\xymatrix{\boxed{\text{\ensuremath{\begin{matrix}\text{countably discrete subset \ensuremath{V\subset\mathbb{R}^{d}} with}\\
\text{generalized interpolation properties}
\end{matrix}}}}\ar@{<=>}[rr] &  & \boxed{\text{\ensuremath{\begin{matrix}\Omega\subset\mathbb{R}^{d}\\
\lambda_{d}\left(\Omega\right)<\infty
\end{matrix}}}}}
\label{eq:P5}
\end{equation}
When $\Omega$ and $V$ are as above, see (\ref{eq:P5}), consider
the question of existence of finite constants $0<A\leq B<\infty$
such that 
\begin{equation}
A\left\Vert f\right\Vert _{\text{PW}\left(\Omega\right)}^{2}\leq\sum_{x\in V}\left|f\left(x\right)\right|^{2}\leq B\left\Vert f\right\Vert _{\text{PW}\left(\Omega\right)}^{2},\quad\forall f\in\text{PW}\left(\Omega\right);\label{eq:pw6}
\end{equation}
or with a weight function $w:V\rightarrow\mathbb{R}_{+}$, 
\begin{equation}
A\left\Vert f\right\Vert _{\text{PW}\left(\Omega\right)}^{2}\leq\sum_{x\in V}w\left(x\right)\left|f\left(x\right)\right|^{2}\leq B\left\Vert f\right\Vert _{\text{PW}\left(\Omega\right)}^{2},\quad\forall f\in\text{PW}\left(\Omega\right).\label{eq:pw7}
\end{equation}

\begin{defn}
Give $\Omega\subset\mathbb{R}^{d}$, $\lambda_{d}\left(\Omega\right)<\infty$,
set 
\begin{align*}
\text{SAMP}\left(\Omega\right) & \coloneqq\big\{ V:V\subset\mathbb{R}^{d}\:\text{countably discrete, and \ensuremath{\left(\ref{eq:pw6}\right)} or \ensuremath{\left(\ref{eq:pw7}\right)}}\\
 & \quad\quad\text{holds for some finite constants \ensuremath{A,B,} \ensuremath{0<A\leq B<\infty}}\big\};\\
\text{FREBD}\left(V\right) & \coloneqq\big\{\Omega:\lambda_{d}\left(\Omega\right)<\infty,\:\text{and \ensuremath{\left(\ref{eq:pw6}\right)} or \ensuremath{\left(\ref{eq:pw7}\right)} holds for some }\\
 & \quad\quad\text{constants \ensuremath{A,B,} \ensuremath{0<A\leq B<\infty}}\big\}.
\end{align*}
\end{defn}

\begin{problem*}
(i) Given $\Omega$, find $\text{SAMP}\left(\Omega\right)$; (ii)
Given $V$, find $\text{FREBD}\left(V\right)$. 
\end{problem*}

\section{\label{sec:gp}Gaussian Processes and Gaussian Hilbert Spaces}

By a probability space, we mean a triple $\left(\Omega,\mathscr{F},\mathbb{P}\right)$
where 
\begin{itemize}
\item $\Omega$: set of sample points,
\item $\mathscr{F}$: $\sigma$-algebra of events (subsets of $\Omega$),
\item $\mathbb{P}$: a probability measure defined on $\mathscr{F}$. 
\end{itemize}
A random variable 
\begin{equation}
K:\Omega\rightarrow\mathbb{R}\:\left(\text{\ensuremath{\mathbb{C}}, or a Hilbert space}\right)
\end{equation}
is a measurable function defined on $\left(\Omega,\mathscr{F}\right)$,
i.e., we require that for Borel sets $B$ (in $\mathbb{R}$, or $\mathbb{C}$),
and cylinder sets (referring to a fixed Hilbert space $\mathscr{H}$)
we have $K^{-1}\left(B\right)\in\mathscr{F}$ where 
\begin{equation}
K^{-1}\left(B\right)=\left\{ \omega\in\Omega:K\left(\omega\right)\in B\right\} .
\end{equation}
The distribution $\mu_{K}$ of $K$ is the measure 
\begin{equation}
\mu_{K}\coloneqq\mathbb{P}\circ K^{-1}.
\end{equation}
If $\mu_{K}$ is Gaussian, we say that $K$ is a Gaussian random variable.
A Gaussian process is a system $\left\{ K_{x}:x\in X\right\} $ of
random variables (refer to $\left(\Omega,\mathscr{F},\mathbb{P}\right)$),
indexed by some set $X$, in this case $X$. 

Here we shall restrict to the case of a Gaussian process, and we shall
assume 
\begin{equation}
\mathbb{E}\left(K_{x}\right)=0,\quad\forall x\in X;
\end{equation}
where 
\begin{equation}
\mathbb{E}\left(\cdot\right)=\int_{\Omega}\left(\cdot\right)d\mathbb{P}
\end{equation}
denotes expectation w.r.t. $\mathbb{P}$.

If $\mu_{K}\in N\left(0,1\right)$, i.e., 
\begin{equation}
\mu_{K}\left(t\right)=\frac{1}{\sqrt{2\pi}}e^{-t^{2}/2},\quad t\in\mathbb{R},
\end{equation}
we say that $K$ (or $\mu_{K}$) is a standard Gaussian. 
\begin{lem}
Let $\left\{ Z_{n}\right\} _{n\in\mathbb{N}_{0}}$ be a system of
independent identically distributed $N\left(0,1\right)$s (i.i.d $N\left(0,1\right)$).
Let $\mathscr{H}$ be a Hilbert space, and $\left\{ Q_{n}\right\} _{n\in\mathbb{N}_{0}}$
a system of projections as in \thmref{pc3}, i.e., 
\begin{equation}
\sum_{n\in\mathbb{N}_{0}}\left\langle Q_{n}u,Q_{n}v\right\rangle _{\mathscr{H}}=\left\langle u,v\right\rangle _{\mathscr{H}},\quad\forall u,v\in\mathscr{H}.\label{eq:G7}
\end{equation}
Then 
\begin{equation}
W\left(\cdot\right)=W^{\left(Q,\mathscr{H}\right)}\left(\cdot\right)\coloneqq\sum_{n\in\mathbb{N}_{0}}Q_{n}Z_{n}\left(\cdot\right)\label{eq:G8}
\end{equation}
 defines an operator valued Gaussian process, and 
\begin{equation}
\mathbb{E}\left(\left\langle W\left(\cdot\right)u,W\left(\cdot\right)v\right\rangle _{\mathscr{H}}\right)=\left\langle u,v\right\rangle ,\quad\forall u,v\in\mathscr{H}.\label{eq:G9}
\end{equation}
\end{lem}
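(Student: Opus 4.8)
The plan is to read $W(\cdot)=\sum_{n\in\mathbb{N}_{0}}Q_{n}Z_{n}(\cdot)$ not as a bounded operator attached to each sample point, but through its action on a fixed vector $u\in\mathscr{H}$, setting $W(\cdot)u\coloneqq\sum_{n\in\mathbb{N}_{0}}Z_{n}(\cdot)\,Q_{n}u$. First I would show this series converges in $L^{2}\left(\Omega,\mathbb{P};\mathscr{H}\right)$. Because the $Z_{n}$ are i.i.d.\ $N(0,1)$, they form an orthonormal set in $L^{2}(\Omega,\mathbb{P})$, so the partial sums $S_{N}u=\sum_{n=0}^{N}Z_{n}Q_{n}u$ satisfy
\[
\mathbb{E}\left\Vert S_{N}u-S_{M}u\right\Vert _{\mathscr{H}}^{2}=\sum_{n=M+1}^{N}\left\Vert Q_{n}u\right\Vert _{\mathscr{H}}^{2},
\]
and the right side tends to $0$ since, putting $u=v$ in hypothesis (\ref{eq:G7}), one has $\sum_{n}\left\Vert Q_{n}u\right\Vert _{\mathscr{H}}^{2}=\left\Vert u\right\Vert _{\mathscr{H}}^{2}<\infty$. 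Hence $W(\cdot)u\in L^{2}(\Omega,\mathbb{P};\mathscr{H})$ is well defined, depends linearly on $u$, and $\mathbb{E}\left\Vert W(\cdot)u\right\Vert _{\mathscr{H}}^{2}=\left\Vert u\right\Vert _{\mathscr{H}}^{2}$.

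Next I would verify the Gaussian property. Each $S_{N}u$ is an $\mathscr{H}$-valued Gaussian vector, being a finite linear combination of the independent scalar Gaussians $Z_{n}$ with deterministic vector coefficients $Q_{n}u$; concretely, for each $\xi\in\mathscr{H}$ the scalar $\left\langle S_{N}u,\xi\right\rangle =\sum_{n=0}^{N}Z_{n}\left\langle Q_{n}u,\xi\right\rangle $ is centered Gaussian. Since the class of centered Gaussian scalars is closed in $L^{2}(\Omega,\mathbb{P})$, the limit $\left\langle W(\cdot)u,\xi\right\rangle $ is centered Gaussian, and more generally every finite linear combination $\sum_{j}\left\langle W(\cdot)u_{j},\xi_{j}\right\rangle =\sum_{n}Z_{n}\sum_{j}\left\langle Q_{n}u_{j},\xi_{j}\right\rangle $ is centered Gaussian. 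Thus $\left\{ W(\cdot)u:u\in\mathscr{H}\right\} $ is a centered Gaussian process (a Gaussian Hilbert space), which is exactly the operator-valued Gaussian process meant in (\ref{eq:G8}).

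For the covariance identity (\ref{eq:G9}) I would again argue at the level of partial sums. Using $\mathbb{E}(Z_{n}Z_{m})=\delta_{n,m}$,
\[
\mathbb{E}\left\langle S_{N}u,S_{N}v\right\rangle _{\mathscr{H}}=\sum_{n=0}^{N}\sum_{m=0}^{N}\mathbb{E}(Z_{n}Z_{m})\left\langle Q_{n}u,Q_{m}v\right\rangle _{\mathscr{H}}=\sum_{n=0}^{N}\left\langle Q_{n}u,Q_{n}v\right\rangle _{\mathscr{H}}\;\longrightarrow\;\left\langle u,v\right\rangle _{\mathscr{H}}
\]
by (\ref{eq:G7}). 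On the other hand, from $S_{N}u\to W(\cdot)u$, $S_{N}v\to W(\cdot)v$ in $L^{2}(\Omega,\mathbb{P};\mathscr{H})$ and the bilinear estimate $\big|\left\langle S_{N}u,S_{N}v\right\rangle -\left\langle W(\cdot)u,W(\cdot)v\right\rangle \big|\leq\left\Vert S_{N}u-W(\cdot)u\right\Vert \left\Vert S_{N}v\right\Vert +\left\Vert W(\cdot)u\right\Vert \left\Vert S_{N}v-W(\cdot)v\right\Vert $, taking expectations and applying Cauchy--Schwarz in $L^{2}(\Omega)$ gives $\mathbb{E}\left\langle S_{N}u,S_{N}v\right\rangle \to\mathbb{E}\left\langle W(\cdot)u,W(\cdot)v\right\rangle $ (note $\left\langle W(\cdot)u,W(\cdot)v\right\rangle \in L^{1}(\Omega,\mathbb{P})$ by the same estimate). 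Comparing the two limits yields (\ref{eq:G9}).

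The main obstacle is conceptual rather than computational: the sum $\sum_{n}Q_{n}Z_{n}$ does not converge in operator norm, and $W(\omega)$ need not be a bounded operator for any single $\omega$, so ``$W$ is an operator valued Gaussian process'' must be understood in the cylindrical sense above --- $W(\cdot)u$ is defined for each fixed $u$ up to a $\mathbb{P}$-null set that may depend on $u$, exactly as for white noise. Once this interpretation is fixed, everything rests on the orthonormality of $\{Z_{n}\}$ in $L^{2}(\Omega,\mathbb{P})$ together with hypothesis (\ref{eq:G7}), and the steps above are routine.
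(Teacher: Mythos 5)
Your proposal is correct and takes essentially the same route as the paper's own (sketched) proof: expand $\mathbb{E}\left\langle W(\cdot)u,W(\cdot)v\right\rangle _{\mathscr{H}}$ as a double sum, use $\mathbb{E}\left(Z_{n}Z_{m}\right)=\delta_{n,m}$, and invoke (\ref{eq:G7}). You additionally supply the justifications the paper's sketch leaves implicit --- $L^{2}\left(\Omega,\mathbb{P};\mathscr{H}\right)$-convergence of the series defining $W(\cdot)u$, Gaussianity of the limit, and the cylindrical interpretation that legitimizes interchanging expectation with the infinite sums --- which strengthens rather than changes the argument.
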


\begin{proof}[Proof sketch]
Fix $u,v\in\mathscr{H}$; then 
\begin{eqnarray*}
\text{LHS}_{\left(\ref{eq:G9}\right)} & \underset{\text{by \ensuremath{\left(\ref{eq:G8}\right)}}}{=} & \underset{\mathbb{N}_{0}\times\mathbb{N}_{0}}{\sum\sum}\left\langle Q_{n}u,Q_{m}v\right\rangle _{\mathscr{H}}\underset{=\delta_{n,m}}{\underbrace{\mathbb{E}\left(Z_{n}Z_{m}\right)}}\\
 & = & \sum_{n\in\mathbb{N}_{0}}\left\langle Q_{n}u,Q_{n}v\right\rangle _{\mathscr{H}}\\
 & \underset{\text{by \ensuremath{\left(\ref{eq:G7}\right)}}}{=} & \left\langle u,v\right\rangle _{\mathscr{H}}.
\end{eqnarray*}
\end{proof}
\begin{cor}
Let $\left\{ Q_{n}\right\} _{n\in\mathbb{N}_{0}}$ be an effective
system in $\mathscr{H}_{K}$, where $K:X\times X\rightarrow\mathbb{R}$
is a given p.d. kernel and $\mathscr{H}_{K}$ the associated RKHs.
Then $W$ from (\ref{eq:G8}) has the property that 
\[
K\left(x,y\right)=\mathbb{E}\left(\left\langle W\left(\cdot\right)K_{x},W\left(\cdot\right)K_{y}\right\rangle _{\mathscr{H}_{K}}\right).
\]
\end{cor}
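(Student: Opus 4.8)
The plan is to specialize the preceding lemma (the one establishing \eqref{eq:G8}--\eqref{eq:G9}) to the Hilbert space $\mathscr{H}=\mathscr{H}_{K}$ and then to evaluate the resulting identity on the feature vectors $u=K_{x}$, $v=K_{y}$. First I would observe that, since $\left\{ Q_{n}\right\} _{n\in\mathbb{N}_{0}}$ is assumed effective in $\mathscr{H}_{K}$ (\defref{efp}), \thmref{pc3} applies and yields the generalized Parseval identity \eqref{eq:pc6}, namely $\sum_{n\in\mathbb{N}_{0}}\left\langle Q_{n}u,Q_{n}v\right\rangle _{\mathscr{H}_{K}}=\left\langle u,v\right\rangle _{\mathscr{H}_{K}}$ for all $u,v\in\mathscr{H}_{K}$. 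This is precisely hypothesis \eqref{eq:G7}, so the lemma applies verbatim with $\mathscr{H}=\mathscr{H}_{K}$: the series $W\left(\cdot\right)=\sum_{n\in\mathbb{N}_{0}}Q_{n}Z_{n}\left(\cdot\right)$ defines an operator-valued Gaussian process on $\mathscr{H}_{K}$, and \eqref{eq:G9} holds, i.e. $\mathbb{E}\left(\left\langle W\left(\cdot\right)u,W\left(\cdot\right)v\right\rangle _{\mathscr{H}_{K}}\right)=\left\langle u,v\right\rangle _{\mathscr{H}_{K}}$ for all $u,v\in\mathscr{H}_{K}$.

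Next I would put $u=K_{x}=K\left(\cdot,x\right)$ and $v=K_{y}=K\left(\cdot,y\right)$, which are legitimate elements of $\mathscr{H}_{K}$ by the reproducing axiom. By the reproducing property \eqref{eq:a6} (and the symmetry of the real-valued kernel $K$), $\left\langle K_{x},K_{y}\right\rangle _{\mathscr{H}_{K}}=K\left(y,x\right)=K\left(x,y\right)$. Substituting into \eqref{eq:G9} gives $\mathbb{E}\left(\left\langle W\left(\cdot\right)K_{x},W\left(\cdot\right)K_{y}\right\rangle _{\mathscr{H}_{K}}\right)=K\left(x,y\right)$, which is the assertion.

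There is essentially no obstacle here: the one technical point, the interchange of $\mathbb{E}$ with the double sum arising from $W=\sum_{n}Q_{n}Z_{n}$, is already handled inside the proof of the lemma, resting on $\mathbb{E}\left(Z_{n}Z_{m}\right)=\delta_{n,m}$ (pairwise $L^{2}\left(\Omega,\mathbb{P}\right)$-orthogonality of the i.i.d.\ $N\left(0,1\right)$ family) together with $\sum_{n}\left\Vert Q_{n}u\right\Vert _{\mathscr{H}_{K}}^{2}=\left\Vert u\right\Vert _{\mathscr{H}_{K}}^{2}<\infty$ from \eqref{eq:pc7}, so that $\sum_{n}Q_{n}Z_{n}u$ converges in $L^{2}\left(\Omega;\mathscr{H}_{K}\right)$ and Fubini is justified. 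Thus the content of the corollary is merely the observation that an effective projection system in $\mathscr{H}_{K}$ manufactures, via i.i.d.\ standard Gaussians, a Gaussian process whose covariance reproduces the kernel $K$ --- i.e.\ a concrete RKHS realization of the Gaussian process associated with $K$.
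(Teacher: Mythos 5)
Your proof is correct and is exactly the argument the paper intends: the paper states this corollary without a separate proof because it is the immediate specialization of the preceding lemma to $\mathscr{H}=\mathscr{H}_{K}$ (with the hypothesis \eqref{eq:G7} supplied by effectiveness via \thmref{pc3}), followed by evaluating \eqref{eq:G9} at $u=K_{x}$, $v=K_{y}$ and invoking the reproducing property $\left\langle K_{x},K_{y}\right\rangle _{\mathscr{H}_{K}}=K\left(x,y\right)$. Your remarks on the interchange of $\mathbb{E}$ with the sum match the justification already contained in the lemma's proof sketch.
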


We recall the following theorem of Kolmogorov. It states that there
is a 1-1 correspondence between p.d. kernels on a set and mean zero
Gaussian processes indexed by the set. One direction is easy, and
the other is the deep part:
\begin{thm}[Kolmogorov]
\label{thm:kol} Let $X$ be a set. A function $K:X\times X\rightarrow\mathbb{C}$
is positive definite if and only if there is a Gaussian process $\left\{ W_{x}\right\} _{x\in X}$
realized in $L^{2}\left(\Omega,\mathscr{F},\mathbb{P}\right)$ with
mean zero, such that 
\begin{equation}
K\left(x,y\right)=\mathbb{E}\left[\overline{W}_{x}W_{y}\right].\label{eq:K1}
\end{equation}
 
\end{thm}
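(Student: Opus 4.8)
I would prove the two implications separately. The forward (``only if'') direction is the routine one and uses no Gaussianity, only square-integrability; the converse is where the real work lies, and I would build the process explicitly from the RKHS together with the frame machinery already developed in the paper (rather than quoting Kolmogorov's consistency theorem, though I would mention that route).

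\textbf{Easy direction.} Suppose a mean-zero process $\{W_x\}_{x\in X}\subset L^{2}(\Omega,\mathscr{F},\mathbb{P})$ satisfies $K(x,y)=\mathbb{E}[\overline{W}_{x}W_{y}]$. Then for every finite family $\{x_k\}_{1}^{n}\subset X$ and scalars $\{\alpha_k\}_{1}^{n}\subset\mathbb{C}$,
\[
\sum_{j,k}\overline{\alpha_j}\,\alpha_k\,K(x_j,x_k)=\sum_{j,k}\overline{\alpha_j}\,\alpha_k\,\mathbb{E}\bigl[\overline{W}_{x_j}W_{x_k}\bigr]=\mathbb{E}\Bigl[\bigl|\textstyle\sum_{k}\alpha_k W_{x_k}\bigr|^{2}\Bigr]\ge 0,
\]
so $K$ is positive definite. (The interchange of the finite sum with $\mathbb{E}$ is trivial, and each $W_{x_k}\in L^{2}$ is part of the hypothesis.)

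\textbf{Converse (the substantive part).} Given $K$ p.d., form the RKHS $\mathscr{H}_K$ and fix a Parseval frame (e.g.\ an ONB) $\{f_i\}_{i\in\mathbb{N}_{0}}$ for $\mathscr{H}_K$. As in $(\ref{eq:A11})$, expansion in the frame gives $K(x,y)=\langle K_x,K_y\rangle_{\mathscr{H}_K}=\sum_{i}f_i(x)\overline{f_i(y)}$; in particular $\sum_i|f_i(x)|^{2}=K(x,x)<\infty$ for each $x$. On the countable product space $(\Omega,\mathscr{F},\mathbb{P})$ carrying an i.i.d.\ sequence $\{Z_i\}_{i\in\mathbb{N}_{0}}$ of standard complex Gaussians, normalized so that $\mathbb{E}[Z_i]=0$, $\mathbb{E}[\overline{Z_i}Z_j]=\delta_{ij}$, $\mathbb{E}[Z_iZ_j]=0$ (in the real-valued case take real i.i.d.\ $N(0,1)$'s and $\mathbb{E}[Z_iZ_j]=\delta_{ij}$), set
\[
W_x:=\sum_{i\in\mathbb{N}_{0}}\overline{f_i(x)}\,Z_i .
\]
Since $\sum_i|f_i(x)|^{2}<\infty$, the partial sums are Cauchy in $L^{2}(\Omega)$, so $W_x\in L^{2}(\Omega,\mathscr{F},\mathbb{P})$ is well defined with $\mathbb{E}[W_x]=0$; and using $L^{2}$-continuity of the inner product,
\[
\mathbb{E}\bigl[\overline{W}_x W_y\bigr]=\sum_{i,j}f_i(x)\overline{f_j(y)}\,\mathbb{E}\bigl[\overline{Z_i}Z_j\bigr]=\sum_{i}f_i(x)\overline{f_i(y)}=K(x,y),
\]
which is $(\ref{eq:K1})$.

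\textbf{Gaussianity and the main obstacle.} It remains to see that $\{W_x\}$ is genuinely a Gaussian process. Every finite linear combination $\sum_k\alpha_k W_{x_k}=\sum_i\bigl(\sum_k\alpha_k\overline{f_i(x_k)}\bigr)Z_i$ is an $L^{2}(\Omega)$-limit of finite linear combinations of the independent Gaussians $Z_i$, hence Gaussian, because the class of Gaussian random variables is closed under $L^{2}$-limits; equivalently, the finite-dimensional distributions of $\{W_x\}$ are centered Gaussians with covariance matrices $\bigl(K(x_j,x_k)\bigr)_{j,k}$. I expect this point --- passing from ``the series converges in $L^{2}$ with the right second moments'' to ``it defines a jointly Gaussian family'' --- to be the only genuine subtlety, and it is precisely where the deep content of Kolmogorov's theorem sits. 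An alternative and equivalent way to supply it is to invoke Kolmogorov's consistency (extension) theorem directly: positive definiteness of $K$ makes each matrix $\bigl(K(x_j,x_k)\bigr)_{j,k}$ positive semidefinite, hence a legitimate Gaussian covariance, and the associated centered Gaussian finite-dimensional distributions are manifestly consistent under coordinate restriction, so a process with those marginals exists on $\mathbb{R}^{X}$ (resp.\ $\mathbb{C}^{X}$). I would present the frame construction as the main argument and record the consistency-theorem route as a remark.
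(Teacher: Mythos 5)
Your easy direction coincides with the one the paper writes out, but for the substantive implication the two texts diverge: the paper does not prove it at all, deferring to \cite{PaSc75}, whereas you construct the process explicitly, setting $W_x=\sum_i \overline{f_i(x)}\,Z_i$ for a Parseval frame $\{f_i\}$ of $\mathscr{H}_K$ and i.i.d.\ standard Gaussians $\{Z_i\}$, using the expansion $K(x,y)=\sum_i f_i(x)\overline{f_i(y)}$ of (\ref{eq:A11}) together with $\sum_i|f_i(x)|^2=K(x,x)<\infty$, and closing the argument with the fact that $L^2$-limits of (finite linear combinations of) Gaussians are Gaussian, so the family is jointly Gaussian with the prescribed covariances. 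This is a correct and genuinely different route: it is self-contained, it is exactly a Karhunen--Lo\`eve-type representation in the spirit of the paper's frame machinery and of its later lemma building $W=\sum_n Q_nZ_n$ from an effective system, and your closing remark that the consistency-theorem route is equivalent is accurate; what the paper's citation buys instead is brevity and full generality without any construction.

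One caveat you should repair: you take the frame indexed by $\mathbb{N}_0$ and ``the countable product space,'' which tacitly assumes $\mathscr{H}_K$ is separable. For an arbitrary set $X$ this can fail (the paper's own remark says the index set is \emph{usually} $\mathbb{N}_0$). The fix is routine and worth stating: choose an ONB $\{f_i\}_{i\in I}$ with $I$ possibly uncountable, take an i.i.d.\ family $\{Z_i\}_{i\in I}$ on the product space $\mathbb{R}^{I}$ (infinite products of probability measures exist for arbitrary index sets), and note that since $\sum_{i\in I}|f_i(x)|^2=K(x,x)<\infty$, only countably many terms are nonzero for each fixed $x$, so $W_x$ is still a well-defined $L^2$-limit and the covariance computation and the Gaussianity argument go through verbatim.
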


\begin{proof}
We refer to \cite{PaSc75} for the non-trivial direction. To stress
the idea, we include a proof of the easy part of the theorem: Assume
(\ref{eq:K1}). Let $\left\{ c_{i}\right\} _{i=1}^{n}\subset\mathbb{C}$
and $\left\{ x_{i}\right\} _{i=1}^{n}\subset X$, then we have
\[
\sum\nolimits _{i}\sum\nolimits _{j}\overline{c_{i}}c_{j}K\left(x_{i},x_{j}\right)=\mathbb{E}\left[\big|\sum c_{i}W_{x_{i}}\big|^{2}\right]\geq0,
\]
i.e., $K$ is p.d. 
\end{proof}
Let $\left(X,\mathscr{B},\nu\right)$ be a $\sigma$-finite measure
space, and let $\mathscr{B}_{fin}=\left\{ E\in\mathscr{B}:\nu\left(E\right)<\infty\right\} $.
Below we consider the following kernel $K$ on $\mathscr{B}_{fin}\times\mathscr{B}_{fin}$:
Set 
\begin{equation}
K\left(A,B\right)=\nu\left(A\cap B\right),\quad A,B\in\mathscr{B}_{fin}\label{eq:n4}
\end{equation}
and let $\mathscr{H}_{K^{\left(\nu\right)}}$ denote the associated
RKHS. 
\begin{prop}
~
\begin{enumerate}
\item $K=K^{\left(\nu\right)}$ in (\ref{eq:n4}) is positive definite.
\item $K^{\left(\nu\right)}$ is the covariance kernel for the stationary
Wiener process $W=W^{\left(\nu\right)}$ indexed by $\mathscr{B}_{fin}$,
i.e., Gaussian, mean zero, and 
\begin{equation}
\mathbb{E}\left(W_{A}W_{B}\right)=K^{\left(\nu\right)}\left(A,B\right)=\nu\left(A\cap B\right).\label{eq:n5}
\end{equation}
\item If $f\in L^{2}\left(\nu\right)$, and $W_{f}=\int_{X}f\left(x\right)dW_{x}$
denotes the corresponding Ito-integral, then 
\[
\mathbb{E}\left(\left|W_{f}\right|^{2}\right)=\int_{X}\left|f\right|^{2}d\nu;
\]
in particular, if $f=\sum_{i}\alpha_{i}\chi_{A_{i}}$, then 
\[
\sum\nolimits _{i}\sum\nolimits _{j}\alpha_{i}\alpha_{j}K^{\left(\nu\right)}\left(A_{i},A_{j}\right)=\int_{X}\left|\sum\nolimits _{i}\alpha_{i}\chi_{A_{i}}\right|^{2}d\nu.
\]
\item The RKHS $\mathscr{H}_{K^{\left(\nu\right)}}$ of the positive definite
kernel in (\ref{eq:n4}) consists of functions $F$ on $\mathscr{B}_{fin}$
represented by $f\in L^{2}\left(\nu\right)$ via 
\begin{equation}
F\left(A\right)=F_{f}\left(A\right)=\int_{A}fd\nu,\quad A\in\mathscr{B}_{fin};\label{eq:n5b}
\end{equation}
and 
\begin{equation}
\left\Vert F_{f}\right\Vert _{\mathscr{H}_{K^{\left(\nu\right)}}}^{2}=\left\Vert f\right\Vert _{L^{2}\left(\nu\right)}^{2}=\int_{X}\left|f\right|^{2}d\nu.\label{eq:n6}
\end{equation}
\item The map specified by 
\begin{equation}
\Psi\left(K^{\left(\nu\right)}\left(\cdot,A\right)\right)=\Psi\left(\nu\left(\left(\cdot\right)\cap A\right)\right)=\chi_{A},\quad\forall A\in\mathscr{B}_{fin}\label{eq:7}
\end{equation}
extends by linearity and by limits to an isometry 
\begin{equation}
\Psi:\mathscr{H}_{K^{\left(\nu\right)}}\longrightarrow L^{2}\left(\nu\right).\label{eq:q8}
\end{equation}
More generally if $F_{f}\in\mathscr{H}\left(K^{\left(\nu\right)}\right)$
is as in (\ref{eq:n5b}), then $\Psi\left(F_{f}\right)=f\in L^{2}\left(\nu\right)$. 
\end{enumerate}
\end{prop}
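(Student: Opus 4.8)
The plan is to derive all five assertions from the single factorization identity
\[
K^{\left(\nu\right)}\left(A,B\right)=\nu\left(A\cap B\right)=\int_{X}\chi_{A}\chi_{B}\,d\nu=\left\langle \chi_{A},\chi_{B}\right\rangle _{L^{2}\left(\nu\right)},\qquad A,B\in\mathscr{B}_{fin},
\]
which exhibits $A\longmapsto\chi_{A}\in L^{2}\left(\nu\right)$ as a feature map for $K^{\left(\nu\right)}$. Assertion (1) is then immediate: for finitely many $A_{i}\in\mathscr{B}_{fin}$ and scalars $c_{i}$,
\[
\sum\nolimits _{i}\sum\nolimits _{j}\overline{c_{i}}c_{j}K^{\left(\nu\right)}\left(A_{i},A_{j}\right)=\Big\Vert \sum\nolimits _{i}c_{i}\chi_{A_{i}}\Big\Vert _{L^{2}\left(\nu\right)}^{2}\geq0 .
\]

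For (2), apply Kolmogorov's theorem (\thmref{kol}) to the p.d. kernel $K^{\left(\nu\right)}$: it produces a mean-zero Gaussian system $\left\{ W_{A}\right\} _{A\in\mathscr{B}_{fin}}$ with $\mathbb{E}\left(W_{A}W_{B}\right)=\nu\left(A\cap B\right)$, which is (\ref{eq:n5}). To see that $W$ is the set-indexed stationary Wiener process, note that $A\cap B=\emptyset$ forces $\mathbb{E}\left(W_{A}W_{B}\right)=0$, so the jointly Gaussian variables $W_{A},W_{B}$ are independent, and expanding $\mathbb{E}\big(\left|W_{A\cup B}-W_{A}-W_{B}\right|^{2}\big)$ via the covariance kernel shows it vanishes, i.e. $W$ is finitely additive in $L^{2}\left(\Omega,\mathbb{P}\right)$ over disjoint sets — the defining increment properties. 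For (3), take first a simple $f=\sum_{i}\alpha_{i}\chi_{A_{i}}$ with $A_{i}\in\mathscr{B}_{fin}$ and set $W_{f}\coloneqq\sum_{i}\alpha_{i}W_{A_{i}}$; bilinearity and (\ref{eq:n5}) give
\[
\mathbb{E}\big(\left|W_{f}\right|^{2}\big)=\sum\nolimits _{i}\sum\nolimits _{j}\overline{\alpha_{i}}\alpha_{j}\,\nu\left(A_{i}\cap A_{j}\right)=\Big\Vert \sum\nolimits _{i}\alpha_{i}\chi_{A_{i}}\Big\Vert _{L^{2}\left(\nu\right)}^{2}=\int_{X}\left|f\right|^{2}d\nu ,
\]
which is the displayed quadratic identity. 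Since such simple functions are dense in $L^{2}\left(\nu\right)$, the isometric map $f\mapsto W_{f}$ extends by continuity from simple functions to all of $L^{2}\left(\nu\right)$ — this extension is the Ito integral $W_{f}=\int_{X}f\,dW_{x}$ — and $\mathbb{E}\big(\left|W_{f}\right|^{2}\big)=\left\Vert f\right\Vert _{L^{2}\left(\nu\right)}^{2}$ survives the limit.

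Assertions (4) and (5) are the two directions of one unitary. Viewed as a function on $\mathscr{B}_{fin}$ one has $K^{\left(\nu\right)}\left(\cdot,A\right)=\nu\left(\left(\cdot\right)\cap A\right)=F_{\chi_{A}}$, and for a simple $f=\sum_{i}c_{i}\chi_{A_{i}}$ (note $f\chi_{A}\in L^{1}\left(\nu\right)$ for $A\in\mathscr{B}_{fin}$ by Cauchy--Schwarz, so $F_{f}$ is well defined),
\[
F_{f}=\sum\nolimits _{i}c_{i}\,K^{\left(\nu\right)}\left(\cdot,A_{i}\right),\qquad\left\Vert F_{f}\right\Vert _{\mathscr{H}_{K^{\left(\nu\right)}}}^{2}=\sum\nolimits _{i}\sum\nolimits _{j}\overline{c_{i}}c_{j}\,\nu\left(A_{i}\cap A_{j}\right)=\left\Vert f\right\Vert _{L^{2}\left(\nu\right)}^{2} .
\]
Hence $f\mapsto F_{f}$ is isometric from the dense subspace of simple functions in $L^{2}\left(\nu\right)$ onto the dense subspace $span\left\{ K^{\left(\nu\right)}\left(\cdot,A\right)\right\} $ of $\mathscr{H}_{K^{\left(\nu\right)}}$, and so extends to a unitary $U:L^{2}\left(\nu\right)\to\mathscr{H}_{K^{\left(\nu\right)}}$. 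For general $f\in L^{2}\left(\nu\right)$ pick simple $f_{n}\to f$ in $L^{2}\left(\nu\right)$; then $Uf=\lim_{n}F_{f_{n}}$ in $\mathscr{H}_{K^{\left(\nu\right)}}$, and by the reproducing property (\ref{eq:a6}) together with $\chi_{A}\in L^{2}\left(\nu\right)$,
\[
\left(Uf\right)\left(A\right)=\big\langle K^{\left(\nu\right)}\left(\cdot,A\right),Uf\big\rangle _{\mathscr{H}_{K^{\left(\nu\right)}}}=\lim\nolimits _{n}F_{f_{n}}\left(A\right)=\lim\nolimits _{n}\int_{A}f_{n}\,d\nu=\int_{A}f\,d\nu ,
\]
so $Uf=F_{f}$, which proves (4) with $\left\Vert F_{f}\right\Vert _{\mathscr{H}_{K^{\left(\nu\right)}}}^{2}=\left\Vert f\right\Vert _{L^{2}\left(\nu\right)}^{2}$. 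Setting $\Psi\coloneqq U^{-1}$ gives an isometry (in fact a unitary) $\mathscr{H}_{K^{\left(\nu\right)}}\to L^{2}\left(\nu\right)$ with $\Psi\left(K^{\left(\nu\right)}\left(\cdot,A\right)\right)=\chi_{A}$ and $\Psi\left(F_{f}\right)=f$, which is (5).

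The only point that is not pure bilinear bookkeeping is the identification inside (4): that the abstract limit $\lim_{n}F_{f_{n}}$ in the Hilbert-space completion $\mathscr{H}_{K^{\left(\nu\right)}}$ genuinely equals, as a function on $\mathscr{B}_{fin}$, the concretely defined $A\mapsto\int_{A}f\,d\nu$. This is exactly where the reproducing property (\ref{eq:a6}) and the integrability $f\chi_{A}\in L^{1}\left(\nu\right)$ are used; everything else follows mechanically from the factorization identity at the top.
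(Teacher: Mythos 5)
Your proof is correct. Note, however, that the paper does not actually prove this proposition at all: its ``proof'' is a one-line pointer to the literature. So your argument is not so much a different route as the missing route, and it is the standard one that those references take: the factorization $K^{(\nu)}(A,B)=\langle\chi_{A},\chi_{B}\rangle_{L^{2}(\nu)}$ gives (1) at once; Kolmogorov's theorem (the paper's Theorem 7.3) gives (2); the It\^{o}-isometry extension from simple functions gives (3); and the unitary $f\mapsto F_{f}$ between $L^{2}(\nu)$ and $\mathscr{H}_{K^{(\nu)}}$, identified pointwise through the reproducing property, gives (4) and (5). You were also right to flag the one non-mechanical step: the identification of the abstract Hilbert-space limit $\lim_{n}F_{f_{n}}$ with the concrete set function $A\mapsto\int_{A}f\,d\nu$, which needs the reproducing property plus $\chi_{A}\in L^{2}(\nu)$ (so that $F_{f_{n}}(A)=\langle\chi_{A},f_{n}\rangle_{L^{2}(\nu)}\to\langle\chi_{A},f\rangle_{L^{2}(\nu)}$); this is exactly the point glossed over when one merely cites the literature, and it also yields surjectivity of $f\mapsto F_{f}$, i.e.\ that $\mathscr{H}_{K^{(\nu)}}$ consists precisely of the functions $F_{f}$. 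Two trivial remarks: density of $\mathscr{B}_{fin}$-simple functions in $L^{2}(\nu)$ uses $\sigma$-finiteness of $\nu$, which you implicitly invoke and could state; and in (2) your verification of finite additivity over disjoint sets is a nice addition justifying the name ``Wiener process,'' though it is not strictly demanded by the displayed covariance identity.
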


\begin{proof}
The details can be found at various places in the literature; see
e.g., \cite{MR2966130,MR3800275,MR3888850,MR4020693}. 
\end{proof}
\begin{prop}
Let $\mathscr{B}_{fin}\times\mathscr{B}_{fin}\xrightarrow{\;K\;}\mathbb{R}$
be the p.d. kernel as in (\ref{eq:n4}), and $\mathscr{H}_{K^{\left(\nu\right)}}$
the RKHS of $K$. Suppose $\left\{ Q_{n}\right\} _{\mathbb{N}_{0}}$
is an effective system in $\mathscr{H}_{K^{\left(\nu\right)}}$, and
let $\Psi$ be the isometry specified in (\ref{eq:7})--(\ref{eq:q8}).
Then 
\[
\left\{ \Psi Q_{n}\right\} _{n\in\mathbb{N}_{0}}
\]
is effective in the closed subspace 
\[
\Psi\left(\mathscr{H}_{K^{\left(\nu\right)}}\right)\subset L^{2}\left(\Omega,\mathbb{P}\right).
\]
\end{prop}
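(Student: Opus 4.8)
The plan is to exploit that ``effectiveness'' is a unitary invariant and is therefore transported verbatim by $\Psi$ once we restrict attention to its range. First I would settle the bookkeeping around $\Psi$: by items (4)--(5) of the preceding Proposition, $\Psi$ of (\ref{eq:7})--(\ref{eq:q8}) is an isometry of $\mathscr{H}_{K^{(\nu)}}$ into $L^{2}(\nu)$, and if one wants the codomain to be $L^{2}(\Omega,\mathbb{P})$ one further composes with the Wiener isometry $L^{2}(\nu)\ni\chi_{A}\mapsto W_{A}\in L^{2}(\Omega,\mathbb{P})$. In either case, because $\mathscr{H}_{K^{(\nu)}}$ is complete, $\mathscr{K}\coloneqq\Psi\big(\mathscr{H}_{K^{(\nu)}}\big)$ is a \emph{closed} subspace and $\Psi:\mathscr{H}_{K^{(\nu)}}\to\mathscr{K}$ is a unitary isomorphism; the symbol ``$\Psi Q_{n}$'' is then read as the operator $\widetilde{Q}_{n}\coloneqq\Psi Q_{n}\Psi^{-1}$ acting on $\mathscr{K}$. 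Since conjugation by the unitary $\Psi$ is a $*$-isomorphism $B(\mathscr{H}_{K^{(\nu)}})\to B(\mathscr{K})$, if the $Q_{n}$ are built from a projection system $\{P_{n}\}$ as in \thmref{pc3}, the $\widetilde{P}_{n}\coloneqq\Psi P_{n}\Psi^{-1}$ are again orthogonal projections on $\mathscr{K}$ and $\widetilde{T}_{n}=\Psi T_{n}\Psi^{-1}$.

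Next I would unwind the hypothesis. By \thmref{pc3} (and \defref{efp}), ``$\{Q_{n}\}$ effective in $\mathscr{H}_{K^{(\nu)}}$'' is the statement
\[
\|x\|_{\mathscr{H}_{K^{(\nu)}}}^{2}=\sum_{n\in\mathbb{N}_{0}}\|Q_{n}x\|_{\mathscr{H}_{K^{(\nu)}}}^{2}\qquad\text{for all }x\in\mathscr{H}_{K^{(\nu)}},
\]
equivalently $T_{n}\xrightarrow{\;s\;}0$. Now fix $\xi\in\mathscr{K}$ and set $x\coloneqq\Psi^{-1}\xi$. Isometry of $\Psi$ gives $\|\widetilde{Q}_{n}\xi\|=\|\Psi Q_{n}x\|=\|Q_{n}x\|$ for every $n$, hence
\[
\sum_{n\in\mathbb{N}_{0}}\|\widetilde{Q}_{n}\xi\|^{2}=\sum_{n\in\mathbb{N}_{0}}\|Q_{n}x\|^{2}=\|x\|^{2}=\|\Psi x\|^{2}=\|\xi\|^{2},
\]
which is exactly the generalized Parseval identity for $\{\widetilde{Q}_{n}\}$ on $\mathscr{K}$; equivalently $\widetilde{T}_{n}\xrightarrow{\;s\;}0$ on $\mathscr{K}$, since $\|\widetilde{T}_{n}\xi\|=\|T_{n}x\|\to0$. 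This is the assertion.

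I do not anticipate a genuine obstacle: the content is the one-line fact that a unitary conjugation preserves $\sum_{n}\|Q_{n}x\|^{2}=\|x\|^{2}$. The only two points that need care are (i) that $\Psi$ has range a closed subspace $\mathscr{K}$ of $L^{2}(\nu)$ (resp. of $L^{2}(\Omega,\mathbb{P})$ after the Wiener embedding) rather than the whole space, so one conjugates by $\Psi$ rather than merely pre-composing, the closedness of $\mathscr{K}$ coming from completeness of $\mathscr{H}_{K^{(\nu)}}$; and (ii) the passage between $L^{2}(\nu)$ and $L^{2}(\Omega,\mathbb{P})$, handled by the Wiener isometry as above. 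Both are routine.
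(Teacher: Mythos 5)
Your proposal is correct and is essentially the paper's own argument: the paper simply transfers the generalized Parseval identity through the isometry, writing $\mathbb{E}\left[\Psi\left(F\right)\Psi\left(G\right)\right]=\left\langle F,G\right\rangle _{\mathscr{H}_{K^{\left(\nu\right)}}}=\sum_{n}\left\langle Q_{n}F,Q_{n}G\right\rangle =\sum_{n}\mathbb{E}\left[\Psi\left(Q_{n}F\right)\Psi\left(Q_{n}G\right)\right]$, which is the polarized form of your norm identity. Your added care in reading $\Psi Q_{n}$ as $\Psi Q_{n}\Psi^{-1}$ on the closed range and in noting the Wiener embedding of $L^{2}\left(\nu\right)$ into $L^{2}\left(\Omega,\mathbb{P}\right)$ only makes explicit what the paper leaves implicit.
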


\begin{proof}
For all $F,G\in\mathscr{H}_{K^{\left(\nu\right)}}$, we have 
\begin{align*}
\mathbb{E}\left[\Psi\left(F\right)\Psi\left(G\right)\right] & =\left\langle F,G\right\rangle _{\mathscr{H}_{K^{\left(\nu\right)}}}\\
 & =\sum_{n\in\mathbb{N}_{0}}\left\langle Q_{n}F,Q_{n}G\right\rangle _{\mathscr{H}_{K^{\left(\nu\right)}}}\\
 & =\sum_{n\in\mathbb{N}_{0}}\mathbb{E}\left[\Psi\left(Q_{n}F\right)\Psi\left(Q_{n}G\right)\right].
\end{align*}
\end{proof}
\bibliographystyle{amsalpha}
\bibliography{ref}

\end{document}